\tikzset{                %
  symbol/.style={        %
    draw=none,
    every to/.append style={ 
      edge node={node [sloped, allow upside down, auto=false]{$#1$}}}
  }
}
\crefname{subsection}{subsection}  {subsections} %
\crefname{prop}      {proposition} {propositions}
\crefname{con}       {construction}{constructions}
\crefname{rem}       {remark}      {remarks}
\crefname{ex}        {example}     {examples}
\crefname{cor}       {corollary}   {corollary}
\theoremstyle{definition}
\declaretheorem[name=Theorem, parent =section]{thm}
\declaretheorem[name=Lemma,   sibling=thm    ]{lem}
\newtheorem{prop} [thm]{Proposition}
\newtheorem{defin}[thm]{Definition}
\newtheorem{rem}  [thm]{Remark}
\newtheorem{con}  [thm]{Construction}
\newtheorem{cor}  [thm]{Corollary}
\newcommand{\R}    {\mathbb{R}}  %
\newcommand{\F}    {\mathbb{F}}  %
\newcommand{\Z}    {\mathbb{Z}}  %
\newcommand{\Q}    {\mathbb{Q}}  %
\newcommand{\Ll}   {\mathcal{T}} %
\let\P\relax
\newcommand{\P}    {\mathbb{P}}  %
\let\phi\relax
\newcommand{\phi}  {\varphi}     %
\let\kappa\relax
\newcommand{\kappa}{\varkappa}   %
\let\d\relax
\newcommand{\d}    {\partial}    %
\newcommand{\B}    {B}           %
\newcommand{\C}    {\mathcal{C}} %
\let\H\relax
\newcommand{\H}    {\mathsf{H}}  %
\newcommand{\Zy}   {Z}           %
\newcommand{\eps}  {\varepsilon} %
\newcommand{\M}    {\mathcal{M}} %
\newcommand{\X}    {\mathsf{X}}  %
\newcommand{\nC}   {\mathsf{C}}  %
\let\O\relax
\newcommand{\O}    {\mathsf{O}}  %
\DeclareMathOperator{\im}{Im}
\let\ker\relax
\DeclareMathOperator{\ker}{Ker}
\DeclareMathOperator{\coker}{Coker}
\DeclareMathOperator{\rk}{rk}
\DeclareMathOperator{\Ann}{Ann}
\DeclareMathOperator{\Uni}{Uni}
\DeclareMathOperator{\Cr}{Cr}
\DeclareMathOperator{\ch}{char}
\DeclareMathOperator{\Tors}{Tors}
\newcommand{\id}{\mbox{id}}           %
\newcommand{\Ht}{\mathrm{ht}}         %
\newcommand{\cf}{\mathrm{cf}}         %
\newcommand{\choicemantra}{NB: this construction involves some choices.}
\newcommand{\Csing}{\C_{\text{sing}}} %
\newcommand{\dM}{\d M}                %
\let\leq\relax
\newcommand{\leq}{\leqslant}            %
\let\geq\relax
\newcommand{\geq}{\geqslant}            %
\newcommand{\xto}{\xrightarrow}         %
\newcommand{\into}{\hookrightarrow}     %
\newcommand{\onto}{\twoheadrightarrow}  %
\let\emptyset\relax
\newcommand{\emptyset}{\varnothing}
\newcommand{\<}{\langle}                %
\newcommand{\>}{\rangle}                %
\newcommand{\triright}{\noindent $\vartriangleright$ } %
\newcommand{\trileft }{\hfill    $\vartriangleleft$  } %
\newcommand{\she}{\stackrel{s}{\simeq}} %
\let\tilde\relax
\newcommand{\tilde}{\widetilde}         %
\newcommand{\congrot}{\rotatebox[origin=c]{-20}{$\cong$}} %
\newcommand{\subs}[1]
   {\refstepcounter{subsection}
   \medskip\noindent
   {\it\arabic{section}.\arabic{subsection}.\hspace{.25em}\ignorespaces#1.}
   \ignorespaces}
\title{Enhanced Bruhat decomposition and Morse theory}
\author{Petr Pushkar}
\address{\noindent Department of Mathematics \newline National Research University Higher School
  of Economics \newline Moscow,  Russian Federation}
\email{petya.pushkar@gmail.com}
\author{Misha Tyomkin}
\address{Department of Mathematics \& International Laboratory of Algebraic
  Topology and Its Applications \newline
  National Research University Higher School
  of Economics \newline Moscow, Russian Federation}
\email{mstyomkin@edu.hse.ru}
\date{}
\begin{document}

\begin{abstract}
  Morse function is called strong if all its critical values are pairwise
  distinct. Given such a function $f$ and a field $\F$ Barannikov constructed a
  pairing of some of the critical points \mbox{of $f$}, which is now also known
  as barcode. With every Barannikov pair we naturally associate (up to sign) an
  element of $\F \setminus \{0\}$; we call it Bruhat number. The paper is
  devoted to the study of these Bruhat numbers. We investigate several
  situations where the product of all these numbers (some being raised to the
  power $-1$) is independent of $f$ and interpret it as a Reidemeister torsion.
  We apply our results in the setting of one-parameter Morse theory by proving
  that generic path of functions must satisfy a certain equation mod 2 (this was
  initially proven in \cite{Akhm} under additional assumptions).

  On the linear-algebraic level our constructions are served by the following
  variation of a classical Bruhat decomposition for $GL(\F)$. A unitriangular
  matrix is an upper triangular one with 1's on the diagonal. Consider all
  rectangular matrices over $\F$ up to left and right multiplication by
  unitriangular ones. Enhanced Bruhat decomposition describes canonical
  representative in each equivalence class.
\end{abstract}

\maketitle

\section*{Introduction}

\subs{The context} In this paper we study a certain invariant of a strong Morse
function on a smooth manifold (which is supposed to be closed most of the time).
Recall that a function is called Morse if all its critical points are
non-degenerate. The function is called strong if all its critical values are
pairwise distinct. Morse theory is a classical branch of differential topology:
one can extract a lot of topological information about the manifold in terms of
a Morse function. On the other hand, Morse functions arise naturally in various
situations and their properties are interesting in their own right.

Recall that each critical point of a Morse function carries a number~--- its
index. The very first theorem of Morse theory states that one can find a
CW-complex homotopy equivalent to the manifold, which $k$-cells correspond to
critical points of index $k$.

Suppose now that one is given not only a strong Morse function $f$ on a manifold
$M$, but also a field $\F$. To such a data Barannikov \cite{Bar} associated a
powerful combinatorial structure on the set of critical points of $f$. Nowadays
it is also known as barcode and serves as a well-established tool in applied and
symplectic topology. This structure is a pairing of some critical points of
neighboring indices. This pairing does depend on the field $\F$. For example,
the number of unpaired critical points of index $k$ equals to
$\dim_\F \H_k(M; \F)$. Moreover, this pairing relies crucially on the fact the
function is strong, i.e. critical points are linearly ordered. If one starts to
change a Morse function so that along the way it fails to be strong (i.e. two
critical values collide), the pairing changes. We call these pairs Barannikov
pairs. We will sketch their definition in the next subsection.

\subs{Bruhat numbers of a single function}
\label{subs:morse}
To state our results consicely we prefer to speak about oriented strong Morse
functions. Roughly, the Morse function $f$ is called oriented if one has chosen
orientation on all the cells in the CW-complex constructed from it. This
condition is both technical and minor: one can always orient a Morse function.
As usual in topology, this choice only alters certain signs.

Our first result is not a theorem, but rather a construction. Namely with any
given oriented Morse function $f$ on a manifold $M$ and a field $\F$ we
naturally associate a non-zero number with each Barannikov pair. Here by number
we mean an element of $\F^* = \F \setminus \{0\}$. We call these numbers
``Bruhat numbers'' of $f$ over $\F$. The reason is that their construction is
closely related to the classical Bruhat decompoistion for $GL(\F)$. This paper
is devoted to the study of these numbers from different perspectives. The first
thing to mention is that changing the orientation of $f$ may only change signs
of some Bruhat numbers.

We will now sketch the contruction of both Barannikov pairs and Bruhat numbers.
Recall that if one chooses a generic Riemannian metric $g$ on $M$ then they can
consider a Morse complex which counts homology of $M$ (this is a classical
construction, it has nothing to do with a field). It is a chain complex of free
abelian groups, formally spanned by critical points (points of index $k$ are of
degree $k$). Thus the Morse differential is nothing but an integer matrix:
differential of a critical point $x$ is a linear combination of points of
smaller index. The coefficient of the point $y$ in this linear combination is
the number of antigradient flowlines from $x$ to $y$, counted with appropriate
signs (thus it is non-zero only if $f(x) > f(y)$). Since the function is strong,
the set of critical points is ordered by increasing of critical values. Next, we
note that choosing a different metric $g'$ results in a different matrix of
Morse differential. More precisely, these two matrices differ by a conjugation
by unitriangular (i.e. triangular with 1's on the diagonal) one. We treat this
unitriangular matrix as that of a change of basis of a complex. As a
recollection, the class of a matrix of Morse differential up to conjugation by a
unitriangular matrix is a well-defined invariant of $f$ (i.e. doesn't depend on
a metric). The corresponding classification problem is, however, very hard, so
we consider the coefficients in $\F$. In such a case we prove that every complex
is isomorphic, up to unitriangular change of basis, to the direct sum of
$0 \to \F \xto{\times \lambda} \F \to 0$ and $0 \to \F \to 0$. Generators of the
former (which are themselves critical points) are Barannikov pairs. The
corresponding number $\lambda$ is a Bruhat number on a pair.

Weak Morse inequalities state that the number of critical points of index $k$ is
greater or equal \mbox{to $\dim \H_k(M; \Q)$}. Let $\# \Cr(f)$ be the overall
number of critical points of $f$. It is easy to show that if
$\# \Cr(f) = \sum_k \dim \H_k(M; \Q)$ then the Morse differential (w.r.t. any
metric) must be identically zero. The next statement is applicable when this is
not the case.

\begin{restatable}{cor}{CorShortMatrRestate}
  \label{cor:short_matr}   
  Let $f$ be an oriented strong Morse function on $M$. Suppose \mbox{that
    $\# \Cr(f) > \sum_k \dim \H_k(M; \Q)$.} Then one can find two critical
  points $x$ and $y$ of neighboring indices s.t. the number of antigradient
  flowlines from $x$ to $y$, counted with appropriate signs, is the same for any
  Riemannian metric. This number is non-zero and equals to some Bruhat number of
  $f$ over $\Q$.
\end{restatable}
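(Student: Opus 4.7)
The approach is to identify a specific entry of the Morse differential matrix that is automatically preserved by the unitriangular change of basis induced by a change of metric, and then to check that this entry coincides with the first Bruhat number produced by the enhanced Bruhat decomposition of that differential.

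First I would rule out the degenerate case: if every differential $\d_k$ vanished identically then $\dim \H_k(M;\Q)$ would equal the number of index-$k$ critical points for every $k$, contradicting the strict inequality $\#\Cr(f) > \sum_k \dim \H_k(M;\Q)$. Fix then an index $k$ for which the matrix $A$ of $\d_k$, with rows indexed by the index-$(k-1)$ critical points and columns by the index-$k$ critical points (each ordered by increasing critical value), is nonzero. Let $x$ be the index-$k$ critical point of smallest critical value whose column in $A$ is nonzero, and let $y$ be the index-$(k-1)$ critical point of largest critical value appearing with nonzero coefficient in $\d_k x$; call this coefficient $\lambda \coloneqq A_{y,x}$.

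The heart of the argument is the invariance of $\lambda$. A change of metric acts on the full Morse differential matrix by conjugation with a graded unitriangular matrix (Subsection~\ref{subs:morse}); restricting to the $(k-1,k)$-block, this means that $A$ is replaced by $VAU$ for independently chosen unitriangular $V$ and $U$ acting on the ordered bases of index-$(k-1)$ and index-$k$ critical points. The verification is then a brief matrix computation: right multiplication by $U$ alters column $x$ only by adding combinations of strictly earlier columns of $A$, but those are zero by the choice of $x$; left multiplication by $V$ alters row $y$ only by adding combinations of strictly later rows, whose entries in column $x$ vanish by the choice of $y$. Hence $A_{y,x}$ is untouched, so the signed count of antigradient flowlines from $x$ to $y$ equals $\lambda$ for every metric, and $\lambda$ is nonzero by construction.

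It remains to recognise $\lambda$ as a Bruhat number of $f$ over $\Q$. Running the enhanced Bruhat decomposition on $A$, the very first pivot must lie at $(y,x)$, since by construction that position is the bottom-most nonzero entry of the leftmost nonzero column; the subsequent reduction steps only modify rows strictly above $y$ or columns strictly to the right of $x$, so they do not disturb $A_{y,x}$. Therefore $\lambda$ is the Bruhat number attached to the Barannikov pair $(x,y)$, completing the proof. The only real obstacle I anticipate is bookkeeping the conventions (left/right, upper/lower, earlier/later) consistently so that the two invariance claims line up; the essential argument reduces to a three-line matrix check.
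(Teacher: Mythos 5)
Your proof is correct, and it runs on the same engine as the paper's: the only input is that a change of metric acts on the Morse differential by unitriangular conjugation, so any entry sitting at an ``uncovered'' position of the rook matrix is forced. The difference is one of packaging. The paper's proof is two lines: since $\# \Cr(f) > \sum_k \dim \H_k(M;\Q)$ there is at least one Barannikov pair over $\Q$; take any \emph{short} one and invoke \Cref{cor:short_matr_comp}, which in turn rests on the general statement (\Cref{prop:matr_spaces}, via the sets $\Ll(R)$ of \Cref{subs:matr_spaces}) that every matrix in the unitriangular orbit agrees with the rook matrix at all uncovered positions. You instead bypass that machinery and exhibit one concrete uncovered position~--- the bottom-most nonzero entry of the leftmost nonzero column of some nonzero $\d_k$~--- and verify invariance by the direct computation $(VAU)_{y,x}=A_{y,x}$; your identification of that entry with $R_{y,x}$ via the first nontrivial step of \Cref{con:rook} is also correct, and your chosen pair is automatically a short pair in the paper's sense. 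What the paper's route buys is the stronger (implicit) conclusion that \emph{every} short pair yields such a forced entry, plus integrality of the corresponding Bruhat numbers (\Cref{cor:short_comp}); what your route buys is self-containedness~--- a reader needs only the unitriangular-conjugation fact and a three-line matrix check. Your worry about conventions is the right one to have, but as written the two claims (columns of $A$ get earlier columns added, rows get later rows added) do line up with the action $X \mapsto P_{k-1}^{-1} X P_k$, so there is no gap.
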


In \Cref{subs:morse_comp} this statement is derived from \Cref{thm:matr} which
says that the matrix of a Morse differential of $f$ w.r.t. to any metric must
obey certain restrictions. These restrictions, in turn, are expressed in terms
of Bruhat numbers and Barannikov pairs. They are in the spirit of Bruhat
decomposition, see the mentioned subsection for the precise statement and
example. Note that in particular we claim that at least one Bruhat number over
$\Q$ must be integer. For a general Bruhat number this is false, however.

\begin{comment}
We call these pairs
and numbers together ``B-data'' for brevity. Given a B-data and a number
$k \in \{0, \dots, \dim M - 1 \}$ one can consider the subset of the set of
matrices, which we define here by pictorial example (formal definition is given
in \Cref{subs:matr_spaces}). First, one restricts to the set of pairs of points
of indices $k$ 
\end{comment}

\subs{Interplay with the theory of torsions}
\label{subs:br_tors}
In \Cref{prop:realiz} we prove that if $\F$ is either $\Q$ or $\F_p$ then one
can find a Morse function which has any prescribed number $\lambda \in \F^*$ as
one of its Bruhat numbers; the manifold $M$ may be any with $\dim M \geq 4$.
Thus one has no control over individual Bruhat number~--- it may turn out to be
any number. We propose, however, to consider the alternating product of all the
Bruhat numbers. The word ``alternating'' here means that each Bruhat number is
raised to the power $\pm 1$ depending on the parity of indices of critical
points involved in the corresponding Barannikov pair. The term is used in
analogy of Euler characteristic, which the alternating sum of, say, cells in the
CW-complex. In the following statement this product is considered up to sign.

\begin{restatable}{thm}{ThmNoHomRestate}
  \label{thm:no_hom}
  Let $f$ be a strong Morse function on $M$ and $\F$ be a field. Suppose that
  $\H_k(M) = 0$ for all $0 < k < \dim M$. Then the alternating product of all
  Bruhat numbers (as an element from $\sfrac{\F^*}{\pm 1}$) is independent of
  $f$.
\end{restatable}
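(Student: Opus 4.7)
\emph{Approach.} The plan is to interpret the alternating product of Bruhat numbers as a Reidemeister-type torsion of the Morse complex, and then to show that this torsion is independent of $f$ either by invoking its topological invariance or by a direct bifurcation argument along a generic one-parameter family.

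\emph{From Bruhat numbers to torsion.} Given $f$ and a Riemannian metric, the Morse complex $\C_*$ has the critical points as a distinguished basis and (by the construction recalled just before the theorem) admits a decomposition, unique up to unitriangular change of basis, into acyclic two-term pieces $\F \xto{\lambda_i} \F$, one per Barannikov pair, together with one-dimensional trivial pieces $\F$ in bijection with the unpaired critical points. Under the hypothesis $\H_k(M) = 0$ for $0 < k < \dim M$, the trivial pieces all sit in degrees $0$ and $\dim M$. Fix, once and for all, bases of $\H_0(M; \F)$ and $\H_{\dim M}(M; \F)$. A direct calculation from the normal form shows that the Reidemeister torsion of $\C_*$ relative to the critical-point basis and this chosen homology basis equals, up to sign, the alternating product $\prod \lambda_i^{\pm 1}$. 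Since a change of metric acts by a unitriangular change of basis, and unitriangular matrices have determinant $1$, this torsion depends only on $f$.

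\emph{Independence of $f$.} To upgrade from metric-independence to $f$-independence, the plan is to connect any two strong Morse functions $f_0, f_1$ by a generic one-parameter family $f_t$. Only two kinds of codimension-one bifurcations appear: (a) swaps of two adjacent critical values, and (b) birth--death creation or annihilation of a pair of cancelling critical points. Case (b) is essentially harmless: the cancelling pair is joined by a single antigradient flowline, so its Bruhat number is $\pm 1$ and leaves the alternating product unchanged in $\sfrac{\F^*}{\pm 1}$. For (a) the analysis is combinatorial: when two critical values cross one must track how the Barannikov normal form rearranges, and check that in each case the changes cancel in the signed product.

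\emph{Main obstacle.} The substantive work lies in the swap analysis (a): individual Bruhat numbers genuinely shift when critical values cross, and one must verify that these shifts always assemble into a multiplicative rearrangement that is trivial in $\sfrac{\F^*}{\pm 1}$. The hypothesis $\H_k(M) = 0$ for $0 < k < \dim M$ is essential here, since it forces every unpaired critical point into the extremal degrees and so prevents a stray trivial summand in an intermediate degree from spoiling the bookkeeping. A cleaner alternative, which I would also consider, is to identify the alternating product directly with the classical constant-coefficient Reidemeister torsion of $(M, \F)$ and invoke its well-known topological invariance; this would deliver the conclusion immediately at the cost of importing machinery from the torsion literature.
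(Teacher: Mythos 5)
Your overall strategy is the same as the paper's: interpret the alternating product as a Milnor-type torsion of the enhanced Morse complex and then show invariance by tracking bifurcations along a generic path connecting two functions. However, the proposal has a genuine gap exactly where the paper does its real work: you flag the Maxwell-event (critical-value swap) analysis as the ``main obstacle'' and do not carry it out. This is not a routine verification. The paper's proof rests on two separate ingredients here. First, a case analysis of the self-intersection bifurcation (\Cref{prop:self_int}) showing that when \emph{both} points involved are paired, the alternating product changes at most by sign; the paper deduces this not pair-by-pair but by observing that $\tau$ is a torsion of a based complex whose basis changes by a transposition, so $\tau \mapsto -\tau$. Second, and crucially, when one of the two colliding points is \emph{homological} the pairing itself can change, and the product of Bruhat numbers is not automatically preserved. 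The hypothesis $\H_k(M)=0$ for $0<k<\dim M$ forces such a point into degree $0$ or $\dim M$, and the paper then invokes \Cref{prop:pair_ext}: every Barannikov pair of degree $0$ (and, for orientable $M$, of degree $\dim M - 1$) carries Bruhat number $\pm 1$, so any pair created or destroyed in such a collision contributes trivially in $\sfrac{\F^*}{\pm 1}$ (with a separate remark disposing of the non-orientable top-degree case via $\ch \F = 2$). Your proposal never identifies this mechanism; saying the hypothesis ``prevents a stray trivial summand from spoiling the bookkeeping'' does not explain why a swap involving an unpaired extremal-degree point is harmless.

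A second, smaller issue: your torsion identification fixes bases of $\H_0$ and $\H_{\dim M}$ ``once and for all'' and asserts the torsion in those bases equals the alternating product. The identity proved in the paper (\Cref{prop:tors_enh}) holds for the homology basis \emph{induced by $f$} via the enhancement on $\H_\bullet(M)$; for an arbitrary fixed basis the two quantities differ by the determinant of the change of basis, which a priori depends on $f$. The same subtlety undermines the ``cleaner alternative'' of citing topological invariance of Reidemeister torsion: the complex is not acyclic, so one must use torsion relative to a homology basis, and invariance only holds once the bases coming from different functions are shown to be equivalent up to sign — which again requires an argument of the \Cref{prop:pair_ext} type. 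So the shortcut does not avoid the work; it relocates it.
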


This is discussed in \Cref{subs:tors,subs:no_hom}; in particular we interpret
this alternating product as a certain kind torsion. For example, if $M = \R\P^n$
and $\F = \Q$ then this product equals to $\pm 2^{[n/2]}$, where brackets denote
the integral part.

We shall now quickly recall the notion of Reidemeister torsion of a manifold.
This is a purely algebro-topological invariant which itself has nothing to do
with Morse theory. Let $\Z[\pi]$ denote the integral group ring of a fundamental
group $\pi = \pi_1(M)$. Suppose one is given a map of rings
$\rho : \Z[\pi] \to \F$ for some field $\F$. Then they may considered homology
of $M$ with coefficients twisted by $\rho$. If it vanishes, then they may
furthermore define the Reidemeister torsion of $M$ w.r.t. $\rho$, which is an
element from the quotient group $\sfrac{\F^*}{\rho(\pm \pi)}$. This invariant is
useful for distinguishing homotopy equivalent but non-homeomorphic manifolds,
such as lens spaces.

We will now pour Morse theory into this setting. Suppose that now we are given
not only oriented strong Morse function $f$ and a field $\F$ but also a map
$\rho : \Z[\pi] \to \F$. In this case we construct twisted Barannikov pairs and
Bruhat numbers. In general, the alternating product of twisted Bruhat numbers
may well depend on $\F$. The interesting case, however, is when one is able to
define the Reidemeister torsion, i.e. when twisted homology vanishes.

\begin{restatable}{thm}{ThmRTRestate}
  \label{thm:rt}
  Let $f$ be an oriented strong Morse function on a manifold $M$, $\F$ be a
  field and $\rho \colon \Z[\pi] \to \F$ be a homomorphism of rings. Suppose
  that twisted homology vanishes. Then the alternating product of twisted Bruhat
  numbers of $f$ equals to the Reidemeister torsion of $M$. In particular, it is
  independent of $f$.
\end{restatable}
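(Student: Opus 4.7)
The plan is to interpret both sides of the claimed equality as torsions of the same based acyclic chain complex, and then to compute that common torsion directly from the Barannikov normal form.

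First I would set up the twisted Morse complex $C_*^\rho(f,g)$ associated to the oriented Morse function $f$, a generic Riemannian metric $g$, and the homomorphism $\rho\colon\Z[\pi]\to\F$. This is a finite-dimensional chain complex of $\F$-vector spaces with a distinguished basis in each degree indexed by the critical points of $f$ of the corresponding index; the orientation of $f$ together with a choice of lifts of critical points to the universal cover $\tilde M$ pins down this basis up to elements of $\rho(\pm\pi)$. By hypothesis $\H_*(C_*^\rho)=0$, so the Whitehead torsion $\tau(C_*^\rho)\in\F^*/\rho(\pm\pi)$ is well defined.

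The next step is the classical identification $\tau(C_*^\rho(f,g))=\tau(M,\rho)$. This is Milnor's Morse-theoretic computation of Reidemeister torsion: the universal cover $\tilde M$ acquires a $\pi$-equivariant CW-structure whose cells are the unstable manifolds of the lifted gradient, and the associated cellular chain complex, after tensoring through $\rho$, is exactly $C_*^\rho(f,g)$; Chapman's topological invariance (or a direct comparison via a Morse-theoretic subdivision argument) then yields the equality. Granting this identification, it suffices to prove $\tau(C_*^\rho(f,g))$ equals the alternating product of twisted Bruhat numbers.

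Now I would invoke the twisted version of the enhanced Bruhat decomposition established earlier in the paper. It provides a unitriangular change of basis after which $C_*^\rho$ splits as an orthogonal direct sum of two-term subcomplexes $0\to\F\xto{\times\lambda_i}\F\to 0$, one for each twisted Barannikov pair $(x_i,y_i)$ with Bruhat number $\lambda_i$, plus one-term summands $0\to\F\to 0$ for the unpaired critical points. Acyclicity forces all one-term summands to be absent, leaving only the two-term pieces. Because unitriangular matrices have determinant~$1$, this change of basis preserves $\tau$ on the nose. Torsion is multiplicative over direct sums of based acyclic complexes, so it remains to compute $\tau$ of a single piece $0\to\F\xto{\times\lambda}\F\to 0$ concentrated in degrees $k$ and $k-1$; a one-line calculation gives $\lambda^{(-1)^{k}}$, and multiplying these contributions together reproduces exactly the alternating product in the statement.

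The main obstacle I anticipate is sign bookkeeping, since three distinct sources of signs must be reconciled: the signs built into the orientation of $f$ and the lifts of critical points to $\tilde M$; the signs appearing in Milnor's identification $\tau(C_*^\rho)=\tau(M,\rho)$; and the $\pm 1$ exponents inside the alternating product. The quotient by $\rho(\pm\pi)$ absorbs the first two, and the unitriangular normalization survives the passage to this quotient precisely because unitriangular matrices have determinant $1$ in $\F^*$. Making the conventions of \Cref{subs:morse} compatible with a chosen normalization of Whitehead torsion is where the care is needed; once this is done, both sides live unambiguously in $\F^*/\rho(\pm\pi)$ and the equality reduces to the direct-sum computation above.
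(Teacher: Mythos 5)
Your proposal is correct and follows essentially the same route as the paper: build the twisted based complex from $f$ (the paper uses the equivariant cellular complex of the CW-model lifted to the universal cover, which it identifies with the Morse complex), identify its torsion with the Reidemeister torsion via simple homotopy invariance, and compute that torsion in a Barannikov basis, where the unitriangular normalization and the quotient by $\rho(\pm\pi)$ dispose of the basis ambiguities and signs exactly as you describe. Your direct-sum computation of $\lambda^{(-1)^k}$ per pair is precisely the content of the paper's \Cref{prop:tors_enh}.
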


\subs{One-parameter Morse theory}
\label{subs:one_par}
Let us now consider not a single strong Morse function but a generic path in the
space of all functions on $M$. All but finitely points on this path are
themselves strong Morse functions. However, after passing a function which fails
to be strong and Morse Barannikov pairs and Bruhat numbers change. Thanks to the
genericity assumption on the path it is possible to describe exactly the list of
possible changes. For Barannikov pairs this was done already in \cite{Bar} (see
also \cite{CSEM} and pictures in the survey \cite{EH}). We do the same for
Bruhat numbers on them. In particular, this gives a proof of \Cref{thm:no_hom}.
Moreover, it enables us to prove the theorem of Akhmetev-Cencelj-Repovs
\cite{Akhm} in greater generility, which we shall now describe.

Let $\{ f_t \}$ be a generic path on functions on $M$, i.e. $f_t$ is a function
from $M$ to $\R$ for each $t \in [-1,1]$. To such a path one associates a Cerf
diagram. It is a subset of $[-1, 1] \times \R$ consisting of points $(t, x)$
s.t. $x$ is a critical value of $f_t$. Practically, it is a set of plane arcs
which start and end either at cusps or at vertical lines $t = \pm 1$. The only
possible singularities of a Cerf diagram are cusps and simple transversal
self-intersections. By orienting all the functions in a path one may associate a
sign with each cusp. The parity of negative cusps is a well-defined invariant of
a path $\{ f_t \}$, i.e. it doesn't depend on the orientations. Another
invariant of a path is a number of self-intersections of its Cerf diagram (i.e.
the number points $t_0$ s.t. $f_{t_0}$ is not strong). In the following
statement we consider functions on a cylinder $M \times [0,1]$, which is
manifold with boundary. Morse theory translates to this setting with ease.
    
\begin{restatable}{cor}{CorAkhCorRestate}
  \label{cor:akh_cor}
  Let $\{ f_t \}$ be a generic path of functions on a cylinder $M \times [0,1]$
  s.t. both $f_{-1}$ and $f_1$ have no critical points. Let $\X$ be the number
  of self-intersections of its Cerf diagram and $\nC$ be the number of negative
  cusps. Then one has
  \[
    \X + \nC = 0 \pmod{2}.
  \]
\end{restatable}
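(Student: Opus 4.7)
The plan is to use the alternating product of Bruhat numbers as a bookkeeping device along the path $\{f_t\}$. Fix any field $\F$ (say $\F = \Q$) and orient the critical points continuously along the path, so that for each generic $t$ the alternating product $P(t) \in \F^*$ of Bruhat numbers of $f_t$ is well-defined (and not merely well-defined up to sign). Since $f_{-1}$ and $f_1$ have no critical points, we have $P(-1) = P(1) = 1$ as empty products.

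The function $t \mapsto P(t)$ is locally constant away from the singular moments of the Cerf diagram, so it suffices to analyze its jumps at each such $t_0$. I would use the explicit description of how Barannikov pairs and Bruhat numbers evolve under a generic one-parameter deformation, developed in the preceding parts of Section~\ref{subs:one_par} (whose content is, in spirit, a refinement of Theorem~\ref{thm:no_hom} along a path).

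At a self-intersection two critical values swap; a case analysis according to the position of the two points in the Barannikov pairing (both unpaired, one paired and the other not, paired with each other, paired with distinct partners, etc.) shows that $P(t)$ is multiplied by $-1$ in every case. At a cusp, a pair of critical points of neighboring indices $k$ and $k+1$ is born or dies and immediately forms (or ceases to form) a Barannikov pair whose Bruhat number is $\pm 1$, the sign coinciding with the sign of the cusp. Consequently $P(t)$ picks up a factor $+1$ at a positive cusp and $-1$ at a negative cusp. Multiplying over all singular moments, the total accumulated factor along the path is $(-1)^{\X + \nC}$, which must equal $P(1)/P(-1) = 1$; hence $\X + \nC \equiv 0 \pmod 2$.

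\textbf{Main obstacle.} The delicate step is verifying that at every self-intersection $P(t)$ picks up exactly $-1$ (and not $+1$), uniformly across the several combinatorial cases of how the Barannikov pairing can rewire when two critical values swap. This sign is not apparent from Theorem~\ref{thm:no_hom} alone (which is stated only in $\sfrac{\F^*}{\pm 1}$) and requires a careful orientation-aware bookkeeping of how a $2 \times 2$ block in the Morse differential transforms under the crossing, together with a check that the alternating sign convention in $P$ converts this into a uniform factor of $-1$. The cusp contribution, by contrast, is essentially tautological once continuous orientations have been fixed, since each cusp simply inserts or removes a single factor $\pm 1$ into the product.
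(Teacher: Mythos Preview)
There is a genuine gap in your argument, and it occurs precisely at the step you flagged as the main obstacle. Your claim that the bare alternating product $P(t)$ of Bruhat numbers picks up a factor $-1$ at \emph{every} Maxwell event is false. When the bifurcation is \emph{trivial} (the two swapping critical values keep their original Barannikov partners and the pairing does not rewire), Proposition~\ref{prop:self_int} says that all Bruhat numbers remain unchanged, so $P(t)$ is multiplied by $+1$, not $-1$. Your case analysis therefore cannot succeed as stated.

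The paper repairs exactly this by replacing $P$ with $\tau'(f,\F)=(-1)^{\O}\prod_{s\in U}\lambda(s)^{(-1)^{\deg s}}$, where $\O$ counts pairs of Barannikov pairs that overlap as intervals. At a trivial Maxwell event the Bruhat numbers are fixed but the swap of the two adjacent critical values changes $\O$ by exactly one, supplying the missing sign; at a non-trivial Maxwell event the product of Bruhat numbers itself changes sign while $\O$ stays fixed. Thus $\tau'$, and not $P$, is the quantity that changes sign at every self-intersection. The cusp analysis you give is fine and matches the paper. So your overall strategy is right, but to make it work you must introduce the overlap counter $\O$ (or an equivalent combinatorial sign) to handle the trivial-bifurcation case.
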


In \cite{Akhm} this statement was proved under additional assumptions on $M$. In
\Cref{subs:akh} we derive this corollary from a more general \Cref{thm:akh}.

\subs{Related work} Barannikov pairs were introduced in \cite{Bar} (see
\cite{EH} for a recent survey). A close idea of construction of Bruhat numbers
over $\Q$ appeared independently in \cite{Vit2}. In \cite{UZ} authors prove a
theorem analogous to Barannikov's in the setting of complexes over the Novikov
ring, which is useful in symplectic topology. Translation of Bruhat numbers to 
this setting is a current work in progress. Reidemeister torsion in the setting
of Morse-Novikov theory is studied in \cite{Hut1,Hut2,Hut3}.

\subs{Organization of the paper} The first two sections provide an algebraic
foundation for the further Morse-theoretical results. Namely, in
\Cref{sec:enh_spaces} we do the neccesary linear algebra and emphasize
connection with the Bruhat decomposition. In \Cref{sec:enh_comp} similar in
spirit constructions are presented in realm of homological algebra over a field.
\Cref{sec:morse,sec:br_tors,sec:one_par} contain (but not exhausted by) results
described in, respectively, \Cref{subs:morse,subs:br_tors,subs:one_par}.

The paper contains many constructions and we therefore use a special environment
for them. The text after the word ``construction'' describes input and output.
The actual algorithm is placed between signs $\vartriangleright$ and
$\vartriangleleft$. By default construction doesn't involve any choices, i.e.
output depends only on the input. If it is not the case, we indicate this
explicitly.

\subs{Acknowledgements} P. Pushkar is supported by Russian Foundation for Basic
Research under the Grants RFBR 18-01-00461 and supported in part by the Simons
Foundation. M. Tyomkin’s research was carried out within the framework of the
Basic Research Program at HSE University and funded by the Russian Academic
Excellence Project ``5-100''. We are grateful to Leonid Rybnikov for fruitful
discussions. We thank Andrei Ionov for bringing our attention to the fact that
the word ``enhanced'' has been used since \cite{BB} for distinguishing, in
particular, unitriangular group from upper triangular. Second author thanks
Sergey Melikhov for comments on the first version. He also thanks Petr Akhmetev
for numerous explanations of the paper \cite{Akhm}. He thanks Anton Ayzenberg
for the reference \cite{Cayley}.

\section{Enhanced vector spaces}
\label{sec:enh_spaces}
In this section we define and study enhanced vector spaces~--- a notion which we
rely on in \Cref{sec:enh_comp}. All the constructions lie within the scope of
linear algebra. Moreover, our main statement here (\Cref{lem:class_space}) may
be formulated exclusively in terms of matrices, which is done right below
(\Cref{lem:class_space_coord}). Later in \Cref{sec:enh_comp} we proceed
similarly in the setting of chain complexes over a field.

\subs{Formulation of results} In this subsection we introduce main definitions
of this section and formulate the main \Cref{lem:class_space}.

We start with the coordinate formulation. Let $n$ and $m$ be two natural numbers
fixed once and for all throughout this section. Fix also a base field $\F$, all
matrices are assumed to be over it.

Let $T_n$ be the group of unitriangular matrices, i.e. upper
triangular $n \times n$ matrices with ones on the diagonal. The group
$T_n \times T_m$ acts on the set $\text{Mat}_{n,m}$ of all $n \times m$
matrices: $X \mapsto AXB^{-1}$. Since one has commuting actions of both $T_n$
and $T_m$, the orbit space is usually denoted as
${T_n} \backslash \text{Mat}_{n,m} / {T_m}$. Note that two $n \times m$ matrices
lie in the same $T_n$ orbit if and only if one can be obtained from another by a
succesive performing of the following elementary operation: add to one row a
scalar multiple of another one, provided that the latter is higher than the
former. The same goes for $T_m$ and column operation.

\begin{defin}
  An $n \times m$ matrix is called a \emph{rook} matrix if in every row and in
  every column there is at most one non-zero entry.
\end{defin}

\begin{lem}
\label{lem:class_space_coord}
Every orbit in ${T_n} \backslash \text{Mat}_{n,m} / {T_m}$ contains exactly one
rook matrix. \qed
\end{lem}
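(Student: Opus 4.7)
The plan is to prove existence and uniqueness separately, both by induction on the size of the matrix.

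\textbf{Existence.} Given $X$, I reduce it to a rook matrix by iterated pivoting. Find the left-most non-zero column $j_1$, and within it the bottom-most non-zero entry $X_{i_1,j_1}=\lambda$. The allowed row operations (adding later rows to earlier ones) let me use $(i_1,j_1)$ as a pivot to zero out the rest of column $j_1$; the allowed column operations (adding earlier columns to later ones) let me zero out the rest of row $i_1$. The pivot entry is now isolated, and the operations acting only on rows $\neq i_1$ and columns $\neq j_1$~--- which form the $T_{n-1}\times T_{m-1}$ action on the complementary $(n-1)\times(m-1)$ submatrix~--- do not disturb it, because column $j_1$ is now zero off row $i_1$ and vice versa. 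I recurse.

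\textbf{Uniqueness.} The last row of $X$ is $T_n$-invariant (nothing lies below it) and is acted on by $T_m$ from the right. A direct check shows the only invariants of this action on a row vector are whether it is zero and, if not, the position $j_n$ and value $\lambda_n$ of its first non-zero coordinate. Since in a rook matrix the last row has at most one non-zero, the last row of any rook matrix in $[X]$ is forced. If it is zero, the entire orbit has vanishing last row and I truncate to an $(n-1)\times m$ problem. Otherwise, I use column-then-row operations (as in the existence step) to put $X$ into \emph{cross form}, where row $n$ and column $j_n$ each have only the single non-zero entry $\lambda_n$ at $(n,j_n)$. Deleting row $n$ and column $j_n$ from the cross form yields an $(n-1)\times(m-1)$ submatrix $\bar X$; by induction $\bar X$ has a unique rook canonical form, which together with the pivot at $(n,j_n)$ assembles to the unique rook matrix in $[X]$.

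\textbf{Main obstacle.} The key technical step in the cross-form case is to verify that the truncation is well-defined on orbits: if $X,Y\in\mathrm{Cross}$ lie in the same $T_n\times T_m$-orbit, then $\bar X,\bar Y$ lie in the same $T_{n-1}\times T_{m-1}$-orbit. Concretely, writing $Y=AXB^{-1}$ and expanding entry-wise, the cross-form vanishing of $X$ outside the pivot kills the contributions of the $n$-th column of $A$ and the $j_n$-th column of $B^{-1}$, while the constraint $(B^{-1})_{j_n,k}=0$ for $k\neq j_n$ (forced by the cross-form condition on row $n$ of $Y$) turns the restriction of $B^{-1}$ to indices $\neq j_n$ into $\bar B^{-1}$ via a block-triangular inversion identity. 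The net effect is $\bar Y=\bar A\,\bar X\,\bar B^{-1}$ in $T_{n-1}\times T_{m-1}$, which is exactly what the induction needs.
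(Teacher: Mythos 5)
Your proof is correct, but it takes a genuinely different route from the paper's. The paper never argues at the level of matrix entries: it deduces the statement from the equivalent \Cref{lem:class_space} about enhanced vector spaces. There, uniqueness is immediate from \Cref{con:rook}, which computes the rook matrix intrinsically from the map and the two enhancements (so it is an orbit invariant by construction), and existence is obtained by factoring the map as $V \onto \sfrac{V}{\ker A} \xto{\sim} \im A \into W$, inducing enhancements on the middle terms (\Cref{con:surj_enh,con:inj_enh}), settling the isomorphism case by counting jumps of $x \mapsto \dim (A(V^s)\cap W^x)$ (\Cref{prop:bruhat_iso}), and lifting bases back (\Cref{prop:surj_enh,prop:inj_enh}); the authors explicitly say they try to avoid induction. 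Your argument is the elementary elimination-theoretic one: existence by pivoting, uniqueness by extracting the complete right-$T_m$-invariant of the (left-$T_n$-invariant) last row and reducing, via the cross form, to the $(n-1)\times(m-1)$ case. Your approach is shorter and self-contained at the matrix level; the paper's buys the coordinate-free description of the invariant, which is what gets reused later when Bruhat numbers are read off connecting homomorphisms rather than matrices. One bookkeeping remark on your key computation: for $i<n$ and $j\ne j_n$ the term contributed by the $n$-th column of $A$ equals $A_{i,n}\,\lambda_n\,(B^{-1})_{j_n,j}$, so it is killed by the constraint $(B^{-1})_{j_n,j}=0$ that you derive from row $n$ of $Y$, not by the vanishing of $X$ outside the cross; since you do invoke that constraint, the argument stands. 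Note also that you do not actually need the inversion identity identifying the truncation of $B^{-1}$ with $\bar B^{-1}$ --- it suffices that the principal submatrix of $B^{-1}$ on indices $\ne j_n$ is again unitriangular.
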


The classical Bruhat decomposition is obtained from the above statement in two
steps:
\begin{enumerate*}[label=\arabic*)] \item restrict to the case $n=m$ and
  consider only non-degenerate square matrices, \item replace $T_n$ by an upper
  triangular group. \end{enumerate*} Keeping in mind the slightly greater level
of generality we propose the term ``enhanced Bruhat decomposition'' (see,
however, \cite{BB}). Note that rook matrix stores, in particular, the set of
elements from $\F^*$, in contrast to the matrix of permutation in the classical
case. The proof, however, goes along the same lines. See \cite{FultonHarris} for
an in-depth discussion. See also \cite{DPS} for a close in spirit generalization
of Bruhat decomposition. 

We will now introduce the main notion of the present section. Several basic
facts about it will be presented further in
\Cref{subs:constr_rook,subs:proof_spaces}. Often Bruhat decomposition is proven
using inductive arguments. We tried to refrain from those during the course of
this section (or, at least, hide them under the carpet of explicit
constructions).

\begin{defin}
\label{def:enh_space}
Let $V$ be a vector space over $\F$. An \emph{enhancement $\kappa$ on a vector
  space} $V$ is a choice of two structures:
\begin{enumerate}[label=\arabic*)]
\item a full flag on $V$, i.e. a sequence of subspaces
  $0=V^{0}\subset V^{1}\subset \ldots \subset V^{\dim V}=V$ s.t.
  $\dim(\sfrac{V^{s}}{V^{s-1}})=1$, $s \in \{ 1, \dots, \dim V \}$;
\item a non-zero element $\kappa_{s}$ in a one-dimensional vector space
  $\sfrac{V^{s}}{V^{s-1}}$, $s \in \{ 1, \dots, \dim V \}$.
\end{enumerate}
A vector space $V$ with an enhancement will be called an \emph{enhanced vector
  space} and denoted \mbox{as $(V,\kappa)$}.
\end{defin}

\begin{defin}
  Let $(V, \kappa)$ and $(W, \mu)$ be two enhanced vector spaces of the same
  dimension. We say that they are \emph{isomorphic} (and use the symbol $\simeq$
  for this) if there exist an isomorphism of vector spaces
  $\phi \colon V \xto{\sim} W$ s.t.
\begin{enumerate*}[label=\arabic*)]
\item $\phi(V^s) = W^s$;
\item $\tilde{\phi_s}(\kappa_s)=\mu_s$, where
\end{enumerate*} %
\[ \tilde{\phi_s} \colon \sfrac{V^s}{V^{s-1}} \to \sfrac{W^s}{W^{s-1}} \] is
a map of quotient vector spaces induced by $\phi$.
\end{defin}

By a basis of a finite-dimensional vector space $V$ we will mean a linearly
ordered set of generators (zero vector space has empty set as its only basis).
Given a basis $v=(v_1,\dots,v_{\dim V})$ of $V$ one constructs an enhanced
vector space $(V,\kappa(v))$ in the following straightforward way. \mbox{For
  $s \in \{ 1, \dots, \dim V \}$} set $V^s\colonequals \<\! v_1,\dots,v_s\!\>$
and $\kappa(v)_s\colonequals p_s(v_s)$, where
$p_s\colon V^s\to\sfrac{V^s}{V^{s-1}}$ is a standard projection. By a basis of
an enhanced vector space $(V,\kappa)$ we will mean a basis $v$ of $V$ s.t.
$(V,\kappa)\simeq (V,\kappa(v))$.

The next lemma is equivalent to \Cref{lem:class_space_coord}.
\begin{restatable}{lem}{LemClassSpaceRestate}
\label{lem:class_space}
Let $(V, \kappa)$ and $(W, \mu)$ be two enhanced vector spaces and
$A \colon V \to W$ be a linear map. There exists a basis $v$ (resp. $w$) of an
enhanced vector space $(V, \kappa)$ (resp. $(W, \mu)$) s.t. the matrix of $A$ in
these bases is a rook matrix. Moreover, this rook matrix is uniquely defined.
\end{restatable}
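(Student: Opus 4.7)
\emph{Proof plan.} My plan is to deduce \Cref{lem:class_space} from \Cref{lem:class_space_coord} by showing that the choice of a basis of an enhanced vector space is canonical modulo the unitriangular group. Once this is established, the matrix of $A$ in adapted bases of $(V,\kappa)$ and $(W,\mu)$ is well-defined up to the $T_m \times T_n$-action (with $n = \dim V$ and $m = \dim W$), and the coordinate version supplies the unique rook representative.

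The first step is to understand the set of bases of an enhanced vector space $(V,\kappa)$; I would show it is a non-empty torsor over $T_n$. For non-emptiness, for each $s$ pick any lift $v_s \in V^s$ of $\kappa_s \in V^s/V^{s-1}$, and observe that $v = (v_1, \ldots, v_n)$ then satisfies $\kappa(v) = \kappa$ on the nose. For the torsor property, given two bases $v, v'$ of $(V,\kappa)$, define $P$ by $v'_j = \sum_i P_{ij} v_i$. The condition $v'_j \in V^j$ forces $P$ to be upper triangular, and the condition $p_j(v'_j) = \kappa_j = p_j(v_j)$ forces each diagonal entry of $P$ to be $1$, so $P \in T_n$. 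Conversely, any $P \in T_n$ sends a basis of $(V,\kappa)$ to another basis of $(V,\kappa)$.

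The second step is the translation. Fix any basis $v$ of $(V,\kappa)$ and any basis $w$ of $(W,\mu)$, and let $M$ be the matrix of $A$ in those bases. A direct change-of-basis computation shows that replacing $v$ by $vP$ and $w$ by $wQ$ with $(Q,P) \in T_m \times T_n$ transforms $M$ into $Q^{-1} M P$, which is precisely the action in \Cref{lem:class_space_coord}. Hence the orbit of $M$ depends only on $A$, and by \Cref{lem:class_space_coord} contains a unique rook matrix $R$. Choosing $(Q,P)$ with $Q^{-1} M P = R$ and passing to the bases $(vP, wQ)$ gives the desired pair realizing $R$; uniqueness of $R$ in its orbit yields the uniqueness statement in \Cref{lem:class_space}.

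I expect the only real obstacle to be careful bookkeeping with conventions (row-versus-column and left-versus-right actions) so that the right action of $T_n$ on bases of $(V,\kappa)$ lines up with the right action of $T_n$ on matrices in \Cref{lem:class_space_coord}. The substantive content, namely the existence and uniqueness of the rook matrix in each orbit, is entirely shouldered by the coordinate version.
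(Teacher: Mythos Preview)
Your reduction is correct: bases of an enhanced vector space do form a $T_n$-torsor, and the change-of-basis computation is exactly the double-coset action of \Cref{lem:class_space_coord}. However, this is a genuinely different route from the paper's, and within the paper's logical order it would be circular. The paper states \Cref{lem:class_space_coord} without proof, remarks that it is equivalent to \Cref{lem:class_space}, and then proves \Cref{lem:class_space} directly in \Cref{subs:proof_spaces}; so the coordinate lemma is established \emph{via} the coordinate-free one, not conversely.

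The paper's own argument runs as follows. Uniqueness comes from existence together with \Cref{con:rook}: if $A$ is already a rook matrix in some pair of bases, \Cref{con:rook} recovers that same matrix, so the rook form is an invariant. For existence, the paper first handles the case of an isomorphism (\Cref{prop:bruhat_iso}) by a ``new jump'' argument on the functions $x \mapsto \dim(A(V^s) \cap W^x)$. The general case is then obtained by factoring $A$ as $V \twoheadrightarrow V/\ker A \xrightarrow{\sim} \im A \hookrightarrow W$, inducing enhancements on the middle terms via \Cref{con:surj_enh} and \Cref{con:inj_enh}, applying \Cref{prop:bruhat_iso} to the isomorphism, and extending the resulting bases outward using \Cref{prop:surj_enh} and \Cref{prop:inj_enh}.

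Your approach is shorter and makes the unitriangular freedom transparent, but it offloads all the content to \Cref{lem:class_space_coord}, which you would still need to prove (e.g.\ by restricted Gaussian elimination). The paper's approach is self-contained and, as a byproduct, supplies the intrinsic description of the rook matrix in \Cref{con:rook}, which is reused later (for instance in \Cref{subs:matr_spaces} and \Cref{rem:bd_class_space}).
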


The change of basis in $(V, \kappa)$ (resp. $(W, \mu)$) results in
multiplication of matrix of $A$ by a matrix from $T_{\dim V}$ (resp.
$T_{\dim W}$). So, \Cref{lem:class_space} describes orbits in
${T_{\dim W}} \backslash \text{Mat}_{\dim W, \dim V} / {T_{\dim V}}$.

We will stick to the above formulation. It is possible to state the same without
appealing to any bases whatsoever; this is done in \Cref{subs:term_dig}. We call
non-zero elements of the mentioned rook matrix ``Bruhat numbers'' of a map
between two enhanced vector spaces.

\begin{rem}
  \label{rem:field_ext}
  If the field $\mathbb K$ is an extension of $\F$ then one may consider $A$ as
  a map between vector spaces over $\mathbb K$. It's plain to see that rook
  matrix won't change after this operation. Indeed, multiplication of matrices
  only involves additions and multiplications.

  Usually what we are given in topological setup is a matrix over $\Z$ (note
  that this is not the case in \Cref{subs:rt_recall,subs:rt_bruhat}). We then
  choose some field $\F$ and merely consider this matrix over this field. It
  follows from the previous paragraph that it is enough to consider only $\Q$
  and $\F_p$.
\end{rem}

\begin{rem}  
\label{rem:bases_not_unique}
In \Cref{lem:class_space} bases $v$ and $w$ themselves need not be
unique. %
\end{rem}

\subs{Construction of a rook matrix}
\label{subs:constr_rook}
In this subsection we associate a rook matrix to a given map between enhanced
vector spaces. In \Cref{subs:proof_spaces} we will show that this is the same
matrix as the one addressed in \Cref{lem:class_space}.

We will need the following construction as a preliminary step.

\begin{con}
\label{con:surj_enh}
Let $(V, \kappa)$ be an enhanced vector space and $A \colon V \onto W$ be a surjective
map of vector spaces. We will now construct an induced enhancement on $W$.

\triright For $s \in \{ 1, \dots, \dim V \}$ define $\phi_s$ to be the
composition 
\begin{center}
  \begin{tikzcd}
    V^s \arrow[r, hook] & V \arrow[r, two heads, "A"] & W. 
  \end{tikzcd}
\end{center}
Take any $s$ s.t. $\dim \im \phi_s = \dim \im \phi_{s-1} + 1$ and denote this
number by $t$. Set $W^t$ to be $\im \phi_s$. This defines a full flag on $W$,
i.e. a vector space $W^t$ for any $t \in \{ 1, \dots, \dim W \}$. We now need to
produce an element $\mu_t$ in the vector space $\sfrac{W^t}{W^{t-1}}$; this vector
space coincides with $\sfrac{\im \phi_s}{\im \phi_{s-1}}$. Define $\mu_t$ to be
$\tilde {\phi_s}(\kappa_s)$, where
\[
  \tilde {\phi_s} \colon \sfrac{V^s}{V^{s-1}} \xto{\sim} \sfrac{\im
    \phi_s}{\im \phi_{s-1}}
\]
is an isomorphism of quotient vector spaces induced by $\phi_s$. We have obtained an
enhanced vector space $(W, \mu)$. \trileft
\end{con}

The following proposition will be used later in \Cref{subs:proof_spaces}.

\begin{prop}
  \label{prop:surj_enh}
  Let $(V, \kappa)$ be an enhanced vector space and $A \colon V \onto W$ be a
  surjective map of vector spaces. \Cref{con:surj_enh} produces an enhanced vector space
  $(W, \mu)$. We claim that for any basis of $(W, \mu)$ one can find a basis of
  $(V, \kappa)$ s.t. $A$ maps each basis element to either zero or another basis
  element.
\end{prop}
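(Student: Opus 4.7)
The plan is to construct the desired basis $v = (v_1, \ldots, v_{\dim V})$ of $(V, \kappa)$ element by element, in increasing order of $s$. At each step, $v_s$ will be chosen to lift $\kappa_s$ along the projection $p_s \colon V^s \to \sfrac{V^s}{V^{s-1}}$, while simultaneously arranging $A(v_s)$ to be either $0$ or one of the prescribed basis vectors $w_t$ of $(W, \mu)$.

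For each $s \in \{1, \dots, \dim V\}$ there are two cases, depending on whether $\dim \im \phi_s$ equals $\dim \im \phi_{s-1}$ or $\dim \im \phi_{s-1} + 1$. Start by picking any lift $\tilde v_s \in V^s$ of $\kappa_s$. In the first (non-jump) case, $A(\tilde v_s) \in \im\phi_s = \im\phi_{s-1} = A(V^{s-1})$, so one may subtract from $\tilde v_s$ a preimage in $V^{s-1}$ to obtain $v_s$ with $A(v_s) = 0$ and $p_s(v_s) = \kappa_s$. In the second (jump) case, let $t$ be the corresponding index as in \Cref{con:surj_enh}, so that $W^t = \im\phi_s$, $W^{t-1} = \im\phi_{s-1}$, and $\mu_t = \tilde\phi_s(\kappa_s)$. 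Both $A(\tilde v_s)$ and $w_t$ project to $\mu_t$ in $\sfrac{W^t}{W^{t-1}}$, so their difference lies in $W^{t-1} = A(V^{s-1})$; subtract a preimage in $V^{s-1}$ to obtain $v_s$ with $A(v_s) = w_t$ and $p_s(v_s) = \kappa_s$.

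Finally, verify that the tuple $v$ really is a basis of the enhanced space $(V, \kappa)$. Since $p_s(v_s) = \kappa_s \neq 0$, we have $v_s \notin V^{s-1}$, so an easy induction on $s$ shows $\<v_1, \dots, v_s\> = V^s$. Combined with $p_s(v_s) = \kappa_s$, this says precisely that $(V, \kappa) \simeq (V, \kappa(v))$, as required.

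There is no serious obstacle in this argument; the only subtlety is bookkeeping the correspondence between the jump indices $s$ and the flag indices $t$ in $W$, and recognizing that the freedom in adjusting $\tilde v_s$ by an element of $V^{s-1}$ exactly matches the freedom in choosing a preimage of $A(\tilde v_s) - w_t$ (respectively of $A(\tilde v_s)$) under $A|_{V^{s-1}} \colon V^{s-1} \onto \im\phi_{s-1}$.
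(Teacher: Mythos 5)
Your proof is correct and takes essentially the same route as the paper's: split into cases according to whether $s$ is a jump of $\dim\im\phi_s$, producing a lift of $\kappa_s$ in $\ker A$ in the non-jump case and a preimage of the appropriate $w_t$ in the jump case. If anything you are more careful than the paper, whose proof says ``any vector from $V^s\setminus V^{s-1}$ whose class is $\kappa_s$'' (resp.\ ``any preimage of $w_{\dim\im\phi_s}$'') and leaves implicit the correction by an element of $V^{s-1}$ that you spell out, which is exactly what forces $A(v_s)=0$ in the first case and $v_s\in V^s$ with $p_s(v_s)=\kappa_s$ in the second.
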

\begin{proof}
  We continue using notations from \Cref{con:surj_enh}. Let
  $w=(w_1, \dots, w_{\dim W})$ be the given basis of $(W, \mu)$. Take any
  $s \in \{1, \dots, \dim V \}$. The difference
  $\dim \im \phi_s - \dim \im \phi_{s-1}$ is either zero or one. In the former
  case set $v_s$ to be any vector from $V^s \setminus V^{s-1}$ s.t. its class in
  $\sfrac{V^s} {V^{s-1}}$ coincides with $\kappa_s$. In the latter case set
  $v_s$ to be any preimage of $w_{\dim \im \phi_s}$ under $A$. It is
  straightforward to check that the basis $(v_1, \dots, v_{\dim V})$ satisfies
  the desired property.
\end{proof}
\Cref{con:inj_enh} and \Cref{prop:inj_enh} are analogous statements for the case
of injective map. In order to proceed to the construction of a rook matrix we
need two more definitions. For a non-zero element $v\in V$ of an enhanced vector
space $(V,\kappa)$ we define, first, a function $\Ht(v)$ (stands for height) to
be minimal $s$ s.t. $v \in V^s$. Second, we define a function $\cf(v)$ (stands
for coefficient) to be equal to
$\lambda \in \F^* \colonequals \F \setminus \{0 \}$ s.t.
$p(v)= \lambda \kappa_{\Ht(v)}$, where $p$ is a projection
$V^{\Ht(v)} \onto \sfrac{ V^{\Ht(v)} }{ V^{\Ht(v)-1} }$.

\begin{con}
\label{con:rook}
Given a map $A \colon V \to W$ between enhanced vector spaces $(V, \kappa)$ and
$(W, \mu)$ we will now construct a rook matrix $R$ of size
$(\dim W) \times (\dim V)$.

\triright Fix any $s \in \{ 1, \dots, \dim V \}$. Consider a surjective map
$S \colon W \onto \sfrac{W}{A(V^{s-1} ) }$ and use \Cref{con:surj_enh} to get an
enhanced vector space $(\sfrac{W} {A(V^{s-1} ) } , \tilde{\mu} )$. Consider now an
element $\tilde{A} (\kappa_s) \in \sfrac{W} {A(V^{s-1} ) }$, where
\[ \tilde{A} \colon \sfrac{V^s} {V^{s-1} } \to \sfrac{W} {A(V^{s-1} ) }\] is
a map of quotient vector spaces induced by the restriction $A|_{V^s}$. If
$\tilde{A} (\kappa_s) = 0$ then the $s^{\text{th} }$ column of $R$ is set to
be zero. Otherwise, let $\lambda$ and $t'$ be, respectively, coefficient and
height of $\tilde{A} (\kappa_s)$. Let $t \in \{ 1, \dots, \dim W \}$ be the
only number satisfying the condition $\dim S(W^t) = \dim S(W^{t-1} ) + 1 = t'$.
Finally, we set $R_{t,s}$ to be $\lambda$ and all the other entries in the
$s^{\text{th} }$ column of $R$ to be zero.

It is straightforward to check that in each row there is at most one non-zero
element, i.e. $R$ is indeed a rook matrix. \trileft
\end{con}

\begin{rem}
  Informally, the fact that the entry $R_{t,s}$ of a rook matrix $R$ is non-zero
  means that the image of $\kappa_s$ under $A$ first appears in the
  $t^{\text{th}}$ flag space in $W$.
\end{rem}

\subs{Terminological digression}
\label{subs:term_dig}
In this subsection we introduce a bit of terminology which will be useful for
understanding the content of \Cref{sec:enh_comp}.

Let $X$ and $Y$ be two sets and $\sim$ be an equivalence relation on $X$. If a
map $g \colon X \to Y$ is constant on the equivalence classes, then we say that
$g(x)$ is an invariant of some element $x \in X$. If, moreover, the induced map
$\tilde{g} \colon \sfrac{X} {\sim} \to Y$ is a bijection of sets then we say
that $g(x)$ is a full invariant.

Our next goal is to introduce a certain equivalence relation on the set of maps
between fixed enhanced vector spaces. By an automorphism of an enhanced vector
space $(V, \kappa)$ we will mean an isomorphism from $(V, \kappa)$ to itself. We
say that two maps $A$ and $B$ between enhanced vector spaces $(V, \kappa)$ and
$(W, \mu)$ are equivalent if there exist an automorphism $C_1$ (resp. $C_2$) of
$(V, \kappa)$ (resp. $(W, \mu)$) s.t. $C_2 A C_1 = B$.

\begin{rem}
\label{rem:equiv_bases}
  Note that $A$ and $B$ are equivalent if and only if there exist bases $v_a$
  and $v_b$ of $(V, \kappa)$ and bases $w_a$ and $w_b$ of $(W, \mu)$ s.t. the
  matrix of $A$ in bases $v_a$ and $w_a$ coincides with the matrix of $B$ in
  bases $v_b$ and $w_b$.
\end{rem}

Note that \Cref{con:rook} provides an invariant of a map between enhanced vector
spaces considered up to equivalence. This invariant takes values in the set of
rook matrices. It will follow from \Cref{subs:proof_spaces} that
\Cref{lem:class_space} can be reformulated as follows.
\begin{lem}
  Let $(V, \kappa)$ and $(W, \mu)$ be two fixed enhanced vector spaces and
  $A \colon V \to W$ be some linear map. Then the corresponding rook matrix
  provided by \Cref{con:rook}
  is a full invariant of a map considered up to equivalence. \qed
\end{lem}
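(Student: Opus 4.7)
The plan is to reduce this reformulation to the existence-and-uniqueness assertion of \Cref{lem:class_space} together with the characterization of equivalence from \Cref{rem:equiv_bases}. I will take as given the identification of the rook matrix produced by \Cref{con:rook} with the canonical form furnished by \Cref{lem:class_space}; that identification is precisely the content of \Cref{subs:proof_spaces} and is independent of the present bookkeeping. With it in hand, the two assertions ``invariant under equivalence'' and ``separates equivalence classes'' both reduce to a short shuffle of bases.

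For invariance under equivalence, I would start with equivalent maps $A$ and $B$, i.e.\ $B = C_2 A C_1$ where $C_1$ is an automorphism of $(V, \kappa)$ and $C_2$ an automorphism of $(W, \mu)$. By \Cref{lem:class_space} applied to $A$, I would choose bases $v$ of $(V, \kappa)$ and $w$ of $(W, \mu)$ in which $A$ is represented by its rook matrix $R$. The key observation is that enhancement automorphisms carry bases of an enhanced vector space to bases, so applying $C_1^{-1}$ to $v$ and $C_2$ to $w$ entrywise produces new bases $v'$ and $w'$ of $(V, \kappa)$ and $(W, \mu)$. A direct computation (essentially cancellation of $C_1$ against $C_1^{-1}$ and push-through by $C_2$) then shows that $B$ is represented by the same matrix $R$ in $(v', w')$. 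The uniqueness half of \Cref{lem:class_space}, applied now to $B$, forces its rook matrix to equal $R$ as well.

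For completeness I would argue in the opposite direction: suppose the rook matrices associated by \Cref{con:rook} to $A$ and to $B$ coincide, and call the common matrix $R$. \Cref{lem:class_space} supplies bases $v_a, w_a$ realizing $A$'s matrix as $R$ and, separately, bases $v_b, w_b$ realizing $B$'s matrix as $R$. \Cref{rem:equiv_bases} then immediately yields the equivalence $A \sim B$.

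The main obstacle, in short, lies not in this reformulation but in the two inputs: \Cref{lem:class_space} itself (the Bruhat-type canonical form theorem) and the verification that \Cref{con:rook} actually computes that canonical form. I expect the latter to require the most care, since it demands tracking how the quotient enhancement produced by \Cref{con:surj_enh} interacts with the successive heights and coefficients appearing in the algorithm; but both of these live in \Cref{subs:proof_spaces} rather than in the present lemma.
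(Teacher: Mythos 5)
Your argument is correct and takes essentially the same route as the paper, which likewise reduces this lemma to the existence-and-uniqueness statement of \Cref{lem:class_space} (proved in \Cref{subs:proof_spaces}) combined with \Cref{rem:equiv_bases}, leaving the bookkeeping implicit. The basis-shuffle you spell out for the invariance half is exactly the content of \Cref{rem:equiv_bases} together with the uniqueness clause of \Cref{lem:class_space}, and your converse direction is the paper's intended use of the existence clause.
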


\subs{Proof of the main \Cref{lem:class_space}}
\label{subs:proof_spaces}
In this subsection we prove \Cref{lem:class_space}. First, we prove the partial
case when the map in question is an isomorphism. Second, we derive the general
statement from it.

First of all, recall the lemma itself.

\LemClassSpaceRestate*  

See \Cref{subs:tors} for a context. 

We will now deduce uniqueness from the existence. Suppose the map
$A$ is represented by a rook matrix $R$ in some bases $v$ and $w$. Then one
checks straightforwardly that the \Cref{con:rook} produces the same matrix $R$
as an output. Therefore $R$ is an invariant of a map $A$ and we're done. The
rest of this subsection is devoted to proving the existence part.

The next proposition is a partial case which will be used later.

\begin{prop}
  \label{prop:bruhat_iso}
  Let $(V, \kappa)$ and $(W, \mu)$ be two enhanced vector spaces and
  $A \colon V \xto{\sim} W$ be an isomorphism. There exists a basis $v$ (resp.
  $w$) of an enhanced vector space $(V, \kappa)$ (resp. $(W, \mu)$) s.t. the
  matrix of $A$ in these bases is a rook matrix.
\end{prop}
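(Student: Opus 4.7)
\medskip

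\noindent\textbf{Proof plan.} My plan is to proceed by induction on $n \colonequals \dim V = \dim W$, with the reduction step peeling off the line $V^1 \subset V$ together with a carefully chosen line in $W$. The base case $n = 0$ is trivial. For the inductive step I will use \Cref{con:rook} applied to $s = 1$ to identify where the first flag line of $V$ should land in the target flag, and then pass to one-dimension-smaller quotients with induced enhancements.

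Concretely, pick a vector $v_1 \in V^1$ whose class is $\kappa_1$. Since $A$ is an isomorphism the element $A(v_1)$ is non-zero; let $t_1 \in \{1,\dots,n\}$ be the unique index with $A(v_1) \in W^{t_1} \setminus W^{t_1-1}$, and write $A(v_1) \equiv \lambda \mu_{t_1} \pmod{W^{t_1-1}}$ with $\lambda \in \F^*$. Define $w_{t_1} \colonequals \lambda^{-1} A(v_1)$; by construction $w_{t_1} \in W^{t_1}$ and its class in $\sfrac{W^{t_1}}{W^{t_1-1}}$ equals $\mu_{t_1}$. Now form $\overline V \colonequals \sfrac{V}{V^1}$ with the evident enhancement (flag $\sfrac{V^{s+1}}{V^1}$, classes induced from $\kappa_{s+1}$), and $\overline W \colonequals \sfrac{W}{\<w_{t_1}\>}$ with the enhancement supplied by \Cref{con:surj_enh} applied to the quotient map $q \colon W \onto \overline W$. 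Since $A$ is an isomorphism and $A(v_1) \in \<w_{t_1}\>$, the map $A$ descends to an isomorphism $\overline A \colon \overline V \xto{\sim} \overline W$ of $(n-1)$-dimensional enhanced spaces.

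By the induction hypothesis there exist bases $(\overline v_2,\dots,\overline v_n)$ and $(\overline w_1,\dots,\overline w_{n-1})$ of $\overline V$ and $\overline W$ in which the matrix of $\overline A$ is a rook matrix. Lift each $\overline v_s$ to any $v_s \in V^s$ with $p_s(v_s) = \kappa_s$; together with $v_1$ this gives a basis of $(V,\kappa)$. On the $W$ side, apply \Cref{prop:surj_enh} to the surjection $q$ and the basis $(\overline w_1,\dots,\overline w_{n-1})$: this produces a basis $(w_1,\dots,w_n)$ of $(W,\mu)$ such that each $w_i$ is mapped by $q$ either to zero or to some $\overline w_{j}$. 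Inspecting \Cref{con:surj_enh} one sees that exactly one index is hit by zero, namely $t_1$, and moreover the freedom in \Cref{prop:surj_enh} allows us to take that basis element to be our previously fixed $w_{t_1}$.

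The matrix of $A$ in these bases has a single entry $\lambda$ in its first column (at row $t_1$) because $A(v_1) = \lambda w_{t_1}$. For $s \geq 2$, the relation $\overline A(\overline v_s) = R_{j_s,s-1} \overline w_{j_s}$ combined with $q(w_{i}) = \overline w_{j_s}$ for the matching index $i$ implies $A(v_s) = R_{j_s,s-1}\, w_i + c_s w_{t_1}$ for some $c_s \in \F$. Replacing $v_s$ by $v_s - (c_s/\lambda) v_1$ (which is still a valid lift since $v_1 \in V^1 \subseteq V^{s-1}$) kills the $w_{t_1}$-coefficient while not affecting any other column. The resulting matrix is a rook matrix, completing the induction. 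The main technical obstacle is keeping the enhancement data coherent under the quotient by $\<w_{t_1}\>$ (which is not a flag subspace of $W$), and this is exactly where \Cref{con:surj_enh} and \Cref{prop:surj_enh} earn their keep.
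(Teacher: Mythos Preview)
Your inductive argument is correct. The only place one might worry is the lift of $\overline v_s$: you write ``lift $\overline v_s$ to any $v_s\in V^s$ with $p_s(v_s)=\kappa_s$'', and it is worth noting that \emph{any} lift of $\overline v_s$ to $V$ automatically lies in $V^s$ and has class $\kappa_s$, so this is not an extra condition. After the final correction $v_s\mapsto v_s-(c_s/\lambda)v_1$ the columns of the matrix of $A$ hit pairwise distinct rows (since the $j_s$ form a permutation of $\{1,\dots,n-1\}$, being the row indices of an invertible rook matrix), and none of them equals $t_1$, so the result is indeed a rook matrix.

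Your route is genuinely different from the paper's. The paper gives a direct, non-inductive construction: for each $s$ it compares the two functions $x\mapsto\dim(A(V^s)\cap W^x)$ and $x\mapsto\dim(A(V^{s-1})\cap W^x)$, finds the unique ``new jump'' $t=t(s)$, and picks any vector in $A(V^s)\cap(W^t\setminus W^{t-1})$ as $w_s$ (with $v_s\colonequals A^{-1}(w_s)$). This produces all basis vectors in one sweep, with no quotient spaces and no clean-up step. The paper in fact remarks that it deliberately avoids inductive arguments for the Bruhat decomposition. Your approach, by contrast, is the classical inductive one, made rigorous in the enhanced setting by invoking \Cref{con:surj_enh} and \Cref{prop:surj_enh} to handle the quotient $W/\langle w_{t_1}\rangle$ (which is not a flag subspace). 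What you gain is modularity---you reuse machinery already set up---at the cost of the extra bookkeeping and the final column-cleaning pass.
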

\begin{proof}
  By a jump of a function $g \colon \{ 0, \dots, N \} \to \Z_{\geq 0}$, where
  $N \in \Z_{\geq 0}$ we will mean a number $x > 0$ s.t. $g(x) = g(x-1) + 1$.
  Fix any $s \in \{1, \ldots, \dim V \}$. Consider now a function
  $x \mapsto \dim (A(V^s) \cap W^x)$, for $x \in \{0, \ldots, \dim W \}$ (recall
  that $\dim V =\dim W$). It has exactly $s$ jumps. Moreover, every jump of a
  function $x \mapsto \dim (A(V^{s-1}) \cap W^x)$ is also a jump of the function
  under consideration. Therefore, the latter function has exactly one ``new''
  jump, call it $t$. It follows from the fact that $t$ is a jump that
  $A(V^s) \cap (W^t \setminus W^{t-1}) \ne \emptyset$; take any element $w$ from
  this set. It follows from the fact that $t$ is actually a new jump that
  $A^{-1}(w) \in V^s \setminus V^{s-1}$.

  By performing the above operation for all possible $s$ we construct a basis of
  $W$ and, by taking a preimage, a basis of $V$. The end of the preceeding
  paragraph implies that after appropiate reordering and rescaling these bases
  are bases of enhanced vector spaces $(V, \kappa)$ and $(W, \mu)$. The
  statement follows.
\end{proof}
\begin{rem}
  This is a known proof of the Bruhat decomposition for $GL_n$ adapted to our
  enhanced setting.
\end{rem}

\begin{con}
\label{con:inj_enh} 
Let $(W, \mu)$ be an enhanced vector space and $A \colon V \into W$ be an
injective map of vector spaces. We will now construct an induced enhancement on
$V$.

\triright For $s \in \{ 1, \dots, \dim W \}$ define $\phi_s$ to be the
composition
\begin{center}
  \begin{tikzcd}
    V \arrow[r, hook, "A"] & W \arrow[r, two heads] & \sfrac{W}{W^s}. 
  \end{tikzcd}
\end{center}
Take any $s$ s.t. $\dim \ker \phi_s = \dim \ker \phi_{s-1} + 1$ (call this
number $t$). Set $V^t$ to be $\ker \phi_s$. This defines a full flag on $V$,
i.e. a vector space $V^t$ for any $t \in \{1, \dots, \dim V \}$. We now need to
produce an element $\kappa_t$ in the vector space $\sfrac{V^t}{V^{t-1}}$, which
coincides with $\sfrac{\ker \phi_s}{\ker \phi_{s-1}}$. By identifying $V$ with
$\im A$, we say that $\ker \phi_s \subset W^s$. Define $\kappa_t$ to be
$\alpha^{-1}(\mu_s)$, where
\[
  \alpha \colon \sfrac{\ker \phi_s}{\ker \phi_{s-1}} \xto{\sim} \sfrac {W^s}
  {W^{s-1}}
\]
is an isomorphism of quotient vector spaces induced by the mentioned inclusion.
We have obtained an enhanced vector space $(V, \kappa)$. \trileft
\end{con}

\begin{prop}
\label{prop:inj_enh}
Let $(W, \mu)$ be an enhanced vector space and $A \colon V \into W$ be an
injective map of vector spaces. \Cref{con:inj_enh} produces an enhanced vector
space $(V, \kappa)$. We claim that for any basis of $(V, \kappa)$ one can find a
basis of $(W, \mu)$ s.t. $A$ maps each basis element to another basis element.
\end{prop}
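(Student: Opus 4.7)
The plan is to mirror the proof of Proposition \ref{prop:surj_enh}, exchanging the roles of image and kernel so as to fit the injective setting of Construction \ref{con:inj_enh}. Identify $V$ with $\im A \subset W$ via $A$. Under this identification the flag on $V$ produced by Construction \ref{con:inj_enh} is exactly $V^t = \im A \cap W^{s(t)}$, where $s(t)$ is the $t$-th ``jump'' of the non-decreasing function $s \mapsto \dim(\im A \cap W^s)$.

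Given a basis $v = (v_1, \dots, v_{\dim V})$ of $(V, \kappa)$, I would construct $w = (w_1, \dots, w_{\dim W})$ one index at a time. For each $s \in \{1, \dots, \dim W\}$ the difference $\dim(\im A \cap W^s) - \dim(\im A \cap W^{s-1})$ is either $0$ or $1$. In the non-jump case, set $w_s$ to be any vector in $W^s \setminus W^{s-1}$ whose class in $\sfrac{W^s}{W^{s-1}}$ equals $\mu_s$; such a vector exists because $\mu_s \neq 0$. In the jump case, let $t$ be the unique index with $s(t) = s$ and set $w_s \colonequals A(v_t)$; this lies in $W^s \setminus W^{s-1}$ because $v_t \in V^t \setminus V^{t-1}$ and $V^t = \im A \cap W^s$, $V^{t-1} = \im A \cap W^{s-1}$.

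The one thing that needs verification is that in the jump case $w_s = A(v_t)$ genuinely represents $\mu_s$ modulo $W^{s-1}$. Since $v$ is a basis of $(V,\kappa)$, the class of $v_t$ in $\sfrac{V^t}{V^{t-1}}$ equals $\kappa_t$. By the very definition of $\kappa_t$ in Construction \ref{con:inj_enh}, $\kappa_t = \alpha^{-1}(\mu_s)$, where $\alpha \colon \sfrac{V^t}{V^{t-1}} \xto{\sim} \sfrac{W^s}{W^{s-1}}$ is the isomorphism induced by the inclusion $V^t \subset W^s$. Applying $\alpha$ to both sides gives $A(v_t) \equiv \mu_s \pmod{W^{s-1}}$, which is exactly what is needed.

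It remains to check that the resulting tuple $w$ is a basis of $W$ compatible with the enhancement $\mu$, and that $A(v_t) = w_{s(t)}$ for every $t$. The former is automatic: by construction each $w_s$ projects to $\mu_s$ in $\sfrac{W^s}{W^{s-1}}$, so the $w_s$ span the flag step by step and form a basis of $(W, \mu)$. The latter is immediate from the definition of $w_s$ in the jump case. The main (and really the only) obstacle is the bookkeeping between the $t$-indexing on the $V$-side and the $s$-indexing on the $W$-side mediated by the jump function; once this is set up, the compatibility with the enhancements is read off directly from Construction \ref{con:inj_enh}.
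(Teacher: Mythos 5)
Your proposal is correct and is essentially the paper's own proof: the paper likewise constructs $w_s$ index by index, taking a representative of $\mu_s$ at non-jumps and $A(v_{\dim\ker\phi_s})$ at jumps of $s\mapsto\dim\ker\phi_s$ (which is your jump function under the identification $\ker\phi_s = \im A\cap W^s$). You merely spell out the verification that the paper leaves as ``straightforward to check.''
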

\begin{proof}
  We continue using notations from \Cref{con:inj_enh}. Let
  $v=(v_1, \dots, v_{\dim V})$ be a given basis of $(V, \kappa)$. Take any
  $s \in \{ 1, \dots, \dim W \}$. The difference
  $\dim \ker \phi_s - \dim \ker \phi_{s-1}$ is either zero or one. In the former
  case set $w_s$ to be any vector from $W^s \setminus W^{s-1}$ s.t. its class in
  $\sfrac{W^s} {W^{s-1}}$ coincides with $\mu_s$. In the latter case set $w_s$
  to be $A(v_{\dim \ker \phi_s})$. It is straightforward to check that the basis
  $(w_1, \dots, w_{\dim W})$ satisfies the desired property.
\end{proof}

\begin{proof}[Proof of \Cref{lem:class_space}]
Uniqueness is shown in the beginning of the present subsection. To show the
existence, consider the composition of three maps:
\[
  V \onto \sfrac{V}{\ker A} \xto{\sim} \im A \into W.
\]
Induce enhancement on $\sfrac{V}{\ker A}$ from $V$ via \Cref{con:surj_enh} and
on $\im A$ from $W$ via \Cref{con:inj_enh}. Apply \Cref{prop:bruhat_iso} to the
middle map to obtain bases $\tilde{v}$ and $\tilde{w}$ of its source and
target respectively. Apply now \Cref{prop:surj_enh} to the basis $\tilde{v}$
to get a basis $v$ of $V$. Apply \Cref{prop:inj_enh} to the basis
$\tilde{w}$ to get a basis $w$ of $W$. By construction, bases $v$ and $w$
are the desired ones.
\end{proof}
\subs{On a matrix of a map between enhanced vector spaces}
\label{subs:matr_spaces}
In this subsection we give several properties of a matrix of a map between
enhanced vector spaces, written in appropriate basis.

\begin{con}
Let $R$ be a rook $n \times m$ matrix. We will now define a subset $\Ll(R)$ of a
set $\text{Mat}_{n,m}$ of all $n \times m$ matrices.
\end{con}
\triright Let $M \in \text{Mat}_{n,m}$ be a matrix. We say that its entry
$M_{i,j}$ is covered if there exists a pair of indices $(i',j')$ s.t. the
following two conditions hold:
\begin{enumerate}[label=\arabic*)]
\item $R_{i',j'} \neq 0$,
\item
  $(i < i' \; \text{AND} \; j \geq j') \; \text{OR} \; (i \leq i' \; \text{AND}
  \; j > j' )$.
\end{enumerate}
The matrix $M$ is said to be in $\Ll(R)$ if the the following two conditions
hold:
\begin{enumerate}[label=\arabic*)]
\item if the entry $M_{i,j}$ is not covered and $R_{i,j} = 0$ then it equals to
  zero,
\item if the entry $M_{i,j}$ is not covered and $R_{i,j} \neq 0$ then $M_{i,j} =
  R_{i,j}$. \trileft
\end{enumerate}
Here is an example, for $\F = \Q$, of the matrix $R$ and the general form of a
matrix $M$ from the \mbox{set $\Ll(R)$}:
\[
 R = 
\begin{pmatrix}
    0 & 0 & 4 \\
    3 & 0 & 0 \\
    0 & 0 & 0 \\
    0 & 2 & 0 \\
\end{pmatrix}
, M =
\begin{pmatrix}
    * & * & * \\
    3 & * & * \\
    0 & * & * \\
    0 & 2 & * \\
\end{pmatrix}.
\]
In the case of classical Bruhat decomposition analogous set is nothing but a
Bruhat cell. The next proposition is straightforward and well-known in the
classical case.
\begin{prop}
\label{prop:bruhat_cell}
  Let $(V, \kappa)$ and $(W, \mu)$ be two enhanced vector spaces and
  $A \colon V \to W$ be some linear map. Let also $v$ (resp. $w$) be some basis
  of enhanced vector space $(V, \kappa)$ (resp. $(W, \mu)$). Then the matrix of
  $A$ in the bases $v$ and $w$ belongs to $\Ll(R)$, where $R$ is a rook matrix
  from \Cref{lem:class_space}.
\end{prop}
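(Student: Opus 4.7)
The plan is to reduce the proposition to a purely matrix-level statement, and then verify it by a direct expansion of a triple product.

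First, invoke \Cref{lem:class_space} to choose auxiliary bases $v^0$ of $(V,\kappa)$ and $w^0$ of $(W,\mu)$ in which the matrix of $A$ is exactly the rook matrix $R$. If $v$ is any other basis of the enhanced vector space $(V,\kappa)$, then by definition $v$ induces the same flag and the same classes $\kappa_s \in \sfrac{V^s}{V^{s-1}}$ as $v^0$; this forces $v_s - v^0_s \in V^{s-1}$ for every $s$, so the change-of-basis matrix $P$ expressing $v$ in terms of $v^0$ lies in $T_{\dim V}$. Likewise the change-of-basis $Q$ from $w^0$ to $w$ lies in $T_{\dim W}$. A routine computation shows that the matrix of $A$ in the bases $v,w$ is $M=Q^{-1}RP$, which is of the form $URV$ with $U \in T_{\dim W}$ and $V \in T_{\dim V}$ (both unitriangular).

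Second, it suffices to prove the following matrix-level claim: for any rook matrix $R$ and any $U \in T_n$, $V \in T_m$, one has $URV \in \Ll(R)$. Expanding entrywise,
\[
  (URV)_{i,j} \;=\; \sum_{k,l} U_{i,k}\, R_{k,l}\, V_{l,j}.
\]
Upper triangularity forces $U_{i,k}=0$ unless $k \geq i$ and $V_{l,j}=0$ unless $l \leq j$, so the sum is supported on pairs $(k,l)$ with $k \geq i$ and $l \leq j$.

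Third, examine a position $(i,j)$ that is not covered in the sense of the definition of $\Ll(R)$. By definition this means that no pair $(k,l) \neq (i,j)$ with $k \geq i$, $l \leq j$ satisfies $R_{k,l} \neq 0$. Consequently every term in the sum above vanishes except possibly the one with $(k,l)=(i,j)$, which equals $U_{i,i}R_{i,j}V_{j,j}=R_{i,j}$ since the diagonal entries of $U$ and $V$ are $1$. Hence $(URV)_{i,j}=R_{i,j}$ whenever $(i,j)$ is uncovered, verifying simultaneously both clauses in the definition of $\Ll(R)$.

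The main obstacle is essentially bookkeeping: one has to get the conventions right so that the change-of-basis matrices really land in $T_{\dim V}$ and $T_{\dim W}$, and so that the triple product is assembled correctly. Once this is in place the computation is a single line of arithmetic using the support conditions on $U$, $V$, and $R$.
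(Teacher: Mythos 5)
Your proof is correct, and it is exactly the straightforward verification that the paper declares ("straightforward and well-known in the classical case") but omits: reduce to showing $URV\in\Ll(R)$ for unitriangular $U,V$, then note that "uncovered at $(i,j)$" is precisely the statement that $(i,j)$ is the only position in the support region $\{k\geq i,\ l\leq j\}$ of the triple sum where $R$ can be nonzero. All steps, including the identification of the covering condition with that support condition, check out.
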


The next statement links properties of integral matrix (considered up to
unitriangular change of basis) and its enhanced Bruhat decomposition over $\Q$.
Let $\text{Mat}_{n,m}(\Z)$ be the set of all $n \times m$ matrices over $\Z$. In
what follows we will sometimes view it as a subset of matrices over $\F = \Q$
without mentioning this explicitly. Let also $T_n(\Z)$ be the group of
unitriangular matrices \mbox{over $\Z$}. The group $T_n(\Z) \times T_m(\Z)$ acts
on the set $\text{Mat}_{n,m}(\Z)$. The next proposition follows from
\Cref{prop:bruhat_cell}.

\begin{prop}
  \label{prop:matr_spaces}
  Consider $M \in \text{Mat}_{n,m}(\Z)$. Then any element $M'$ from the orbit
  ${T_n(\Z) \cdot M \cdot T_m(\Z)}$ lies in the set $\Ll(R)$, where $R$ is a
  rook matrix over $\F = \Q$ associated \mbox{with $M$}.
\end{prop}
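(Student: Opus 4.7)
The plan is to reduce this directly to \Cref{prop:bruhat_cell} by choosing the right enhanced vector spaces and the right bases.

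First, I equip $\Q^m$ and $\Q^n$ with their \emph{standard} enhancements $\kappa^{\text{std}}$ and $\mu^{\text{std}}$: take the full flags spanned by initial segments of the standard basis vectors $e_1, \ldots, e_m$ (resp.\ $e_1, \ldots, e_n$), and let $\kappa^{\text{std}}_s$ (resp.\ $\mu^{\text{std}}_t$) be the class of $e_s$ (resp.\ $e_t$) in the one-dimensional quotient. Viewing the entries of $M$ in $\Q$, the matrix $M$ then represents a linear map $A \colon (\Q^m, \kappa^{\text{std}}) \to (\Q^n, \mu^{\text{std}})$ written in the standard bases. By \Cref{rem:field_ext} and the construction in \Cref{subs:matr_spaces}, the rook matrix $R$ associated with $M$ over $\Q$ is precisely the one produced by \Cref{con:rook} for this map $A$.

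The next observation is the translation between the orbit $T_n(\Z) \cdot M \cdot T_m(\Z)$ and changes of basis in enhanced vector spaces. Since $T_n(\Z) \subset T_n(\Q)$ and $T_m(\Z) \subset T_m(\Q)$, any pair $(U, V) \in T_n(\Z) \times T_m(\Z)$ is in particular a pair of unitriangular matrices over $\Q$. Unitriangular matrices are exactly those change-of-basis matrices that preserve the standard enhancement (they fix each subspace of the flag and induce the identity on the associated graded). Thus the basis of $\Q^m$ obtained from the standard one by applying $V$ is again a basis of the enhanced vector space $(\Q^m, \kappa^{\text{std}})$ in the sense of \Cref{def:enh_space}, and similarly for $U$ on the target side.

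Now the matrix $M' \in T_n(\Z) \cdot M \cdot T_m(\Z)$ is, by construction, the matrix of the \emph{same} linear map $A$ written in some pair of bases $v'$, $w'$ of the enhanced vector spaces $(\Q^m, \kappa^{\text{std}})$ and $(\Q^n, \mu^{\text{std}})$. Applying \Cref{prop:bruhat_cell} to $A$ and to these bases yields $M' \in \Ll(R)$, which is the desired conclusion.

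No real obstacle arises: the whole argument is a transcription of \Cref{prop:bruhat_cell} through the dictionary ``unitriangular over $\Z$ implies unitriangular over $\Q$, hence an admissible change of enhanced basis.'' The only point worth double-checking is the convention relating left/right multiplication by $T_n(\Z) \times T_m(\Z)$ to change of basis (one side possibly appears as $V^{-1}$ rather than $V$), but since $T_m(\Z)$ is a group, this does not affect the orbit.
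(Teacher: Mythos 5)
Your proposal is correct and matches the paper's intended argument: the paper simply states that \Cref{prop:matr_spaces} follows from \Cref{prop:bruhat_cell}, and your write-up supplies exactly the missing dictionary (standard enhancements on $\Q^m$ and $\Q^n$, unitriangular integer matrices being unitriangular over $\Q$ and hence giving admissible bases of the enhanced spaces). The point you flag about the $B$ versus $B^{-1}$ convention is indeed immaterial since $T_m(\Z)$ is a group, so nothing is missing.
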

As a corollary one gets that at least one non-zero entry of $R$ is integer. It
also follows that there is at least one pair of indices $(i,j)$ s.t. the entry
$M'_{i,j}$ is the same for any $M'$ from the mentioned orbit.

\subs{Geometric approach to enhancements, dual enhancement and enhancements on a
  subspaces and quotient vector space}

We recall that for an affine subspace $A$ in a vector space $V$ a set
$\{a-b|a,b\in A\}$ is a vector subspace of $V$ associated with $A$, it is called
the direction of $A$. We will denote this vector subspace by $\mathrm{dir} A$.

\begin{defin} An enhancement of a vector space $V$ is a subset of $V$, which we
  will also call $\kappa$, such that: $\kappa$ is a disjoint union of affine
  subspaces $A^i$ ($A^i\cap A^j=\emptyset$ for $i \ne j$) of dimension $i-1$
  for $i\in\{1,\dots,\dim V\}$ and such that $0 \notin \kappa$ and
  $\mathrm{dir} A^{i+1} = \mathrm{span}(A^i)$ for each $i \in \{ 1, \dots, \dim V \}$,
  where $A^{\dim V+1}$ equals to $V$.
\end{defin}

The collection $A^1, \dots, A^{\dim V}$ of affine subspaces is uniquely recovered
from the enhancement $\kappa$. For the corresponding algebraically defined
enhancement $\kappa$ flag spaces are
$V^i = \mathrm{span} (A^i) = \mathrm{dir} A^{i+1}$ and non-zero elements $\kappa^s$
are images of $A^s$ under natural quotient maps $V^s\to V^s/V^{s-1}$. The full
flag $V^1 \subset \ldots \subset V^{\dim V} = V$ of directions of enhancement we
will denote by $\mathrm{dir} \kappa$.

Dual vector space $V^*$ to an enhanced vector space $(V,\kappa)$ is naturally
enhanced as well: a dual enhancement $\kappa^*$ on $V^*$ is, by definition, a
union of the following spaces
\[
\Uni (A^i)=\{p\in V^*| p(A^i)=1 \}.
\]
Note that $\dim \Uni ({A^i}) =\dim V-i-1$.  
Note also that dual flag to 
\[
0= \mathrm{dir} A^1\subset \ldots \subset \mathrm{dir} A^{\dim V}\subset V,
\]
which by definition consists of annihilator spaces to flag spaces
\[
0=\Ann(V)\subset \Ann (\mathrm{dir} A^{\dim V-1})\subset \ldots 
\subset \Ann (\mathrm{dir} A^1) \subset V^*
\] 
coincides with
\[
  0= \mathrm{dir} \Uni (A^{\dim V})\subset \mathrm{dir} \Uni (A^{\dim
    V-1})\subset\ldots \subset \mathrm{dir} \Uni (A^1)\subset V^*.
\]

If $\kappa=\kappa(v)$,  where $v=(v_1,\ldots,v_n)$ is ordered basis of
V,  then $\kappa^*=\kappa(g_n,\ldots,g_1)$ for a dual basis
$\{g_1,\ldots,g_n\}$ of $V^*$ (such that $g_i(v_j)=\delta_{ij}$).

If $L\subset V$ is a linear subspace of an enhanced vector space $(V,\kappa)$ then one
can easily show that the set $\kappa_L=\kappa \cap L$ is an enhancement on the
vector space $L$. Hence, for an injective linear map $i\colon N \to V$ the
preimage $i^{-1}(\kappa)$ is also an enhancement, and we denote it by
$i^*\kappa$.

A quotient vector space $\sfrac{V}{L}$ of an enhanced vector space $(V,\kappa)$ by a subspace
$L$ is also enhanced vector space with an enhancement $\kappa_{/L}$ defined by the
following construction.For a natural quotient map
$\psi \colon V \to \sfrac{V}{L}$ the dual map
$\psi^*\colon (\sfrac{V}{L})^* \to V^*$ is an injection. The vector space $V^*$ is
enhanced with the dual enhancement $\kappa^*$, hence one can consider the
enhancement $(\psi^*)^*\kappa$ on $\sfrac{V}{L}^*$. The dual enhancement
$((\psi^*)^*\kappa)^*$ is an enhancement on the vector space $(\sfrac{V}{L})^{**}$
which we will identify with $\sfrac{V}{L}$ and the corresponding enhancement
denote by $\kappa_{/L}$ and we call it as enhancement $\kappa$ quotient by~$L$.
 
\subs{Construction of enhancement from flag and enhancement}
\label{subs:geom_enh}
We say that two
enhancements $\kappa_1$ and $\kappa_2$ (on the same vector space) are parallel
iff flags $\mathrm{dir} \kappa_1$ and $ \mathrm{dir} \kappa_2$ coincide. Note
that a set of all enhancements parallel to a given one is naturally a torus
$(\F\setminus \{ 0\})^{\dim V}$. Namely, for every pair $(\kappa_1,\kappa_2)$ of
parallel enhancements we construct an ordered collection of non-zero numbers
$\Lambda(\kappa_1,\kappa_2)=(\lambda_1(\kappa_1,\kappa_2),\ldots,\lambda_{\dim
  V}(\kappa_1,\kappa_2))$, where each number
$\lambda_i=\lambda_i(\kappa_1,\kappa_2)$ is given by the relation
$\kappa^i_1=\lambda_i \kappa^i_2$.
 
The main construction of this section is the following. We will correspond to
pair $(f,\kappa)$ consisting of a full flag
$f=W^1\subset\ldots\subset W^{\dim V}$ on $V$ and an enhancement $\kappa$ on $V$
an enhancement $\chi = \chi(f,\kappa)$ on $V$ such that $\mathrm{dir}\chi=f$. It
is given by the following inductive procedure, we will construct affine spaces
$B^1\subset W^1$, $B^2\subset W^2$,... step by step. Denote by
$A^1,...,A^{\dim L}$ a collection of affine subspaces, generating $\kappa$. Let
$\tilde A^i = {\Uni (A^{\dim V -i+1})}$. Let us construct a point $B^1$ --
geometrically it is the intersection of $\kappa$ and $W^1$, but we define it in
the following way: consider the smallest $k_1$ such that
$\tilde A^{k_1}|_{W_1}\ne 0$ and let
\[
B^1=\{x\in W^1| p(x)=1 \forall p\in \tilde  A^{k_1}|_{W^1}\}.
\]
To construct $B^2$ we take smallest $k_2$ such that
$\tilde A^{k_1}\wedge \tilde A^{k_2}|_{W^2}\ne \{0\}$ (exterior product
of sets is a set of all exterior products of elements from initial sets)
\[
  B^2=\{x\in W^2| p(x)=1 \forall p\in \tilde A^{k_1}|_{W^2}\cup \tilde
  A^{k_2}|_{W^2} : p|_{B^1}=0\}.
\]
To construct $B^{i+1}$ we take smallest $k_{i+1}$ such that
$\tilde A^{k_1}\wedge \tilde A^{k_2}\wedge\ldots \wedge\tilde
A^{k_{i+1}}|_{W^{i+1}}\ne 0$ and
\[
  B^{i+1}=\{x\in W^{i+1}| p(x)=1 \forall p\in \tilde A^{k_1}|_{W^{i+1}}\cup
  \tilde A^{k_2}|_{W^{i+1}}\cup\ldots\cup \tilde A^{i+1}|_{W^{i+1}} :
  p|_{B^1\cup B^2\cup\ldots \cup B^{i}}=0\}.
\]
Each $B^i$ is an affine space of the dimension $I-1$ and the set
$B^1\cup\ldots \cup B^{\dim V}$ is the desired enhancement $\chi(f,\kappa)$.
Also we get a permutation $\sigma(f,\kappa)= (k_1,\ldots,k_{\dim V})$.

\begin{rem} If $\mathrm{dir}\kappa = f$ then $\chi(f,\kappa)=\kappa$. If the
  field $\F$ is $\R$ or ${\mathbb C}$ and $\dim V> 1$ then the map
\[
(f,\kappa)\mapsto \chi (f,\kappa)
\]
is discontinuous, even functions
$\lambda_i(\kappa_1,\chi (\mathrm{dir}\kappa_1,\kappa_2))$ are discontinuous,
but their product
\[
  \lambda_1(\kappa_1,\chi (\mathrm{dir}\kappa_1,\kappa_2))\cdot\ldots\cdot
  \lambda_{\dim V}(\kappa_1,\chi (\mathrm{dir}\kappa_1,\kappa_2))
\]
is a continuous function -- its value is the determinant of any isomorphism $I$
such that $I(\kappa_1^s)=\kappa_2^s$ for any $s$.
\end{rem}

\subs{Enhancements and grassmanians and determinant} Suppose $V$ is a
vector space with two enhancements $\kappa, \chi$ then for every subspace $L
\subset V$ we get enhancements$\kappa_L, \chi_L$ on $L$ and we can
construct numbers $\lambda_1(\kappa_L, \chi_L),\ldots,\lambda_{\dim
L}(\kappa_L, \chi_L)$ and permutation
$\sigma(\mathrm{dir}\kappa_L,\chi_L)$  Consider  product

\[
d(\kappa_L,\chi_L)=(-1)^{\sigma(\mathrm{dir}\kappa_L,\chi_L)}\lambda_1(\kappa_L,
\chi_L)\ldots \lambda_{\dim L}(\kappa_L, \chi_L).
\]

By construction this function has only non-zero values. This function
has meaning of ordinary determinant in the following sense.  Consider
vector space $V$ with ordered basis $v$ and vector space $U$ with ordered basis $u$.
Then vector space $V\oplus U$ is endowed by ordered basis $(v,u)$ and by
ordered basis $(u,v)$ and hence one can construct enhancements
$\kappa(v,u)$ and $\kappa(u,v)$.  Then for any isomorphism $A\colon V
\to U$ its determinant equals  to $d_{\kappa(v,u)_L,\kappa(u,v)_L}$
for $L\subset V\oplus U$ being a graph of $A$.  If the map $A\colon V
\to U$ is not an isomorphism we still get a non-zero number and for 0
operator this number is $1$.   

\section{Enhanced complexes}
\label{sec:enh_comp}
In this section we define and study enhanced complexes~--- an algebraic object
which will carry a certain information about a strong Morse function (see
\Cref{subs:enh_comp_morse}). All the constructions lie within the scope of
homological algebra of chain complexes over a field. They are similar in spirit
to those in \Cref{sec:enh_spaces}. The purpose is that enhanced complex is a
useful algebraic container that stores some information about a strong Morse
function, see \Cref{sec:morse}.

\subs{Definition of an enhanced complex}
\label{subs:def_enh_comp}
In this subsection we define the object of study of this section.

\begin{defin}
\label{def:enh_complex}
Let $\C$ be a (chain) complex of vector spaces,
\[
 C_{n+1} = 0 \to C_n \xto{\d_n} \dots \xto{\d_1} C_0 \to 0 = C_{-1}.
\]
An \emph{enhancement $\kappa$ on a complex} $\C$ is an enhancement
$(\C_\bullet,\kappa)$ on a vector space
$\C_\bullet \colonequals \oplus_{j=0}^{n} C_j$ satisfying the condition that
each $\C^s_\bullet$ is a subcomplex of $\C$ (we will therefore write $\C^s$
instead of $\C^s_\bullet$ in order to stress the structure of a complex). A
complex with an enhancement will be called an \emph{enhanced complex} and
denoted as $(\C,\kappa)$.
\end{defin}
We call the number $n$ the dimension of $\C$ and denote it by $\dim \C$.

\begin{rem}
\label{rem:enh_c_k}
  \begin{enumerate}[label=\arabic*.]
  \item Recall that the aforementioned condition amounts to the following two:
      \begin{enumerate*}[label=\arabic*)]
      \item $\C^s$ is decomposed into the direct sum of graded components
        $\oplus_k C^s_k$ s.t. $ C^s_k \subset C_k$ for
        $k \in \{ 0, \dots, n \}$;
      \item $\d_k ( C^s_k ) \subset C^s_{k-1}$ for $k \in \{ 1, \dots, n \}$.
      \end{enumerate*}
  \item By \Cref{con:inj_enh} the vector space $C_k$ is also enhanced.
  \end{enumerate}
\end{rem}

\begin{rem}
  \label{rem:grading}
  For an enhanced complex $(\C,\kappa)$ the set $\{1,\dots,\dim \C_\bullet\}$ is
  $\Z_{\geq 0}$-graded: the degree $\deg s$ of $s$ is given by the only degree
  in which the complex $\sfrac{\C^s}{\C^{s-1}}$ is non-zero.
\end{rem}

\begin{defin}
  Let $(\C,\kappa)$ and $(\mathcal D,\mu)$ be two enhanced complexes with
  $\dim \C_\bullet = \dim {\mathcal D}_\bullet$. We say that they are
  \emph{isomorphic} (and use the symbol $\simeq$ for this) if there exist an
  isomorphism of complexes $\phi \colon \C \xto{\sim} \mathcal D$ s.t. the
  induced map $\C_\bullet \to {\mathcal D}_\bullet$ is the isomorphism on
  enhanced vector spaces.

\begin{comment}
  s.t.
\begin{enumerate*}[label=\arabic*)]
\item $\phi(\C^s)= {\mathcal D}^s$;
\item $\tilde{\phi_s}(\kappa_s)=\mu_s$, where
\end{enumerate*} %
%
\[ \tilde{\phi_s} \colon \sfrac{\C^s}{\C^{s-1}} \to \sfrac{{\mathcal
      D}^s}{{\mathcal D}^{s-1}} \] is a map of quotient complexes induced by
$\phi$.
%
\end{comment}
\end{defin}

\begin{rem}
  As we will show in \Cref{sec:morse}, given a strong Morse function and
  suitable orientations one can construct an enhanced complex, which is
  well-defined up to isomorhism.
\end{rem}

\subs{Enhancement on $\H_\bullet(\C)$}
\label{subs:enhancement_on_h}  
In this subsection we construct enhancement on a homology of a certain class of
complexes, which includes enhanced ones.

For any complex $\C$ denote by $\H_\bullet(\C)$ the direct sum
$\oplus_j \H_j(\C)$. Let $(\C, \kappa)$ be an enhanced complex. Then for any $s$
the vector space $\H_\bullet(\C^s, \C^{s-1})$ is one-dimensional with a
preferred generator of degree $\deg s$ given by a class of relative chain
$\kappa_s \in \sfrac{\C^s} {\C^{s-1}}$.
\begin{con}
\label{con:enhancement_on_h}
Let $\C$ be a filtered (possibly infinite-dimensional) complex over a field
$\F$, $0 = \C^0 \subset \ldots \subset \C^N = \C$. Suppose that for any $s$
vector space $\H_\bullet(\C^s, \C^{s-1})$ is one-dimensional with a chosen
generator $h_s$. We will now construct an enhancement on a homology vector space
$\H_\bullet(\C)$.

\triright First, we will construct a filtration on $\H_\bullet(\C)$. For
$s \in \{ 0, \dots, N\}$ let $\iota_s \colon\H_\bullet(\C^s) \to \H_\bullet(\C)$
be a map induced by inclusion. Define the subset $H$ (which stands for homology)
of the set $\{1, \dots, N \}$ to be the set of all $s$ s.t.
$\dim \im \iota_s = \dim \im \iota_{s-1}+1$. Let $s_i$ be the $i^{\text{th}}$
element of $H$ (counting from 1) and $s_0$ be zero. The sequence of subspaces
$0 = \im \iota_{s_0} \subset \im \iota_{s_1} \subset \ldots \subset \im
\iota_{s_{\dim \H_\bullet(\C)}} = \H_\bullet(\C)$ is a full flag on
$\H_\bullet(\C)$ (follows from considering the exact sequence of a pair
$(\C^s, \C^{s-1})$ for all $s$). To complete the construction of enhancement we
will now produce an element from $\sfrac{\im \iota_{s_i}}{\im \iota_{s_{i-1}}}$
for a given $i \in \{ 1, \dots, \dim \H_\bullet( \C ) \}$. We denote $s_i$ by
$s$ for convenience. Let $\deg s$ denote the degree in which the graded vector space
$\H_\bullet(\C^s, \C^{s-1})$ is non-zero (this notation is coherent with the
case when $\C$ is an enhanced complex).

Consider the following diagram, with horizontal line being a portion of a long
exact sequence of a pair $(\C^s,\C^{s-1})$:
\[
  \begin{tikzcd}
    \H_{\deg s}    (\C^{s-1}) \arrow[r, "\theta"] \arrow[dr, "\iota_{s-1}"] &
    \H_{\deg s}    (\C^s)     \arrow[r, two heads, "p_*"] \arrow[d, "\iota_s"] &
    \H_{\deg s}    (\C^s, \C^{s-1}) \\ 
    & \H_\bullet   (\C) & h_s \arrow[u, symbol=\in]
  \end{tikzcd}
\]
We write $\iota_s$ both for a map $\H_\bullet(\C^s) \to \H_\bullet(\C)$ and for
its restriction to $\H_{\deg s}(\C^s)$. It follows from the definition of $H$
that $\dim \coker \theta = 1$, therefore $\ker p_*$ is a proper subspace of
$\H_{\deg s}(\C^s)$, which in turn implies that $p_*$ is surjective. Denote by
$p_*^{-1}(h_s)$ any preimage of $h_s$; it is defined up to elements from
$\ker p_*\simeq \im \theta$. Finally, the desired element is a class of
$\iota_s (p_*^{-1}(h_s))$ in the quotient space
$\sfrac{\im \iota_s}{\im \iota_{s-1}} \simeq \sfrac{\im \iota_s}{\im
  \iota_{s_{i-1}}}$ (mind that $\im \iota_{s_i-1} = \im \iota_{s_{i-1}}$); it is
well-defined. \trileft
\end{con}

\begin{rem}
  \label{rem:enh_on_h}
  Note that to a flag space of $\H_\bullet(\C)$ of dimension $d$ one can
  associate a number $s \in \{1, \dots, N \}$ as the unique solution of equation
  $\dim \im \iota_s = \dim \im \iota_{s-1} + 1 = d$. In other words, this is the
  smallest $s$ such that given flag space is contained in the image of
  $\iota_s$.

  If one is given a non-zero vector $v$ in the enhanced vector space
  $\H_\bullet(\C)$ then they may consider the flag space of least possible
  dimension containing $v$ (its dimension is $\Ht(v)$). Combining this with the
  previous paragraph one can associate a number $s$ with a vector $v$. We will
  make use of this association in \Cref{subs:bd}.
\end{rem}

For a detailed treatment of the mentioned long exact sequence see \cite{Lau}.
Without taking $\kappa_s$ into account it was first considered in \cite{Vit1}.
This preferred generator appeared independently in \cite{Vit2}. We denote
obtained enhancement as $(\H_\bullet(\C),\kappa_\H)$. Specializing the above
discussion to a fixed degree $k$ (and thus having $\deg s_i=k$) we get an
enhanced vector space $(\H_k(\C),\kappa_{\H_k})$; one may check that this is the
same enhancement as the one induced by inclusion $\H_k(\C) \into \H_\bullet(\C)$
via \Cref{con:inj_enh}. Note that the above procedure also gives enhancement on
$\H_\bullet(\C^s)$ for any $s$ (it will be crucial in what follows).

\begin{rem}
  \label{rem:fin_dim}
  The reason for the chosen level of generality is that one may take the input
  to be a complex of singular chains on a manifold equipped with a Morse
  function. The filtration is then given by the sublevel sets. See
  \Cref{subs:enh_comp_morse}, where B-data (described below in
  \Cref{subs:bd}) is extracted from the function this way.
\end{rem}

\begin{rem}
  In the case when $\C$ is an enhanced complex one may check that the following
  alternative construction produces the same enhancement on $\H_\bullet(\C)$.
  Consider the map $\d:\C_\bullet \to \C_\bullet$. Induce enhancements on
  $\ker \d$ and $\im \d$ via \Cref{con:inj_enh}. Induce enhacement on
  $\sfrac{\ker \d}{\im \d} = \H_\bullet (\C)$ via \Cref{con:surj_enh}.
\end{rem}

\subs{B-data}
\label{subs:bd}
In this subsection we introduce a certain data extracted from an enhanced
complex. This data is invariant under isomorphisms. In
\Cref{subs:classification} we show that this data is in fact a full invariant of
an enhanced complex considered up to isomorphism. The letter B stands
simultaneously for Barannikov, Bruhat and barcode.
\begin{wrapfigure}{R}{2cm}
  \includegraphics[width=2cm]{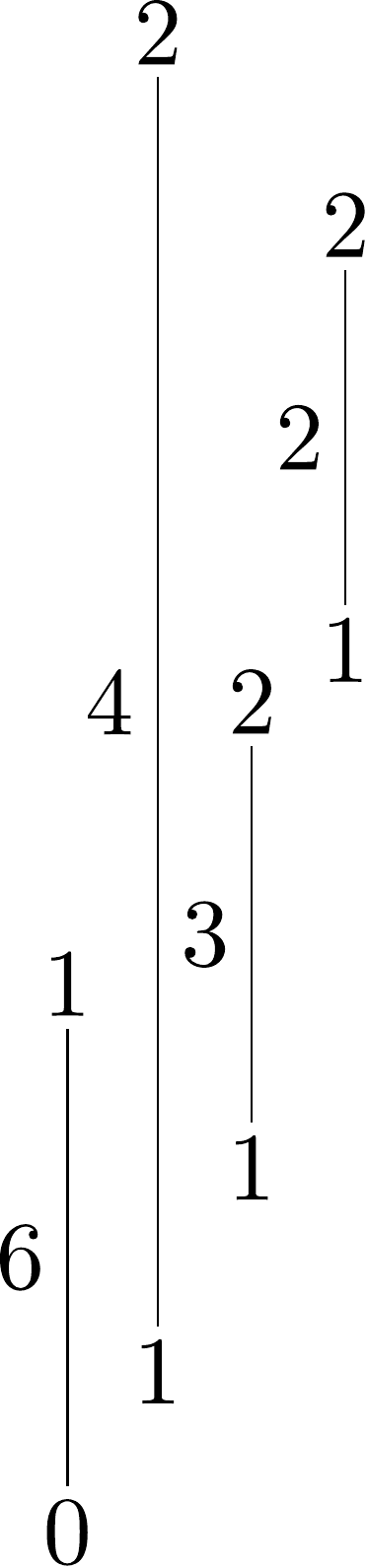}
  \caption{}
  \label{fig:b_data_example}
\end{wrapfigure}

\begin{rem}
  \label{rem:bd_gen}
  The extraction of B-data is done in the same way for the (more
  general) case of a complex $\C$ which satisfies conditions of
  \Cref{con:enhancement_on_h} (compare \Cref{rem:fin_dim}). Indeed, one has to
  merely replace the symbol $[\kappa_s]$ with $h_s$. In this case, however, we
  don't claim that this data is a full invariant; we don't even define any
  equivalence relation on the set of such complexes.
\end{rem}

We will now describe the B-data. It consists of several parts:
\begin{enumerate}[label=\roman*)]
\item \label{dataFirst} A non-negative integer $N$ along with a
  $\Z_{\geq 0}$-grading on a set $\{1, \dots, N \}$, denoted by $\deg$;
\item Decomposition of $\{ 1, \dots, N \}$ into the union of three disjoint sets
  $U, L, H$ (these letters stand for upper, lower and homological, for the 
  reasons described below);
\item Bijection $b \colon U \xto{1-1} L$ of degree $-1$ w.r.t. the grading. Map $b$
  must satisfy $b(s)<s$; 
\item \label{dataLast} A function $\lambda \colon U \to \F^*$. We write
  $\lambda_s$ for its value on $s \in U$.
\end{enumerate}
We call the image of $\lambda$ ``Bruhat numbers'' of an enhanced complex (see
the proof of \Cref{thm:classification} for the explanation). Two numbers $s$ and
$b(s)$ are said to form a Barannikov pair (or simply a pair). It's convenient to
think of each Bruhat number as being ``written'' on a Barannikov pair. Roughly
speaking, B-data is a decomposition of some subset of $\{ 1, \dots, N \}$ into
Barannikov pairs (the rest of the elements are homological). Each pair consists
of an upper element, a lower one and carries a Bruhat number. In other words,
B-data is a finite sequence of rook matrices $\{ R_k \}$ over $\F$, where $R_k$
is of size $( \# \{s | \deg s = k - 1 \} ) \times ( \# \{ s | \deg s = k \} )$
and $R_{k-1}R_k = 0$.

\Cref{fig:b_data_example} gives an example of B-data over $\Q$ and describes
pictorial format which we will use in future. Elements of the set
$\{1, \dots, N\}$ are drawn from bottom to top, pairs correspond to segments.
Either to the left or to the right of a segment we write a Bruhat number. The
degree of an element is written either above or below this element, whatever is
more convenient. In the example $N=8$, degree of $1$ is $0$, degree of $2,3,4$
and 6 is $1$ and degree of $5,7,8$ is 2. Next, $U = \{ 4,5,7,8 \}$,
$L = \{ 1,2,3, 6 \}$, $H = \emptyset$. Bruhat numbers are $6, 3, 2, 4$ (i.e.
values of $\lambda$ on 4,5,7,8 respectively). The map $b$ is defined by the
segments. Finally, two rook matrices are
\[
  R_1 =
  \begin{pmatrix}
    0 & 0 & 6 & 0 \\
  \end{pmatrix},
 R_2 = 
\begin{pmatrix}
    0 & 0 & 4 \\
    3 & 0 & 0 \\
    0 & 0 & 0 \\
    0 & 2 & 0 \\
  \end{pmatrix}.
\]

We will now show how to extract such a data from an enhanced complex
$(\C, \kappa)$. In future we will refer to it as a B-data of
$(\C, \kappa)$ and use the same letters $N, U, L, H, b$ and $\lambda$ for its
ingredients when necessary. We continue using notations introduced in
\Cref{subs:def_enh_comp,subs:enhancement_on_h}. Let
$\delta \colon \H_{\deg s}(\C^s, \C^{s-1}) \to \H_{\deg s - 1}(\C^{s-1})$ be the
connecting homomorphism. To begin with, set $N$ to be the number of filtration
components (counting from zero) and set the grading on $\{1, \dots, N\}$ to be
the one defined in \Cref{rem:grading}. Now for each $s \in \{ 1, \dots, N \}$
s.t. $\delta([\kappa_s])\neq 0$ do the following.
\begin{enumerate}[label=\arabic*)]
\item Put $s$ in $U$.
\item For any $t \in \{ 1, \dots, s-1 \}$ let
  $\iota_t^{s-1} \colon \H_{\deg s}(\C^t) \to \H_{\deg s}(\C^{s-1})$ be the map 
  induced by inclusion. Now choose the only $t$ s.t.
  $\dim \im \iota_t^{s-1} = \dim \im \iota_{t-1}^{s-1} + 1 =
  \Ht(\delta([\kappa_s]))$ (the function $\Ht(\cdot)$ here is taken w.r.t. the
  enhancement on $\H_{\deg s - 1 }(\C^{s-1})$ constructed from $\C^{s-1}$). See
  \Cref{rem:enh_on_h} for an informal meaning of such a $t$. Put this $t$ in
  $L$.
\item Set $\lambda(s)\colonequals \cf(\delta([\kappa_s]))$ w.r.t. the same
  enhancement.
\item Set $b(s)\colonequals t$.
\end{enumerate}
Using diagram chasing similar to that in \Cref{con:enhancement_on_h} one
verifies that such an operation is well-defined in a sense that, first, each
number will be put somewhere at most once and, second, that $b$ enjoys desired
properties, see \cite{Lau}. Those numbers in $\{ 1, \dots, N \}$ which were not
put anywhere by this operation are put in $H$. The extraction of B-data
\ref{dataFirst}-\ref{dataLast} is over, it's plain to see that it's invariant
under isomorphism.
\begin{rem}
\label{rem:bd_class_space}
  Here is another way to extract a B-data out of an enhanced complex. Apply
  \Cref{lem:class_space} to a differential $\d_k \colon C_k \to C_{k-1}$ viewed
  as a map between enhanced vector spaces (see \Cref{rem:enh_c_k}). This would
  give a sequence $\{ R_k \}$ of rook matrices and thus we're done. Although
  this way is shorter, we find it less instructive.
  
  Yet another approach is to use spectral sequence of a filtered (actually,
  enhanced) complex.
\end{rem}

Again, arguing as in \Cref{con:enhancement_on_h} one can show that
$\# \{ s \in H | \deg s = k \} = \dim \H_k(\C)$. We stress that everything
except $\lambda$ was essentially constructed in \cite{Bar}, while homological
language was first used in \cite{Vit1}. A close idea of construction of Bruhat
numbers over $\Q$ appeared independently in \cite{Vit2}. 

\begin{rem}
Consider once again the portion of a long exact sequence of a pair $(\C^s,
\C^{s-1})$:

\begin{center}
  \begin{tikzcd}
    \H_{\deg s}    (\C^s)     \arrow[r, "p_*"] &
    \H_{\deg s}    (\C^s, \C^{s-1}) \arrow[r, "\delta"] & 
    \H_{\deg s - 1}(\C^{s-1}). \\
  \end{tikzcd}
\end{center}
Since the middle term is one-dimensional, there are two cases possible:
\begin{enumerate}[label=\arabic*)]
\item The map $p_*$ is surjective whilst $\delta$ is zero. This case was used in
  \Cref{con:enhancement_on_h}.
\item The map $\delta$ is injective whilst $p_*$ is zero. This case was used in
  the extraction of B-data.
\end{enumerate}
\end{rem}

\begin{rem}
  \label{rem:4term}
  Let $s$ and $t$ be some elements from $\{ 1, \dots, N \}$. They form a
  Barannikov pair (i.e. $b(s) = t$) if and only if the following equations hold:
  \[
  \dim \H_\bullet(\C^{s-1}, \C^t) =
  \dim \H_\bullet(\C^s, \C^{t-1}) =
  \dim \H_\bullet(\C^{s-1}, \C^{t-1}) - 1 =
  \dim \H_\bullet(\C^s, \C^t) -1.
  \]
  This can be proven either by diagram chasing similar to that in
  \Cref{con:enhancement_on_h} or by \Cref{thm:classification} (see \cite{PC} and
  also \cite{Vit1}).
\end{rem}

We will now take coordinate viewpoint, which will be useful in formulation of
the classificational \Cref{thm:classification} in \Cref{subs:classification}. By
a basis of a chain complex $\C$ we will mean a basis
$(c_1,\dots,c_{\dim\C_\bullet})$ of a vector space $\C_\bullet$ s.t. each $c_s$
belongs to some $C_k$, where $k$ depends on $s$. By a basis of an enhanced
complex $(\C,\kappa)$ we will mean a basis $c$ of a chain complex $(\C,\d)$ s.t.
$(\C_\bullet,\kappa)\simeq (\C_\bullet,\kappa(c))$ (see \Cref{def:enh_complex}).
Every enhanced complex can be equipped with a basis.

Vice versa, given a basis $c$ of a chain complex $\C$ s.t. the span
$\langle c_1, \dots, c_s \rangle$ is a subcomplex (for each
$s \in \{1, \dots, \dim \C_\bullet \}$) one may construct an enhanced complex
$(\C, \kappa(c))$ by declaring
$(\C_\bullet, \kappa) \colonequals (\C_\bullet, \kappa(c))$.

\begin{rem}
  Note that two enhanced complexes $(\C, \kappa)$ and $(\mathcal{D}, \mu)$ are
  isomorphic if and only if the following holds:
\begin{enumerate}[label=\arabic*)]
\item $\dim \C_\bullet = \dim \mathcal{D}_\bullet$ and the gradings on
  $\{1, \dots, \dim \C_\bullet \}$ constructed from $(\C, \kappa)$ and
  $(\mathcal{D}, \mu)$ coincide (see \Cref{rem:grading}),
\item there exist bases $c$ of $(\C, \kappa)$ and $d$ of $(\mathcal{D}, \mu)$
  s.t. corresponding two matrices of differentials coincide (compare
  \Cref{rem:equiv_bases}).
\end{enumerate}
\end{rem}

\begin{defin}
  Let $(\C,\kappa)$ be an enhanced complex and $c$ be its basis. We call $c$ a
  \emph{Barannikov basis} if the matrix of differential
  $\d_k \colon C_k \to C_{k-1}$ in the basis $c$ is a rook matrix (for any $k$).
\end{defin}
  
The set of rook matrices $\{ R_k \}$ of differenials $\d_k$ doesn't depend on a
particular choice of a Barannikov basis. Indeed, this set is precisely the
B-data, which is an invariant of an enhanced complex. The purpose of
\Cref{subs:classification} is to show that every enhanced complex admits a
Barannikov basis. This will imply that B-data is a full invariant of an enhanced
complex considered up to isomorphism.

\subs{Classification of enhanced complexes}
\label{subs:classification}
In this subsection we prove classificational
\begin{thm}
  \label{thm:classification}
  For every enhanced complex $(\C,\kappa)$ there exists a Barannikov basis $c$.
  Moreover, the matrix of $\d$ is the same for any Barannikov basis.
\end{thm}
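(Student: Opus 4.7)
The plan is to prove existence by induction on $N=\dim\C_\bullet$, extending a Barannikov basis $c_1,\dots,c_{N-1}$ of $\C^{N-1}$ (given by the inductive hypothesis) by a new element $c_N$; the uniqueness of the matrix of $\d$ will then follow rapidly from \Cref{lem:class_space}. The base case $N=0$ is trivial. For the inductive step, first pick any lift $c_N'$ of $\kappa_N$. Then $\d(c_N')$ is a cycle in $\C^{N-1}$ of degree $\deg N-1$ whose class in $\H_{\deg N-1}(\C^{N-1})$ equals $\delta([\kappa_N])$; this data splits the argument into two cases according to whether $N$ lies in $U$ or in $L\cup H$.

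If $\delta([\kappa_N])=0$ (covering $N\in L\cup H$), then $\d(c_N')$ is a boundary, say $\d(c_N')=\d(y)$, and we set $c_N\colonequals c_N'-y$ so that $\d(c_N)=0$. If $\delta([\kappa_N])\neq 0$, then $N\in U$ with $t\colonequals b(N)$ and Bruhat number $\lambda_N$. The Barannikov structure on $\C^{N-1}$ ensures that cycles in degree $\deg t$ are spanned by $\{c_s:s\in L\cup H,\,\deg s=\deg t\}$, while \Cref{con:enhancement_on_h} identifies $\{[c_s]:s\in H\}$ with a basis of the induced enhancement on $\H_\bullet(\C^{N-1})$. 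Combining the two decompositions yields
\[
 \d(c_N') \,=\, \lambda_N\, c_t + \sum_{\substack{s\in H,\, s<t\\ \deg s=\deg t}} \nu_s\, c_s + (\text{terms indexed by } L),
\]
where $\lambda_N$ and the $\nu_s$ are forced by $\Ht$ and $\cf$ applied to $\delta([\kappa_N])$. The plan is then to perform two surgeries: first replace $c_t$ by $c_t^{\mathrm{new}}\colonequals c_t+\sum(\nu_s/\lambda_N)\,c_s$, which absorbs the lower-$H$ terms, and second subtract from $c_N'$ a suitable combination of $c_{s'}$ with $s'\in U$, $s'<N$, $\deg s'=\deg N$ to cancel the $L$-terms. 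Setting $c_N$ to be the result yields $\d(c_N)=\lambda_N\, c_t^{\mathrm{new}}$.

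The main (mild) obstacle is checking that these surgeries do not spoil the Barannikov property already achieved on $\C^{N-1}$. The critical observation is that $b$ is a bijection with $b(N)=t$, so no index $u<N$ has $b(u)=t$; hence $c_t$ appears in no earlier rook entry $\d c_u=\lambda_u c_{b(u)}$, and replacing $c_t$ by $c_t^{\mathrm{new}}$ therefore leaves the matrix of $\d|_{\C^{N-1}}$ unchanged. Since $c_t^{\mathrm{new}}$ remains a cycle and still represents $\kappa_t$ in $\sfrac{\C^t}{\C^{t-1}}$ (the added terms lie in $\C^{t-1}$ and are cycles), the updated collection is still a basis of the enhanced subcomplex, and in $\C^N$ the single new column and the new row at $c_t^{\mathrm{new}}$ each contain exactly one non-zero entry $\lambda_N$. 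Hence every $\d_k$ is a rook matrix.

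For uniqueness of the matrix of $\d$, apply \Cref{lem:class_space} to each $\d_k\colon C_k\to C_{k-1}$ viewed as a map between the enhanced vector spaces induced from $(\C_\bullet,\kappa)$ via \Cref{rem:enh_c_k}: it supplies a unique rook matrix $R_k$ depending only on these enhancements. Since any Barannikov basis realises $\d_k$ as some rook matrix, this matrix must coincide with $R_k$, so the matrix of $\d$ is independent of the chosen Barannikov basis.
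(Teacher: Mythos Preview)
Your proof is correct, but it takes a genuinely different route from the paper's own argument for existence. The paper works ``horizontally'': for each degree $k$ it applies \Cref{lem:class_space} twice to obtain two bases of $(C_k,\kappa_k)$---one adapted to $\d_k$ as source, one adapted to $\d_{k+1}$ as target---and then merges them by choosing, for each $s$, the basis vector coming from whichever of the two rook matrices has a non-zero entry in position $s$ (the three cases corresponding exactly to $U$, $L$, $H$). No induction on the filtration is used. Your argument instead works ``vertically'', climbing the filtration one step at a time and performing explicit surgeries on the provisional basis; this is closer in spirit to Barannikov's original construction and makes the interaction between the B-data and the basis very transparent, while the paper's approach is shorter and leverages \Cref{lem:class_space} more directly as a black box. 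Your uniqueness argument via \Cref{lem:class_space} applied to each $\d_k$ is essentially the same as the paper's (which phrases it as invariance of the B-data, cf.\ \Cref{rem:bd_class_space}).
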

\begin{rem}
  \label{rem:thm_classification}
  \begin{enumerate}[label=\arabic*.]
  \item Barannikov basis itself need not be unique (compare
    \Cref{rem:bases_not_unique}).
  \item Put differently, one may say that B-data is a full invariant of
    an enhanced complex considered up to isomorphism (see \Cref{subs:term_dig}).
  \item It is profitable to have a Barannikov basis at hand, since the complex
    takes the simplest form possible and becomes tractable.
  \item The case when the complex is not enhanced, but only filtered, was proven
    in \cite{Bar}. See also \Cite{Meln,Thij1,Thij2} for an ``ungraded'' setting
    where a single upper triangular matrix is considered.
  \end{enumerate}
\end{rem}

For a matrix $X$ we denote by $X_{\bullet, j}$ its $j^{\text{th}}$ column
and by $X_{i, \bullet}$ it's $i^{\text{th}}$ row.

\begin{proof}[Proof of \Cref{thm:classification}]
  Uniqueness follows from the existence and the fact that B-data is
  invariant under isomorphisms (which is shown in \Cref{subs:bd}). The rest is
  devoted to proving the existence part.

  Fix any $k \in \{0, \ldots, \dim \C\}$. The differential
  $\d_k:C_k \to C_{k-1}$ is a map of enhanced vector spaces $(C_k, \kappa_k)$
  and $(C_{k-1}, \kappa_{k-1})$ (see \Cref{rem:enh_c_k}). Therefore,
  \Cref{lem:class_space} produces, in particular, a rook matrix $X$ and a basis
  $x$ of $(C_k, \kappa_k)$. Analogously, applying \Cref{lem:class_space} to
  $\d_{k+1}$ one obtains a rook matrix $Y$ and another basis of
  $(C_k, \kappa_k)$, call it $y$. Construct now a third basis $v$ of
  $(C_k, \kappa_k)$ as follows.

  Fix $s \in \{ 1, \dots, \dim C_k \}$. Obviously at least one of three cases
  listed below holds. On the other hand, it follows from $\d^2 = 0$ and proof of
  \Cref{lem:class_space} that all three are mutually exclusive. So, we define
  $v_s$ depending on which of them holds.
  \begin{enumerate}[label=\arabic*)]
  \item $X_{\bullet, s} \neq 0$. Set $v_s \colonequals x_s$.
  \item $Y_{s, \bullet} \neq 0$. Set $v_s \colonequals y_s$.
  \item Both $X_{\bullet, s}$ and $Y_{s, \bullet}$ are zero. One is free to take
    either $x_s$ or $y_s$ as $v_s$.
  \end{enumerate}
  We have a constructed a basis of $(C_k, \kappa_k)$ for each $k$. The last step
  is to construct a basis $c$ of $(\C, \kappa)$. Take any
  $s \in \{ 1, \dots, \dim \C_\bullet \}$. Let $v$ be a constructed basis of
  $(\C_{\deg s}, \kappa_{\deg s})$. Define $c_s$ to be $v_{\dim C_{\deg s}^s}$
  (see \Cref{def:enh_complex}). Finally, by construction $c$ is a Barannikov
  basis.
\end{proof} 

\begin{rem}
  Three mentioned cases correspond respectively to the fact that $s$ belongs
  to \begin{enumerate*}[label=\arabic*)] \item U, \item L, \item
    H \end{enumerate*}.
\end{rem}

\subs{$\Z$-enhanced complexes} \label{subs:Zenh_comp} In this subsection we
introduce a certain analogue of an enhanced complex, which is itself a complex
of free abelian groups.

\begin{defin}
  \label{def:Zenh_comp}
Let $\C$ be a (chain) complex of free abelian groups
\[
 C_{n+1} = 0 \to C_n \xto{\d_n} \dots \xto{\d_1} C_0 \to 0 = C_{-1}.
\]
A \emph{$\Z$-enhancement $\kappa$ on a complex} $\C$ is a choice of the
following two structures.
\begin{enumerate}[label=\arabic*)]
\item A filtration
  \[
    0 = \C^0 \subset \ldots \subset \C^{\rk \C_\bullet} = \C
  \]
  of $\C$ by subcomplexes s.t. for each $s \in \{1, \dots, \rk \C_\bullet \}$
  the quotient complex $\sfrac{\C^s}{\C^{s-1}}$ is isomorphic to $\Z$
  concentrated in one degree.
  \item A generator of $\sfrac{\C^s}{\C^{s-1}} \simeq \Z$.
\end{enumerate}
A complex with a $\Z$-enhancement will be called a \emph{$\Z$-enhanced complex}
and denoted as $(\C,\kappa)$.
\end{defin}

The following notions and statements go in exactly the same manner as in the
honest enhanced case. 
\begin{enumerate}[label=\arabic*)]
\item The definition of an isomorphism between two $\Z$-enhanced complexes.
\item The definition of a basis of a $\Z$-enhanced complex (recall that by a
  basis we always mean a linearly ordered set of generators). Matrix of
  differential $\d_k$ in any basis is obviously integral, yet it will play
  important role in the end of this subsection.
\item Every $\Z$-enhanced complex can be equipped with a basis.
\item Let $c = (c_1, \dots, c_{\rk \C_\bullet})$ be a basis of a complex of free
  abelian groups s.t. \begin{enumerate*}[label=\arabic*)] \item for each $s$ the
    span $\langle c_1, \dots, c_s \rangle$ is a subcomplex, \item the induced
    filtration satisfies the condition 1) from
    \Cref{def:enh_complex}. \end{enumerate*} Then one can construct a
  $\Z$-enhanced complex $(\C, \kappa(c))$.
\end{enumerate}

It follows directly from the definitions that if $(\C,\kappa)$ is a
$\Z$-enhanced complex then $\C \otimes \F$ is an enhanced complex over $\F$. We
denote it by $(\C \otimes \F, \kappa)$.

\begin{rem}
  An oriented strong Morse function on a manifold naturally gives rise to a
  \mbox{$\Z$-enhanced} complex. However, classifying such complexes up to
  isomorphism is a transcendentally hard problem. So, following \cite{Bar}, we
  proceed by tensoring the given complex by $\F$ for various fields. See
  \Cref{subs:enh_comp_morse}.
\end{rem}

The rest of this subsection is devoted to the interplay between properties of
$\Z$-enhanced complex $(\C, \kappa)$ and those of enhanced complex
$(\C \otimes \Q, \kappa)$. So we set $\F = \Q$ for the time being. Similar in
spirit results are stated in \Cref{subs:q} without proofs.

Recall that B-data may be viewed as a sequence of rook matrices $\{ R_k \}$, see
\Cref{subs:bd} and \Cref{rem:bd_class_space}. Recall also that in
\Cref{subs:matr_spaces} we associated a subset $\Ll(R)$ of matrices with a rook
matrix $R$.

\begin{prop}
  \label{prop:matr_comp}
  Let $c$ be any basis of a $\Z$-enhanced complex $(\C, \kappa)$. Then the
  matrix of differential $\d_k$ in this basis belongs to the set $\Ll(R_k)$, where
  $R_k$ is a rook matrix from the B-data of $(\C \otimes \Q, \kappa)$. 
\end{prop}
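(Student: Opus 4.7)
The plan is to deduce this proposition directly from \Cref{prop:bruhat_cell} applied to each $\d_k$ after tensoring with $\Q$. The substance is bookkeeping: one has to verify that a basis of the $\Z$-enhanced complex behaves correctly under tensoring and under restriction to a single degree, after which the linear-algebraic result does the heavy lifting.

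First, I would observe that a basis $c = (c_1, \dots, c_{\rk \C_\bullet})$ of a $\Z$-enhanced complex $(\C, \kappa)$ induces a basis of the enhanced complex $(\C \otimes \Q, \kappa)$ in the sense of \Cref{subs:bd}. Indeed, the filtration $\{\C^s\}$ tensored with $\Q$ gives the flag on $\C_\bullet \otimes \Q$, and the chosen generator of each $\sfrac{\C^s}{\C^{s-1}} \simeq \Z$ becomes a generator of $\sfrac{\C^s \otimes \Q}{\C^{s-1} \otimes \Q} \simeq \Q$, which matches the prescribed $\kappa_s$ (up to the scaling built into the identification). By \Cref{rem:enh_c_k} each $C_k \otimes \Q$ then inherits an enhancement, and the subsequence of $c$ that lies in $C_k$, placed in its inherited order, is a basis of the enhanced vector space $(C_k \otimes \Q, \kappa_k)$ in the sense of \Cref{def:enh_space}.

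Next, the matrix of the differential $\d_k \colon C_k \to C_{k-1}$ expressed in the basis $c$ is integer-valued and coincides entry-by-entry with the matrix of $\d_k \otimes \Q$ expressed in the induced bases of $C_k \otimes \Q$ and $C_{k-1} \otimes \Q$. Applying \Cref{prop:bruhat_cell} to these enhanced vector spaces and bases, this matrix lies in $\Ll(R_k)$, where $R_k$ is the rook matrix supplied by \Cref{lem:class_space} for the map $\d_k \otimes \Q$. By \Cref{rem:bd_class_space} the collection $\{R_k\}$ obtained this way is exactly the collection of rook matrices appearing in the B-data of $(\C \otimes \Q, \kappa)$, which finishes the argument.

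The only real obstacle is the bookkeeping in the first paragraph: one has to check that the enhancement on $C_k \otimes \Q$ induced by unpacking the $\Z$-enhancement on $(\C, \kappa)$ at a fixed degree agrees with the enhancement on $C_k \otimes \Q$ induced from $\C_\bullet \otimes \Q$ via \Cref{con:inj_enh}, and that the ordering of basis vectors is consistent. No deep idea is involved; once these compatibilities are stated explicitly, \Cref{prop:bruhat_cell} and \Cref{rem:bd_class_space} close the argument immediately.
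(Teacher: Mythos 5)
Your argument is correct and is essentially the paper's: the paper reduces the claim to \Cref{prop:matr_spaces} (the matrix of $\d_k$ in any basis lies in the integral unitriangular orbit of the matrix in a fixed basis), which is itself an immediate consequence of \Cref{prop:bruhat_cell}, so you are simply inlining that intermediate step and applying \Cref{prop:bruhat_cell} directly to $\d_k \otimes \Q$. The compatibility checks you flag --- that $\underline{c}_k$ is a basis of the enhanced vector space $(C_k \otimes \Q, \kappa_k)$ and that the resulting $\{R_k\}$ is the B-data via \Cref{rem:bd_class_space} --- are exactly the points the paper's two-line proof leaves implicit, so spelling them out is fine.
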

\begin{proof}
  Let $D$ be a matrix of differential $\d_k$ in some basis $c$ of
  $(\C, \kappa)$. After choosing another basis $c'$ the matrix $D$ gets
  multiplied by a unitriangular matrices (over $\Z$) from the left and from the
  right. The statement now follows from \Cref{prop:matr_spaces}.
\end{proof}
See \Cref{subs:matr_spaces} for an example, which may be treated as a complex
concentrated in two degrees. By a degree of a pair we will mean degree of its
lower point. A Barannikov pair is called short if there are no pairs of the same
degree that lie inside it. Formally, $(s, t)$ is a short pair if there is no
pair $(s', t')$ of the same degree s.t. $s < s' < t' < t$.

\begin{cor}
  \label{cor:short_comp}
  Let $(\C, \kappa)$ be a $\Z$-enhanced complex. Bruhat number of
  $(\C \otimes \Q, \kappa)$ on any short pair is integer.
\end{cor}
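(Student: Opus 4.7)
The plan is to combine \Cref{prop:matr_comp} with a pointwise analysis of the definition of $\Ll(R_k)$ at the matrix entry corresponding to the short pair. Pick any basis of the $\Z$-enhanced complex $(\C, \kappa)$; the matrix $D$ of $\d_k$ in that basis is integral by construction, and at the same time lies in $\Ll(R_k)$ over $\Q$ by \Cref{prop:matr_comp}. Denote a short pair of degree $k-1$ by $(\ell, u)$, so that $b(u) = \ell$, $\deg u = k$, $\deg \ell = k - 1$, and (with rows of $R_k$ indexed by degree $k-1$ elements and columns by degree $k$ elements) the entry $R_k[\ell, u]$ equals the Bruhat number $\lambda(u)$ under consideration. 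The key reduction is that if the position $(\ell, u)$ turns out to be \emph{not covered} in the sense of \Cref{subs:matr_spaces}, then the definition of $\Ll(R_k)$ forces $D_{\ell, u} = R_k[\ell, u]$; combined with $D_{\ell, u} \in \Z$ this yields $\lambda(u) \in \Z$.

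Thus the whole task reduces to checking uncoveredness of the entry $(\ell, u)$ in $R_k$. Assume for contradiction that some pair of indices $(i', j')$ with $R_k[i', j'] \neq 0$ satisfies the covering inequality. Non-vanishing of $R_k[i', j']$ means that $(i', j')$ is itself a Barannikov pair, with $b(j') = i'$, $i' < j'$, and of the same degree $k-1$ as $(\ell, u)$. Now I split into the two clauses of the definition of ``covered''. In the clause $\ell < i'$ and $u \geq j'$, the subcase $u = j'$ would place two non-zero entries in column $u$, contradicting the rook property, so $u > j'$ and therefore $\ell < i' < j' < u$. In the clause $\ell \leq i'$ and $u > j'$, the subcase $\ell = i'$ would place two non-zero entries in row $\ell$, so $\ell < i'$ and again $\ell < i' < j' < u$. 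Either way, $(i', j')$ is a pair of the same degree sitting strictly inside $(\ell, u)$, contradicting shortness.

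The argument is essentially a direct translation between two combinatorial languages: the covering relation on rook matrices and ``pair nested inside a pair''. I do not anticipate a real obstacle beyond carefully matching the indexing conventions (degree $k-1$ elements indexing rows, degree $k$ elements indexing columns of $R_k$) and keeping track of which clause of the covering definition is being refuted. Everything else is packaged in \Cref{prop:matr_comp} and in the definition of $\Ll(R_k)$.
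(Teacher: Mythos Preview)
Your argument is correct and follows the same route as the paper: the paper simply asserts that short pairs correspond precisely to the non-zero entries of $R_k$ that are not covered and then invokes \Cref{prop:matr_comp}, whereas you spell out that correspondence explicitly via the rook-property case analysis. The only cosmetic point is that you are silently identifying matrix row/column indices with the global labels of the corresponding degree-$(k{-}1)$ and degree-$k$ elements, but since the two orderings agree this causes no trouble.
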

\begin{proof}
  Take any short pair of degree, say, $k-1$. Let $R_k$ be a rook matrix from the
  B-data of $(\C \otimes \Q, \kappa)$. Short pairs correspond precisely to those
  non-zero entries of $R_k$ which are not covered (in the terminology of
  \Cref{subs:matr_spaces}). The statement now follows from
  \Cref{prop:matr_comp}.
\end{proof}

The next statement is a mere combination of the previous two. 
\begin{cor}
  \label{cor:short_matr_comp}
  Let $(\C, \kappa)$ be $\Z$-enhanced complex and let $(s,t)$ be a short
  Barannikov pair of $(\C \otimes \Q, \kappa)$. Let also $c$ be any basis of
  $(\C, \kappa)$. Then element $c_s$ appears in the differential of $c_t$ with
  the coefficient equal to Bruhat number on a pair $(s,t)$. In particular, this
  coefficient doesn't depend on $c$.
\end{cor}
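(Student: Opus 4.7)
The plan is to combine the two preceding results (Proposition \ref{prop:matr_comp} and Corollary \ref{cor:short_comp}) with a careful translation of the ``short pair'' condition into the ``uncovered entry'' condition from Subsection \ref{subs:matr_spaces}. The statement is essentially a coordinatewise reading of the fact that $\Ll(R_k)$ fixes the non-covered entries of $R_k$, so I don't expect any real obstacle; I only need to identify short pairs with uncovered non-zero entries of the rook matrix.

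First, I would fix $k = \deg t$ and locate the entry of the rook matrix $R_k$ from the B-data of $(\C \otimes \Q, \kappa)$ corresponding to the pair $(s,t)$. This is the entry at position $(i,j)$, where $i$ is the position of $s$ among the degree-$(k-1)$ basis elements and $j$ is the position of $t$ among the degree-$k$ ones; by definition of B-data its value is the Bruhat number $\lambda_t$.

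The key step is to check that shortness of $(s,t)$ is equivalent to the entry $R_{k,i,j}$ being uncovered. For a rook matrix the general covering condition from Subsection \ref{subs:matr_spaces} simplifies: the two ``weak on one side, strict on the other'' sub-cases are impossible, since each row and each column of $R_k$ contains at most one non-zero entry. So being covered amounts to the existence of a non-zero $R_{k,i',j'}$ with $i<i'$ and $j>j'$. Such an entry corresponds to a Barannikov pair $(s',t')$ of the same degree with $s<s'$ and $t'<t$; combined with the automatic $s'<t'$ this gives exactly the inequalities $s<s'<t'<t$ excluded by shortness.

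Finally, I would invoke Proposition \ref{prop:matr_comp}: for any basis $c$ of $(\C,\kappa)$, the matrix $D$ of $\d_k$ written in $c$ lies in $\Ll(R_k)$. Since $R_{k,i,j}=\lambda_t\neq 0$ is uncovered, the defining condition of $\Ll(R_k)$ forces $D_{i,j}=R_{k,i,j}=\lambda_t$. This says precisely that $c_s$ appears in $\d(c_t)$ with coefficient $\lambda_t$, and in particular this coefficient is independent of the chosen basis $c$. The only thing to be careful about is the reduction of the general covering condition to the strict form $i<i'$, $j>j'$, which rests on the rook property of $R_k$.
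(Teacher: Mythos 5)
Your proposal is correct and matches the paper's intended argument: the paper dismisses this corollary as ``a mere combination of the previous two'' statements, and you combine exactly \Cref{prop:matr_comp} with the identification (already used in the proof of \Cref{cor:short_comp}) of short pairs with uncovered non-zero entries of $R_k$. Your extra care in reducing the covering condition to the strict form $i<i'$, $j>j'$ via the rook property is a correct fleshing-out of a step the paper leaves implicit.
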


\subs{Bruhat numbers over the rationals}
\label{subs:q}
In this subsection we state several
facts about interplay between $\Z$-enhanced complex $(\C, \kappa)$ and enhanced
complex $(\C \otimes \Q, \kappa)$. The proofs will be given elsewhere.

For an abelian group $G$ we denote by $\# G$ its order and by $\Tors G$ its
torsion subgroup.

\begin{prop}
  Let $(\C, \kappa)$ be a $\Z$-enhanced complex. Let also $s$ and $t$ ($s > t$,
  both from $\{ 1, \dots, \rk \C_\bullet \}$)
  be a Barannikov pair of enhanced complex $(\C \otimes \Q, \kappa)$ with Bruhat
  number $\lambda \in \Q^*$. One then has
\[
  \pm \lambda = \frac{\# \Tors \H_\bullet(\C^s, \C^{t-1})}{\# \Tors
    \H_\bullet(\C^{s-1}, \C^t)} = \frac{\# \Tors \H_{\deg t}(\C^s, \C^{t-1})}{\#
    \Tors \H_{\deg t}(\C^{s-1}, \C^t)}.
  \]
\end{prop}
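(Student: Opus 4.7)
My plan is to reduce to a small sub-quotient, then extract $\lambda$ via two long exact sequences and a short abelian-group lemma.

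\textbf{Step 1 (reduction to a sub-quotient).} Replace $\C$ by the $\Z$-enhanced sub-quotient $\mathcal{E} := \C^s/\C^{t-1}$, whose filtration is $\mathcal{E}^i := \C^{t-1+i}/\C^{t-1}$ for $0 \le i \le s-t+1$. The pair $(s,t)$ in $(\C \otimes \Q, \kappa)$ corresponds to the \emph{extreme} pair $(S,T) := (s-t+1,\,1)$ in $(\mathcal{E} \otimes \Q, \kappa)$ with the same Bruhat number $\lambda$: the four sub-quotients appearing in \Cref{rem:4term} only involve levels between $t-1$ and $s$ and coincide in the two settings, while the construction of Bruhat numbers (\Cref{subs:bd}) is compatible with the projection $q_* \colon \H_{\deg s-1}(\C^{s-1}) \onto \H_{\deg s-1}(\C^{s-1}/\C^{t-1})$, since $q_*$ respects the enhancements of \Cref{con:enhancement_on_h} (the flag simply loses its lowest $t-1$ pieces, and $\cf$ and $\Ht$ behave equivariantly under the shift $j \mapsto j - t + 1$). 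This reduces the statement to computing the ratio inside $\mathcal{E}$, where $c_T = c_t$ and $c_S = c_s$.

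\textbf{Step 2 (two long exact sequences).} Set $k = \deg s$. The subcomplex $\mathcal{E}^T$ is $\Z c_T$ concentrated in degree $k-1$, and $\mathcal{E}/\mathcal{E}^{S-1}$ is $\Z c_S$ in degree $k$. The long exact sequence of the pair $(\mathcal{E}, \mathcal{E}^T)$ reduces to
\[
  0 \to \H_k(\mathcal{E}) \to \H_k(\mathcal{E}/\mathcal{E}^T) \to \Z \xto{\iota_*} \H_{k-1}(\mathcal{E}) \to \H_{k-1}(\mathcal{E}/\mathcal{E}^T) \to 0,
\]
with $\iota_*(1) = [c_T]$. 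Because $(S,T)$ is a Barannikov pair, $[c_T]$ is a rational boundary, hence has finite order $n$. The left half splits off a free summand (so torsions in degree $k$ agree), while the right half is a short exact sequence $0 \to \Z/n \to \H_{k-1}(\mathcal{E}) \to \H_{k-1}(\mathcal{E}/\mathcal{E}^T) \to 0$ which multiplies torsion orders by $n$; hence $\#\Tors \H_\bullet(\mathcal{E}) = n \cdot \#\Tors \H_\bullet(\mathcal{E}/\mathcal{E}^T)$. A symmetric analysis of the long exact sequence of $(\mathcal{E}/\mathcal{E}^T,\,\mathcal{E}^{S-1}/\mathcal{E}^T)$ (whose top quotient is $\Z c_S$ in degree $k$, with connecting map $1 \mapsto [\d c_S]$) yields $\#\Tors \H_\bullet(\mathcal{E}^{S-1}/\mathcal{E}^T) = m \cdot \#\Tors \H_\bullet(\mathcal{E}/\mathcal{E}^T)$, where $m$ is the finite order of $[\d c_S]$ in $\H_{k-1}(\mathcal{E}^{S-1}/\mathcal{E}^T)$. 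Thus the desired ratio equals $n/m$; since all the extra torsion lives in degree $k - 1 = \deg t$, restricting to that degree also gives the second equality in the theorem.

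\textbf{Step 3 (identifying $n/m$ with $|\lambda|$).} Let $I := \im(\d_k|_{\mathcal{E}^{S-1}_k}) \subseteq \mathcal{E}^{S-1}_{k-1}$ and let $\bar c_T, \overline{\d c_S}$ denote the images of $c_T, \d c_S$ in $A := \mathcal{E}^{S-1}_{k-1}/I$; by definition $n$ and $m$ are the orders of $\bar c_T$ in $A/\Z\overline{\d c_S}$ and of $\overline{\d c_S}$ in $A/\Z \bar c_T$ respectively. A rational Barannikov basis for $\mathcal{E} \otimes \Q$ gives $c_S^{\mathrm{rat}} = c_S + w$ with $w \in \mathcal{E}^{S-1}_k \otimes \Q$ and $\d c_S^{\mathrm{rat}} = \lambda c_T$, so $\overline{\d c_S} = \lambda \bar c_T$ in $A \otimes \Q$. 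The result follows from the following elementary lemma: if $x, y$ in a finitely generated abelian group satisfy $y = \lambda x$ rationally with $\lambda = p/q$ in lowest terms, then writing $x = qd\,g + \tau_x$, $y = pd\,g + \tau_y$ with $g$ a generator of the free part of $\Z x + \Z y$ modulo torsion and setting $o := \mathrm{ord}(p \tau_x - q \tau_y) \in \Tors A$, one finds both orders to be of the form $|p|\cdot o$ and $|q|\cdot o$, so their ratio equals $|p|/|q| = |\lambda|$.

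The main obstacle is Step 1: verifying rigorously that the Bruhat number is preserved by passage to the sub-quotient, which amounts to a diagram chase comparing the induced enhancements of \Cref{con:enhancement_on_h} on $\H_{\deg s - 1}(\C^{s-1})$ and on its quotient $\H_{\deg s - 1}(\C^{s-1}/\C^{t-1})$. Once this compatibility is established, the rest of the argument is routine.
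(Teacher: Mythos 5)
The paper itself defers the proof of this proposition (``the proofs will be given elsewhere''), so there is no argument in the text to compare yours against; judged on its own merits, your proof is essentially correct. Two observations. First, the compatibility you flag as ``the main obstacle'' in Step 1 is already a proposition in the paper: the subsection on taking B-data of sub- and quotient complexes, applied to the enhanced complex $(\C\otimes\Q,\kappa)$ with $l=t-1$ and $m=s$, says precisely that $(s,t)$ becomes the extreme pair of $(\C^s/\C^{t-1})\otimes\Q$ with the same Bruhat number (its proof just restricts a Barannikov basis), so no further diagram chase is needed. Second, Steps 2 and 3 check out: the two long exact sequences concentrate the entire torsion discrepancy in degree $k-1=\deg t$ (which also yields the second equality of the proposition); the identifications of $n$ and $m$ with the orders of $\bar c_T$ in $A/\Z\overline{\d c_S}$ and of $\overline{\d c_S}$ in $A/\Z\bar c_T$ are correct; and the multiplicativity $\#\Tors G=n\cdot\#\Tors H$ for $0\to\Z/n\to G\to H\to 0$ holds because the image of $\Z/n$ lies in $\Tors G$ and $\Tors G/(\Z/n)=\Tors H$.

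The one genuine, though repairable, gap is in the elementary lemma of Step 3: as stated it is false when $x=\bar c_T$ and $y=\overline{\d c_S}$ are both torsion in $A$ (then $qy-px\in\Tors A$ is vacuous, the free part of $\Z x+\Z y$ has no generator $g$, and the two orders need not have ratio $|p|/|q|$ --- e.g. $A=\Z/5$, $x=1$, $y=2$). So you must verify $d\neq 0$, i.e. that $\bar c_T$ is non-torsion in $A=\mathcal{E}^{S-1}_{k-1}/\im(\d_k|_{\mathcal{E}^{S-1}_k})$. This does hold: if some nonzero multiple of $c_T$ were a boundary in $\mathcal{E}^{S-1}$, then $[c_T]$ would vanish in $\H_{k-1}(\mathcal{E}^{S-1})\otimes\Q=\H_{k-1}(\mathcal{E}^{S-1}\otimes\Q)$; but since $(S,T)$ is a Barannikov pair, in a rational Barannikov basis $c_T$ is an unpaired (homological) generator of the truncation $\mathcal{E}^{S-1}\otimes\Q$, killed only once $c_S$ is adjoined, so its class there is nonzero. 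With that sentence added, the argument is complete.
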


In \Cref{sec:br_tors} will interpret Bruhat numbers as a certain kind of
torsion. From this viewpoint the given formula is of type ``torsion=torsion''.
For its close relative, see \cite[Theorem 4.7]{Tur}, proven in weaker generality
by Milnor \cite{MilTor}. See also \cite{Charette} for similar in spirit
statement in symplectic topology. We stress out that we place no acyclicity 
condition on a complex $\C$.

\begin{prop}
  Let $(\C, \kappa)$ be a $\Z$-enhanced complex. Then the following are
  equivalent:
  \begin{enumerate}[label=\arabic*)]
    \item $\Tors \H_\bullet(\C^s, \C^t) = 0$ for all $s > t$ (both from $\{ 1,
      \dots, \rk \C_\bullet \}$),
    \item all the Bruhat numbers of $(\C \otimes \Q, \kappa)$ equal to $\pm 1$,
    \item the $\Z$-enhanced complex $(\C, \kappa)$ is isomorphic (in the sense
      of item 1) in the list from \Cref{subs:Zenh_comp}) to the direct sum of
      complexes of two forms: \begin{enumerate*}[label=\arabic*)] \item
        $0 \to \Z \to 0$, and \item $0 \to \Z \xto{\sim} \Z \to
        0$. \end{enumerate*}
  \end{enumerate}
\end{prop}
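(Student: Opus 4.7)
The plan is to prove the three conditions equivalent via the cyclic chain $(3) \Rightarrow (1) \Rightarrow (2) \Rightarrow (3)$. The first two implications are short. For $(3) \Rightarrow (1)$: any sub-quotient $\C^s/\C^t$ of a filtration-compatible direct sum of $\Z$-pieces and $\Z \xto{\sim} \Z$-pieces inherits a direct-sum decomposition of the same form (with a $\Z \xto{\sim} \Z$ piece degenerating to a single $\Z$ whenever only one of its two generators lies in the filtration range), so its homology is free abelian and the torsion subgroup vanishes. For $(1) \Rightarrow (2)$: the preceding proposition's formula
\[
\pm\lambda \;=\; \frac{\#\Tors \H_\bullet(\C^s, \C^{t-1})}{\#\Tors \H_\bullet(\C^{s-1}, \C^t)}
\]
forces $\lambda = \pm 1$ whenever both torsion subgroups are trivial.

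The substantive direction is $(2) \Rightarrow (3)$, which I would establish by induction on $N = \rk \C_\bullet$, splitting off one indecomposable summand at each step. If the set $H$ in the B-data of $(\C \otimes \Q, \kappa)$ is non-empty, I would pick any $s \in H$, adjust $c_s$ by an integer combination of earlier basis elements to turn it into a cycle $c'_s$, and observe that $c'_s$ is not in the image of $\d$, so that $\langle c'_s \rangle$ splits off as a direct $\Z$-summand. Otherwise every index is paired; I would choose an \emph{innermost} Barannikov pair $(s, b(s))$---one containing no pair of any degree strictly between $b(s)$ and $s$---and combine \Cref{prop:matr_comp} with the hypothesis that this pair carries Bruhat number $\pm 1$ to conclude that $\d c_s = \pm c_{b(s)} + R$ with $R \in \C^{b(s)-1}$. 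Replacing $c_{b(s)}$ by $c'_{b(s)} \colonequals c_{b(s)} \pm R$ (which is legal since $R \in \C^{b(s)-1}$) yields $\d c_s = \pm c'_{b(s)}$, after which the remaining basis elements are modified by integer multiples of $c_s$ and $c'_{b(s)}$ so that $\langle c'_{b(s)}, c_s \rangle$ becomes a direct summand isomorphic to $\Z \xto{\pm 1} \Z$.

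The main obstacle is ensuring that every basis modification remains integral. The leverage is that Bruhat numbers equal $\pm 1$: whenever the algorithm needs to cancel a ``covered'' entry of the differential matrix in the sense of \Cref{prop:matr_comp} against a rook entry, the required coefficient is a ratio whose denominator is that rook entry, hence $\pm 1$, hence an integer. The innermost choice of pair controls the order in which covered entries must be eliminated, while the $\d^2 = 0$ relations verify that coefficients which might obstruct the split---for instance, the coefficient of $c_s$ in the differential of some $c_j$ with $j > s$---either vanish automatically or can be cleared by further unitriangular integer adjustments.
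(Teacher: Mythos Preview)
The paper does not prove this proposition: \Cref{subs:q} explicitly says ``The proofs will be given elsewhere.'' So there is no paper argument to compare against, and your proposal is effectively supplying one.

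Your implications $(3)\Rightarrow(1)$ and $(1)\Rightarrow(2)$ are correct, and the innermost-pair branch of $(2)\Rightarrow(3)$ is essentially right: shortness of $(s,t)$ together with \Cref{prop:matr_comp} does give $\d c_s=\pm c_t+R$ with $R\in\C^{t-1}$, and after replacing $c_t$ by $c_t':=\pm\d c_s$ the homotopy $h(c_t')=\pm c_s$, $h=0$ elsewhere, yields an integral unitriangular projector $\pi=1-\d h-h\d$ whose image is a $\Z$-enhanced complement carrying the original B-data with the pair $(s,t)$ deleted. Hypothesis $(2)$ therefore persists and the induction runs. (Your ``innermost --- no pair of any degree inside'' is stronger than needed; ``short'' already suffices for both steps, but that does no harm.)

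The $H$-branch, however, has a real gap. You claim $c_s$ can be adjusted to an integral cycle, i.e.\ that $\d c_s$ is a boundary in $\C^{s-1}$ over $\Z$. Over $\Q$ this follows from $s\in H$, but over $\Z$ it says precisely that the class of $\d c_s$ in $\Tors\H_\bullet(\C^{s-1})$ vanishes --- which is condition $(1)$, not yet at your disposal. Even granting a cycle $c_s'$, the assertion that $\langle c_s'\rangle$ ``splits off as a direct $\Z$-summand'' of the \emph{complex} is unjustified: for $j>s$ with $\deg j=\deg s+1$ the element $c_s'$ may occur in $\d c_j$, and no unitriangular modification of $c_j$ removes it, since that would require an element whose boundary hits $c_s'$, contradicting $s\in H$.

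The cleanest repair is to drop the $H$-branch entirely. Keep splitting off innermost pairs as long as $U\neq\varnothing$; when $U=\varnothing$ every rook matrix is zero, so by \Cref{prop:matr_comp} every entry of every $\d_k$ is an uncovered zero, hence $\d=0$ integrally, and the residual complex is manifestly a direct sum of copies of $0\to\Z\to 0$.
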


\subs{Taking B-data commutes with taking sub- and quotient complexes} Given an
enhanced complex $(\C,\kappa)$ and two integers
$0 \leq l \leq m \leq \dim \C_\bullet$, consider a quotient complex
$\sfrac{\C^m}{\C^l}$; it inherits an enhancement. The goal of this subsection is
to provide a recipe on how to express B-data of this complex in terms
of the initial one. To this aim we, first, describe this recipe and, second,
prove that it is correct.

Let us fix the notations first. Aforementioned enhanced complex will be denoted
as $(\C|_l^m,\kappa|_l^m)$. Let $(U,L,H,b,\lambda)$ be a B-data associated to
$(\C,\kappa)$. Given $l$ and $m$ we will now define another B-data
$(U',L',H',b',\lambda')$.

Set $U' \colonequals \{s \in U| l < s \leq m, b(s) > l \} - l$ (by convention,
subtracting an integer $l$ from a subset of integers yields another subset
formed by differences with $l$ of each element individually),
$L' \colonequals \{s \in L| l < s \leq m, b^{-1}(s) \leq m \} - l$,
$H' \colonequals \{1, \dots, m-l \} \setminus (U' \sqcup L')$. Define the
grading on $U'$ via its injection into $U$. Proceed similarly for $L'$ and $H'$.
For $s \in U'$ define \begin{enumerate*}[label=\arabic*)] \item
  $b'(s) \colonequals b(s+l)-l$, \item $\lambda'(s) \colonequals
  \lambda(s+l)$. \end{enumerate*}

\begin{prop}
  Let $(\C,\kappa)$ be an enhanced complex and $(U,L,H,b,\lambda)$ be its
  B-data. For a given $0 \leq l \leq m \leq \dim \C_\bullet$ the B-data of
  $(\C|_l^m,\kappa|_l^m)$ coincides with the data $(U',L',H',b',\lambda')$
  constructed above.
\end{prop}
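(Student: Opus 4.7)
The plan is to exploit a Barannikov basis and reduce the claim to a triviality about rook matrices. By \Cref{thm:classification} there exists a Barannikov basis $c=(c_1,\dots,c_{\dim\C_\bullet})$ of $(\C,\kappa)$. Since $\C^s=\langle c_1,\dots,c_s\rangle$ for every $s$, the images $\bar c_{l+1},\dots,\bar c_m$ in $\C^m/\C^l=\C|_l^m$ form a basis of $(\C|_l^m)_\bullet$ whose induced filtration is the one coming from $\kappa|_l^m$, and the element $\kappa_s$ (for $l<s\leq m$) descends to the enhancement element on $\sfrac{(\C|_l^m)^{s-l}}{(\C|_l^m)^{s-l-1}}$. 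Hence $(\bar c_{l+1},\dots,\bar c_m)$ is a basis of the enhanced complex $(\C|_l^m,\kappa|_l^m)$.

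I next verify that this induced basis is itself a Barannikov basis. Write $\d_k c_t=\sum_s R_{s,t}c_s$, where $R_k$ is the rook matrix of $\d_k$ in $c$. Since $\C^m$ is a subcomplex, for every $t\leq m$ each non-zero $R_{s,t}$ has $s<t\leq m$, so $\d_k\bar c_t=\sum_{s>l}R_{s,t}\bar c_s$ and the matrix of the induced differential is precisely the submatrix of $R_k$ supported on indices in $(l,m]$. A submatrix of a rook matrix (obtained by keeping a subset of rows and of columns) is again a rook matrix, so the induced basis is Barannikov.

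Reading off the B-data of $(\C|_l^m,\kappa|_l^m)$ from these submatrices and reindexing via $s\mapsto s-l$, a Barannikov pair $(s,b(s))$ of $(\C,\kappa)$ contributes a surviving non-zero entry iff both endpoints lie in $(l,m]$, i.e., iff $l<b(s)<s\leq m$; in that case the pair $(s-l,b(s)-l)$ appears in the quotient with the same Bruhat number $\lambda(s)$, which matches the definitions of $U'$, $L'$, $b'$, $\lambda'$ verbatim. Any index in $(l,m]$ whose partner under $b$ falls outside $(l,m]$, or which was already in $H$, becomes unmatched in the submatrix and hence lands in $H'=\{1,\dots,m-l\}\setminus(U'\sqcup L')$; the grading is preserved tautologically.

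The only step I expect to require real care is the first one: certifying that the enhancement on $\C^m/\C^l$ induced by the filtration and quotient really agrees with $\kappa|_l^m$ as introduced in the preceding paragraph of the paper, and that descending a Barannikov basis respects this identification. Once that is granted, the rest reduces to the observation that restricting a rook matrix to a block of contiguous rows and columns yields another rook matrix, and no homological machinery beyond \Cref{thm:classification} is required.
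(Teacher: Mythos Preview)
Your argument is correct and follows precisely the route taken in the paper: pick a Barannikov basis via \Cref{thm:classification}, observe that its elements with indices $l+1,\dots,m$ descend to a Barannikov basis of the slice, and read off the B-data from the resulting submatrix of the rook matrix. The paper's proof is simply a two-line compression of what you have spelled out, so there is nothing to compare.
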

\begin{proof}
  Take Barannikov basis of $(\C,\kappa)$ which exists by
  \Cref{thm:classification}. Its elements with indices from $l+1$ to $m$, when
  mapped to a $(l,m)$-slice, again form a Barannikov basis. The statement
  follows.
\end{proof}
\begin{rem}
  Informally, the B-data of $(\C|_l^m,\kappa|_l^m)$ is obtained from that of
  $(\C, \kappa)$ by the simplest procedure possible: one has to cut it from
  below and above at the given levels $l$ and $m$.
\end{rem}
\begin{rem}
  Although usage of \Cref{thm:classification} makes the proof shorter, it is
  still possible to prove the above statement directly from the definitions.
\end{rem}

\subs{Torsion of a chain complex}
\label{subs:tors_enh}
We begin this subsection by recalling Milnor's \cite{MilTor} definition of
torsion of a chain complex, closely following \cite{Tur}. After this we define
torsion of an enhanced complex and calculate it in terms of B-data.

Let $v=(v_1, \dots, v_{\dim V})$ and $v'=(v'_1, \dots, v'_{\dim V})$ be two
bases of $V$. Denote by $[v'/v] \in \F^*$ the determinant of a transition matrix
from $v$ to $v'$. We call two bases $v$ and $v'$ equivalent if $[v'/v] = 1$. Let
us now be given an exact triple of vector spaces
$0 \to U \xto{\iota} V \xto{\pi} W \to 0$ along with bases $u$ and $w$ of $U$
and $W$ respectively. Construct a basis $uw$ of $V$ as follows. For a vector
$w_i \in W$ set $\pi^{-1}(w_i) \in V$ to be any lift w.r.t. $\pi$. Now set
$uw \colonequals (\iota(u_1), \dots, \iota(u_{\dim U}), \pi^{-1}(w_1), \dots,
\pi^{-1}(w_{\dim W}))$. %
Equivalence class of $uw$ is independent of chosen lifts of $w_i$'s.

Recall that for a chain complex $(\C,\d)$ one defines boundaries $B_k$ to be
$\im \d_{k+1}$ and cycles $Z_k$ to be $\ker \d_k$. One then has two exact
triples:
\begin{gather}
  0 \to \B_k \to \Zy_k \to \H_k \to 0, \label{torsion:eqn:first_triple}\\
  0 \to \Zy_k \to C_k \xto{\d_k} \B_{k-1} \to
  0 \label{torsion:eqn:second_triple}.
\end{gather}

Let us now be given bases $c_k$ of $C_k$ and $h_k$ of $\H_k$ (for all admissible
$k$). Choose any basis $b_k$ of $\B_k$. Construct, first, a basis $b_k h_k$ of
$\Zy_k$ via triple (\ref{torsion:eqn:first_triple}), and, second, a basis
$b_k h_k b_{k-1}$ of $C_k$ via triple (\ref{torsion:eqn:second_triple}). Define
the torsion of $\C$ to be
\[
\tau(\C)\colonequals \prod_{i=0}^{n}[b_ih_ib_{i-1}/c_i]^{(-1)^{i+1}}\in\F^*.
\]
It is straightforward to show that $\tau(\C)$ depends only on $(\C,\d)$,
equivalence class of $c_k$ and that of $h_k$ (see \cite{Tur}).

\begin{rem}
  \label{rem:tors_basis}
  If one replaces basis $c_k$ with $c_k'$ for some particular $k$ then the
  torsion gets multiplied by $[\sfrac{c_k'}{c_k}]^{(-1)^{k+1}}$.
\end{rem}

Any two bases of a given enhanced vector space $(V,\kappa)$ are equivalent.
Analogously, for any two bases $c$ and $c'$ of an enhanced complex $(\C,\kappa)$
one has equivalence between $\underline{c}_k$ and $\underline{c'}_k$, where by
$\underline{c}_k$ we mean (here and further) an ordered subset of $c$
corresponding to a basis of $(C_k, \kappa_k)$ (see \Cref{rem:enh_c_k}).

Let us now assemble all the pieces together. Let an enhanced complex
$(\C,\kappa)$ be given. Choose any basis $c$ of $(\C,\kappa)$. Recall that by
\Cref{subs:enhancement_on_h} we have an enhanced vector space
$(\H_k(\C),\kappa_{\H_k})$ for each $k$. Choose any basis $h_k$ of
$(\H_k(\C),\kappa_{\H_k})$. Define the torsion of an enhanced complex
$(\C,\kappa)$ to be the torsion of $\C$ w.r.t. bases $\underline{c}_k$ and
$h_k$; denote it by $\tau(\C,\kappa)$. This number is well-defined since
equivalence classes of both $\underline{c}_k$ and $h_k$ are well-defined.

\begin{rem}
  We stress out that $\tau(\C, \kappa)$ depends only on the enhancement on
  $\H_\bullet(\C)$. Bruhat numbers, however, depend on enhancement on
  $\H_\bullet(\C^s)$ for various $s$.
\end{rem}

\begin{rem}
  Some kind of interplay between filtration and torsion also appears in
  \cite[Appendix A]{GKZ}. 
\end{rem}

\begin{con}
  \label{con:perm}
  Let $(N,U,L,H,b)$ be a part of a B-data. We will now construct a
  permutation $\sigma$ of $N$ elements.
\end{con}  
\triright Note that $b$ doesn't have anything to do with $b_i$ from the
definition of torsion. For a fixed $k$, the set $U$ determines a subset of a set
$\{1, \dots, \# \{ s \in \{ 1, \dots, N \} | \deg s = k \} \}$, call it $U_k$.
Define $L_k$ and $H_k$ similarly; the map $b$ determines a bijection
$b_k : U_k \to L_{k-1}$. We will now define a permutation $\sigma_k$ on
$\# \{ s \in \{ 1, \dots, N \} | \deg s = k \}$ elements by writing integers in
a row. First, write down elements of $L_k$ in increasing order. Second, write
down elements of $H_k$ also in increasing order. Third, write down elements of
$U_k$, but this time in the order of increasing of $b_k(s)$, for $s \in U_k$.

For two permutations $\sigma$ and $\pi$ of length $l$ and $m$ their (direct) sum
$\sigma + \pi$ is defined as a permutation of $l + m$ elements acting as
$\sigma$ on the first $l$ elements and as $\pi$ on the last $m$ elements. We
define $\sigma$ to be the sum $\sigma_0 + \ldots + \sigma_n$, where
$n=\max_{s \in \{ 1, \dots, N \} } \deg s$. \trileft

The sign of a permutation $\sigma$ will be denoted as $(-1)^\sigma$.
\begin{prop}
  \label{prop:tors_enh}
  Let $(\C,\kappa)$ be an enhanced complex and $(U,\lambda)$ be a part of its
  B-data. Let also $\sigma$ be the permutation from \Cref{con:perm}. We
  then have
  \[
    \tau(\C,\kappa)=(-1)^\sigma \prod_{s \in U}\lambda(s)^{(-1)^{\deg s}}.
  \]
\end{prop}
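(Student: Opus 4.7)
The plan is to compute $\tau(\C,\kappa)$ directly in a Barannikov basis $c$, whose existence is guaranteed by \Cref{thm:classification}. The choice will not affect the answer: any two Barannikov bases of $(\C,\kappa)$ differ on each $C_k$ by a unitriangular change of basis, so the classes $\underline{c}_k$ and $\underline{c}'_k$ are equivalent, and $\tau(\C,\kappa)$ depends only on these equivalence classes. In a Barannikov basis one has the transparent formulas $\d c_s = \lambda(s)\,c_{b(s)}$ for $s \in U$ and $\d c_s = 0$ otherwise. In particular $Z_k = \langle c_s : s \in L_k \cup H_k \rangle$, $B_k = \langle c_t : t \in L_k \rangle$, and a natural basis $b_k$ of $B_k$ is given by $(\d c_s)_{s \in U_{k+1}}$ with indices $s$ ordered by increasing $b(s) \in L_k$.

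Next I would verify that $([c_s])_{s \in H_k}$ is a basis of the enhanced vector space $(\H_k(\C), \kappa_{\H_k})$ by tracing \Cref{con:enhancement_on_h} in the Barannikov basis. The set $H_k$ coincides exactly with those $s$ of degree $k$ where $\dim \im \iota_s$ jumps, and $c_s$ itself is a legal choice of $p_*^{-1}(h_s)$, whose class in the quotient flag space is the prescribed enhancement element. Declaring $h_k \colonequals ([c_s])_{s \in H_k}$ in increasing order of $s$ thus gives a basis of the enhanced $\H_k$.

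The main computation is now mechanical. Building $b_k h_k b_{k-1}$ via the triples \eqref{torsion:eqn:first_triple} and \eqref{torsion:eqn:second_triple}, lifting $h_k$ to $(c_s)_{s \in H_k}$ and lifting $b_{k-1}$ to $(c_s)_{s \in U_k}$ (ordered by increasing $b(s)$), produces the ordered list
\[
\bigl(\lambda(b^{-1}(t))\,c_t\bigr)_{t \in L_k},\ (c_s)_{s \in H_k},\ (c_s)_{s \in U_k,\text{ ordered by } b(s)}.
\]
The transition matrix from $\underline{c}_k$ (ordered by $s$ increasing) to this list decomposes as a diagonal scaling, whose entries are $\lambda(s)$ for $s \in U_{k+1}$ and ones elsewhere, followed by a permutation matrix whose permutation is precisely $\sigma_k$ from \Cref{con:perm}. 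Hence $[b_k h_k b_{k-1}/\underline{c}_k] = (-1)^{\sigma_k} \prod_{s \in U_{k+1}} \lambda(s)$. Taking the alternating product over $k$, using $((-1)^{\sigma_k})^{\pm 1} = (-1)^{\sigma_k}$ and $(-1)^\sigma = \prod_k (-1)^{\sigma_k}$ (since $\sigma = \sigma_0 + \ldots + \sigma_n$ is a direct sum), and reindexing via $\deg s = k+1$ for $s \in U_{k+1}$, yields the asserted formula.

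The main obstacle is the bookkeeping of the permutation: one must check carefully that the ordering chosen for $b_k$ (by increasing $b(s)$) together with the chosen orderings of $h_k$ and of the lifts of $b_{k-1}$ reproduces exactly $\sigma_k$ of \Cref{con:perm}, and not its inverse or a conjugate, which would spoil the sign. A secondary care point is the identification of $([c_s])_{s\in H_k}$ with a basis of $(\H_k(\C),\kappa_{\H_k})$; as sketched above this is a direct trace through \Cref{con:enhancement_on_h}, relying on the fact that in a Barannikov basis new homology classes appear exactly at indices $s \in H$.
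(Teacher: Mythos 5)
Your proposal is correct and follows essentially the same route as the paper's proof: compute $\tau(\C,\kappa)$ in a Barannikov basis (via \Cref{thm:classification}), take $h_k = ([c_s])_{s\in H_k}$ and $b_k$ spanned by the $\d c_s$ (equivalently the $c_t$, $t\in L_k$, up to the harmless scaling), and read off the determinants; the paper compresses this into ``these choices yield the right-hand side by the very definitions,'' while you carry out the permutation and scaling bookkeeping explicitly, and your accounting of $\sigma_k$ and of the exponents $(-1)^{\deg s}$ checks out.
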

\begin{proof}
  Since any two bases of an enhanced vector space are equivalent, we may
  calculate $\tau(\C,\kappa)$ in some Barannikov basis $c$, which exists by
  \Cref{thm:classification}. We continue using notations introduced in the
  beginning of this subsection. Choose basis $h_k$ to be $c_s$ for all
  $s \in H$. Similarly, choose basis $b_k$ to be $c_s$ for all $s \in L$ (the
  linear order on both bases is induced from that on $c$). These choices yield a
  right-hand side by the very definitions.
\end{proof}

\section{Morse theory}
\label{sec:morse}
 
In the first part of this section we introduce, after neccesary preparations, a
construction which associates an enhanced complex over $\F$ with a strong Morse
function (see \Cref{subs:enh_comp_morse}). This justifies a thorough study of
enhanced complexes in the previous section. We then proceed to discuss various
properties of Bruhat numbers of a given strong Morse function. The majority of
results translates readily to the setting of discrete Morse theory in a sense of
Forman \cite{Forman}; as in the smooth case, the strongness assumption on a
function is crucial and must be satisfied.

\subs{Setup}
\label{subs:morse_setup}
In this subsection we recall basic notions of Morse theory and fix appropriate
notations, setting the stage for our results.

Let $M$ be a smooth closed manifold fixed once and for all throughout this
section. Recall that a smooth function $f \colon M \to \R$ is called Morse if
all its critical points are non-degenerate. A smooth function is called strong
if all its critical values are pairwise distinct. Fix a strong Morse function
$f$ on $M$ once and for all throughout this section. For $a \in \R$ the subspace
$M^a \colonequals \{ x \in M | f(x) \leq a \}$ is called a sublevel set.

Morse' idea was to track how the homotopy type of $M^a$ changes while $a$ grows
from $-\infty$ \mbox{to $+\infty$}. This is perfomed by investigating the
critical points of $f$, the set of which is denoted by $\Cr(f) \subset M$. Since
$f$ is strong those are in bijection with critical values of $f$ (this set is
finite because of the compactness of $M$). Keeping this bijection in mind, we
will freely switch between points and values without mentioning this explicitly.
The set $\Cr(f)$ is $\Z_{\geq 0}$-graded by index of a critical point, the
degree of $c \in \Cr(f)$ is denoted by $\deg c$. Though it's more natural to say
``index of critical point'', we will mostly say ``degree'' in order to be
consistent with \Cref{sec:enh_comp}. The set of all critical points of degree
$k$ is denoted by $\Cr_k(f)$. Note that the set $\Cr(f)$ is also naturally
linearly ordered; we denote by $c_s \in \Cr(f)$ (for
$s \in \{1, \dots, \# \Cr(f) \}$) its $s^{\text{th}}$ element w.r.t. this order.
By $\eps$ we will mean a sufficiently small positive real number.

It follows from foundational results of Morse theory, which we recall in
\Cref{subs:cw_comp_morse} that \mbox{for $c \in \Cr(f)$} one has
$\H_{\deg c} (M^{f(c) + \eps}, M^{f(c) - \eps}; \Z) \simeq \Z$. We say that a
critical point is oriented if the generator of this free abelian group of rank
one is chosen. A strong Morse function is called oriented if all its critical
points are oriented (see \Cref{subs:orient} for a discussion).

It will be convenient to fix the set of real numbers $r_s$ (for
$s \in \{ 0, \dots, \# \Cr(f) \}$) s.t.
\[
  r_0 < f(c_1) < r_1 < \ldots < f(c_{\# \Cr(f)}) < r_{\# \Cr(f)}.
\]
Such numbers are called regular values. 

Fix a field $\F$ once and for all. All the chain complexes and homologies are
assumed to be over $\F$ unless stated otherwise. If the group of coefficients is
given explicitly, it goes after a semicolon, e.g. $\H_2(M; \Z)$.

\subs{B-data associated with a strong Morse
  function} \label{subs:bd_morse} In this subsection we present and discuss the
\begin{con}
  \label{con:bd_morse}
  Let $f$ be an oriented strong Morse function on $M$ and $\F$ be a field. We
  will now construct a B-data.
\end{con}  
\triright Since $\Cr(f)$ is linearly ordered, it is in natural bijection with
$\{ 1, \dots, N\}$; define the grading on the latter by that on the former. The
manifold $M$ is filtered, as a topological space, by subspaces
\[
  \emptyset = M^{r_0} \subset M^{r_1} \subset \ldots \subset M^{r_N} = M.
\]
Moreover, for each $ s \in \{ 1, \dots, N \}$ the space $M^{r_s}$ is homotopy
equivalent to $M^{r_{s-1}}$ with a single cell of dimension $\deg s$ attached.
Since $f$ is oriented, the one-dimensional vector space
$\H_{\deg s} (M^{r_s}, M^{r_{s-1}}) \simeq \H_{\deg s} (M^{r_s}, M^{r_{s-1}};
\Z) \otimes \F \simeq \F$ has a preferred basis $o \otimes 1$, where $o$ is a
generator of $\H_{\deg s} (M^{r_s}, M^{r_{s-1}}; \Z) \simeq \Z$. Therefore, the
complex of singular chains on $M$ (with coefficients in $\F$) satisfies
conditions of \Cref{con:enhancement_on_h} and we are able to extract a B-data as
in \Cref{subs:bd} (see \Cref{rem:bd_gen}). \trileft

In particular, we have just constructed enhancement on a homology vector space
$\H_\bullet(M)$ as well as on $\H_\bullet(M^{r_s})$ for all $s$. We call the
image of $\lambda$ ``Bruhat numbers'' of oriented Morse function; the same goes
for the Barannikov pairs (or, briefly, pairs). Informally, \Cref{con:bd_morse}
decomposes some critical points (equivalently, values) of $f$ into Barannikov
pairs. Moreover, it a associates a Bruhat number (i.e. an element of $\F^*$)
with each pair (see the beginning of the \Cref{subs:bd}). Points
$c_s \in \Cr(f)$ s.t. $s \in H$ are called homological critical points. The
number of homological points of index $k$ equals to $\dim \H_k(M)$ (see
\Cref{subs:bd}). Analogously points from $U$ (resp. $L$) are called upper (resp.
lower). We stress out that we haven't yet considered any finite-dimensional
approximation of filtered complex of singular chains on $M$; we will do so in
\Cref{subs:cw_comp_morse,subs:enh_comp_morse}.

\begin{rem}
  \label{rem:orient}
  Changing the orientation of some critical point $c_s \in \Cr(f)$ alters the
  B-data as follows. First, the decomposition into pairs stays the
  same. Second, if $c_s$ is homological, then the whole B-data stays
  the same. Otherwise, if $c_s$ belongs to some pair, then the Bruhat number on
  this pair gets multiplied by $-1$. Therefore, canonically we can associate
  Bruhat numbers to pairs only up to a sign.
\end{rem}

\begin{rem} 
  \label{rem:or_bar}
  The original Barannikov's construction \cite{Bar} produces the same set of
  pairs. To see this, one should combine \Cref{rem:thm_classification} and
  results from \Cref{subs:enh_comp_morse}. See also \cite{EH} for a topological
  data analysis perspective. 

  A close idea of construction of Bruhat numbers over $\Q$ appeared
  independently in \cite{Vit2}.
\end{rem}
    
We conclude this subsection by several remarks. The construction of homological
critical points goes back to Lyusternik and Shnirelman. Note that this
construction implies weak Morse inequalities:
$\# \Cr_k(f) \geq \dim \H_k(M; \F)$ for any field $\F$. See also \cite{Vit0} for
the innovative fruitful applications of similar ideas in symplectic topology.

\Cref{rem:4term} translates to topological setting as follows. The condition for
two critical values $a$ and $b$ to form a Barannikov pair is equivalent to \[
  \dim \H_\bullet(M^{a - \eps}, M^{b + \eps}) =
  \dim \H_\bullet(M^{a + \eps}, M^{b - \eps}) =
  \dim \H_\bullet(M^{a - \eps}, M^{b - \eps}) - 1 =
  \dim \H_\bullet(M^{a + \eps}, M^{b + \eps}) - 1.
\]

See \cite{PC} and also \cite{Vit1}.

\subs{A digression on oriented Morse functions} \label{subs:orient} In this
subsection we give an alternative definition of an oriented Morse function (see
\Cref{subs:morse_setup}).

Let $Q$ be a quadratic form on a vector space $V$ over $\R$. Consider a vector
subspace $L \subset V$ of maximal dimension (this dimension is called negative
inertia index of $Q$) such that the restriction $Q|_L$ is negative-definite.
It's easy to see that the space $\{ L \}$ of all such vector subspaces is
contractible. Therefore, since any covering of a contractible space is trivial,
the space of all such oriented subspaces has two contractible components. Note
that a particular choice of a component determines an orientation of $L$ (as a
vector space).

Now, given a Morse function $f$ and any its critical point $p$, one may consider
a vector space $T_p M$ and a Hessian $\text{Hess}_p(f)$ on it. Function $f$ is
called oriented if, for any critical $p$ one of two mentioned components is
chosen. One may check that this definition coincides with the one given in
\Cref{subs:morse_setup}.

\subs{CW-complex associated with a strong Morse
  function} \label{subs:cw_comp_morse} In this subsection we briefly recall the
classical results from Morse theory, following \cite{Mi1}, in order to fix the
notations needed further in \Cref{subs:enh_comp_morse}.

\begin{thm}
  \label{thm:main_thm_morse}
  Let $f$ be an oriented strong Morse function on a closed manifold $M$. Then
  there exists a CW-complex $K$ s.t. the following holds:
  \begin{enumerate}[label=\arabic*)]
  \item $M$ is simple homotopy equivalent to $K$,
  \item cells of $K$ are in bijection with critical points of $f$. Moreover,
    dimension of a cell equals to the index of a critical point.
  \end{enumerate}
\end{thm}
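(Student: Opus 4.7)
The plan is to build $K$ inductively by tracking how the homotopy type of the sublevel sets $M^a$ evolves as $a$ increases from $-\infty$ to $+\infty$, attaching one cell each time we cross a critical value. This is the standard Morse-theoretic construction of a CW-model, and I will follow the approach of Milnor's \emph{Morse Theory} (or equivalently \emph{h-cobordism}), upgraded to a simple homotopy equivalence.

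First, I would recall the two foundational lemmas of Morse theory. \emph{Regular intervals}: if $[a,b]$ contains no critical values of $f$, then the flow of (a rescaled) $-\mathrm{grad}\, f$ for some Riemannian metric gives a diffeomorphism $M^b \xrightarrow{\sim} M^a$, and in particular a deformation retraction. \emph{Critical intervals}: if $[a,b]$ contains a single critical value $f(c)$ of index $k$, then there is a deformation retraction of $M^b$ onto $M^a \cup_{\phi} e^k$, where $\phi \colon \partial e^k = S^{k-1} \to M^a$ is the attaching map built from the descending disk of $c$. The orientation of $c$ (in the sense of \Cref{subs:orient}) gives the cell $e^k$ a coherent orientation.

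Second, I would perform induction on the regular values $r_0 < r_1 < \ldots < r_N$ from \Cref{subs:morse_setup}. Set $K^{r_0}$ to be the empty CW-complex. Given $K^{r_{s-1}}$ with a simple homotopy equivalence $h_{s-1} \colon M^{r_{s-1}} \to K^{r_{s-1}}$, apply the second Morse lemma to produce the attaching map $\phi_s \colon S^{\deg c_s - 1} \to M^{r_{s-1}}$. Compose with $h_{s-1}$ and apply cellular approximation to get a cellular map $\psi_s \colon S^{\deg c_s - 1} \to K^{r_{s-1}}$, then set $K^{r_s} := K^{r_{s-1}} \cup_{\psi_s} e^{\deg c_s}$. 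The pushout property yields a homotopy equivalence $h_s \colon M^{r_s} \to K^{r_s}$ extending $h_{s-1}$. Taking $K := K^{r_N}$ gives a complex with the desired cell-critical point bijection respecting dimensions/indices, proving item 2.

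The main obstacle is promoting the assembled equivalence to a \emph{simple} homotopy equivalence (item 1). Each inductive step glues a cell along the cellular approximation of a given continuous map, and the ambiguity in this approximation is well-known to produce only elementary expansions and collapses; hence the Whitehead torsion of the composite map $M \to K$ vanishes. I would invoke this via the standard result that if $A \subset X$ is a cofibration and $A \to B$ is a simple homotopy equivalence, then the induced map $X \to X \cup_A B$ is a simple homotopy equivalence (see Cohen, \emph{A Course in Simple-Homotopy Theory}). Applied inductively along the filtration $M^{r_0} \subset M^{r_1} \subset \ldots \subset M^{r_N}$, this yields that $h_N$ is simple, completing the proof.
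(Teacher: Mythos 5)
Your proposal is correct and follows essentially the same route as the paper: the paper proves this by the same inductive cell-attachment over the regular values $r_0<\ldots<r_N$ (\Cref{con:main_lem_morse} and \Cref{con:cw_comp}), using the antigradient flow to produce the attaching map, cellular approximation to land in the CW-model, and the folklore observation (\Cref{rem:simple_tors}) that each step, and hence the composite, is a simple homotopy equivalence. Your explicit appeal to the gluing lemma for simple homotopy equivalences just fills in what the paper leaves as folklore.
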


\begin{rem}
  The CW-complex $K$ need not be unique (see \Cref{con:cw_comp}), but its simple
  homotopy type obviously is. Roughly speaking, the purpose of
  \Cref{subs:enh_comp_morse} is to encode the information which can be extracted
  uniquely from $f$ in algebraic terms. Note that Morse theory originated before
  CW-complexes were invented, see \cite{MB}.
\end{rem}
\begin{rem}
  Orientations of cells in $K$ may be naturally chosen by invoking orientation
  of $f$, see \Cref{con:main_lem_morse}.
\end{rem}

\begin{rem}
\label{rem:simple_tors}
The fact that the mentioned homotopy equivalence (as well as all the others in
this subsection) is actually simple is folklore. Although not stated explicitly
in \cite{Mi1}, it follows readily from the proof given there. We will need this
fact in \Cref{sec:br_tors} for statements involving (Reidemeister) torsion. We
will denote general homotopy equivalence by $\simeq$ and, whenever we want to
emphasize that it is simple, we write $\she$.
\end{rem}

The key ingredient in the proof of \Cref{thm:main_thm_morse} is the following
construction (we continue using notations introduced in
\Cref{subs:morse_setup}). (For a topological space $X$, by $X \cup_\phi e^k$ we
mean $X$ with a $k$-cell attached along $\phi$.)
\begin{con}
\label{con:main_lem_morse}
For $s \in \{ 1, \dots, \#\Cr(f) \}$ let $r_s$ and $r_{s-1}$ be two
corresponding regular values of $f$ and let $k = \deg s$. We will now recall the
construction of
a continuous map $\phi: S^{k-1} \to M^{r_{s-1}}$ s.t.
$M^{r_s} \she M^{r_{s-1}} \cup_\phi e^k$.
\choicemantra
\end{con}
\begin{rem}
  We stress out that it's not claimed that the homotopy class of a map $\phi$
  satisfying the property that $M^{r_s} \she M^{r_{s-1}} \cup_\phi e^k$ is
  unique. This assertion is only true for the map $\phi$ constructed by the
  recipe given below.
\end{rem}
\triright We will only sketch the argument, for details see \cite{Mi1}. The
space $M^{r_{s-1}}$ is a smooth manifold with boundary $f^{-1}(r_{s-1})$. Choose
an antigradient-like vector field on $M$. Its flow produces a smooth map
$S^{k-1} \to f^{-1}(r_{s-1}) \subset M^{r_{s-1}}$, where the source is viewed as
a small sphere around critical point $c_s$. For a different gradient-like vector
field the resulting map will differ by an isotopy. This way one gets an
embedding $M^{r_{s-1}} \cup_\phi e^k \into M^{r_s}$. It is then shown to be a
simple homotopy equivalence.

Note that the sphere $S^{k-1}$ is naturally oriented since any maximal vector
subspace in $T_{c_s}M$ on which Hessian restricts as a negative-definite form is
oriented (see \Cref{subs:orient}). Therefore the cell $e^k$ is naturally
oriented too. \trileft

The next proposition is obvious.
\begin{prop}
  Although the map $\phi$ from \Cref{con:main_lem_morse} depends on some choices
  made along the way, its homotopy class is uniquely defined.
\end{prop}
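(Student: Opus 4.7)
The plan is to identify the choices involved in \Cref{con:main_lem_morse} and show that each choice lives in a path-connected space, and that $\phi$ depends continuously on it. Since a continuous family of attaching maps is in particular a homotopy, this yields well-definedness of the homotopy class of $\phi$.

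First I would enumerate the choices. There are two: (a) a gradient-like vector field $X$ for $-f$ on $M$; (b) a small sphere $S^{k-1}$ around $c_s$ whose points will be flowed down to $f^{-1}(r_{s-1})$. For fixed $X$, any two admissible small spheres around $c_s$ (sitting inside the negative eigenspace of $\mathrm{Hess}_{c_s} f$ as in \Cref{subs:orient}) are concentric and hence cobound an annulus inside the descending disk; flowing along $X$ realizes an explicit isotopy between the two corresponding attaching maps. This reduces the problem to varying $X$.

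Next I would handle the choice of $X$. The key observation is that the space $\mathcal V$ of gradient-like vector fields for $-f$ is convex (and thus path-connected): given $X_0, X_1 \in \mathcal V$ and $t \in [0,1]$, the combination $X_t = (1-t) X_0 + t X_1$ still satisfies $X_t(-f) > 0$ away from critical points (a convex inequality), and near each critical point one may, if necessary, modify by a cutoff to restore the standard Morse form without leaving the convex class, following the usual argument. Given a path $X_t \in \mathcal V$, the corresponding flows $\Phi^t$ depend smoothly on $t$ and on initial conditions, and since $X_t$ is transverse to the regular level $f^{-1}(r_{s-1})$, the first-hit time is a smooth function of $(t, p) \in [0,1] \times S^{k-1}$. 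The resulting family of attaching maps $\phi_{X_t} : S^{k-1} \to f^{-1}(r_{s-1})$ is then a continuous (indeed smooth) family, hence an isotopy between $\phi_{X_0}$ and $\phi_{X_1}$.

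The main obstacle I anticipate is the behaviour near the critical point $c_s$, where the flow is singular and where one has to make sense of ``flowing from a small sphere'' uniformly in the family parameter. I would address this by working inside a fixed Morse chart around $c_s$ in which every $X_t$ can be arranged to take the standard form; the sphere $S^{k-1}$ then sits at the boundary of the negative-eigenspace disk in this chart, and the explicit model flow makes continuity in $t$ manifest. Combining the two steps, any two maps produced by \Cref{con:main_lem_morse} are connected by an isotopy and therefore represent the same homotopy class, which is precisely the content of the proposition.
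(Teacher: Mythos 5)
The paper offers no written proof here---it declares the proposition obvious, and inside \Cref{con:main_lem_morse} simply asserts (deferring to \cite{Mi1}) that a different gradient-like vector field yields an isotopic map; your proposal is a correct spelling-out of exactly that standard argument, interpolating between vector fields and between small spheres. The only delicate point is the one you already flag yourself: the ``standard form near critical points'' condition is not literally convex, so the linear interpolation $X_t$ must be corrected in a fixed Morse chart near $c_s$ (and the small spheres for different $t$ identified there) before the flow argument produces the desired isotopy.
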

In order to get to \Cref{thm:main_thm_morse} one then proceeds with

\begin{con}
\label{con:cw_comp}
Let $s, r_s, r_{s-1}$ and $k$ be as in \Cref{con:main_lem_morse}. Suppose that
$M^{r_{s-1}} \she K$, where $K$ is some CW-complex. We will now recall the
construction of a cellular map $\psi: S^{k-1} \to K$ s.t.
$M^{r_s} \she K \cup_\psi e^k$ (note that r.h.s. is again a CW-complex).
\choicemantra
\end{con}
\triright Apply cellular approximation theorem to the map $\phi$ from
\Cref{con:main_lem_morse}. \trileft

Again, the next proposition is obvious.
\begin{prop}
  Although the map $\phi$ from \Cref{con:cw_comp} depends on some choices made
  along the way, its homotopy class is uniquely defined.
\end{prop}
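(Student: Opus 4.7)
The plan is to unpack exactly which choices enter \Cref{con:cw_comp} and verify that each one can only change the output $\psi$ (the proposition's $\phi$ appears to be a minor typo, since \Cref{con:cw_comp} produces $\psi$) up to homotopy. Three choices are involved: first, the map $\phi \colon S^{k-1} \to M^{r_{s-1}}$ coming from \Cref{con:main_lem_morse}; second, an identification (simple homotopy equivalence) $M^{r_{s-1}} \she K$; and third, a cellular approximation of the composition $S^{k-1} \to M^{r_{s-1}} \to K$.

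The first source of ambiguity is already handled by the preceding proposition: $\phi$ is well-defined up to homotopy. Composing with any homotopy equivalence $M^{r_{s-1}} \to K$ then yields a continuous map $S^{k-1} \to K$ whose homotopy class depends only on $\phi$ up to homotopy and on the homotopy class of that equivalence. In other words, up to homotopy of maps $S^{k-1} \to K$, the underlying continuous input to cellular approximation is canonical.

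For the third choice I would invoke the standard cellular approximation theorem in its relative form: given a continuous map $S^{k-1} \to K$, any two cellular maps homotopic to it are in fact homotopic through cellular maps (and, what we actually need, simply homotopic as continuous maps). Applied here, this means that two outputs $\psi_1, \psi_2$ produced from homotopic continuous inputs are themselves homotopic. Combining this with the previous paragraph shows that the homotopy class of $\psi$ depends only on the data $(f, s)$ and on the simple homotopy class of $K$, as required.

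The step that requires the most care is making sure that ``simple'' enters consistently throughout, since the statement of \Cref{con:cw_comp} asserts a simple homotopy equivalence $M^{r_s} \she K \cup_\psi e^k$ and not merely an ordinary one. Fortunately the construction already uses the embedding from \Cref{con:main_lem_morse} (which is simple), and attaching a cell along a homotopic map yields a homotopy equivalent, in fact simple homotopy equivalent, space (see \Cref{rem:simple_tors}). Hence no obstruction arises from simplicity, and the homotopy class of $\psi$ is indeed well-defined.
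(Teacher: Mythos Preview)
The paper does not actually give a proof here: the line preceding the proposition reads ``Again, the next proposition is obvious.'' Your argument is correct and supplies what the paper omits --- namely, that the homotopy class of $\phi$ from \Cref{con:main_lem_morse} is already well-defined by the preceding proposition, and that the cellular approximation theorem guarantees any two cellular approximations of a fixed continuous map are homotopic.

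Two small points of over-reach. First, the simple homotopy equivalence $M^{r_{s-1}} \she K$ is part of the \emph{input} to \Cref{con:cw_comp}, not a choice made within it; the proposition asserts uniqueness of $[\psi]$ given that fixed equivalence, so your second ``choice'' is not really in play (and indeed, different equivalences to the same $K$ would in general give different $[\psi]$). Second, your final paragraph about simplicity is beside the point: the proposition concerns only the homotopy class of $\psi$, not whether the resulting equivalence $M^{r_s} \she K \cup_\psi e^k$ is simple. That latter fact is a separate (folklore) claim already handled by \Cref{rem:simple_tors}. Neither issue affects the validity of your core argument.
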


\subs{Enhanced complex associated with a strong Morse function}
\label{subs:enh_comp_morse} In this subsection we discuss the following two
statements.
\begin{con} 
  \label{con:enh_comp_morse}
  Let $f$ be an oriented strong Morse function on $M$ and $\F$ be a field. We
  will now construct an enhanced complex $(\C, \kappa)$. \choicemantra
\end{con}
\begin{prop}
  \label{prop:enh_comp_morse}
  Although \Cref{con:enh_comp_morse} depends on some choices made along the
  way, the isomorphism class of an enhanced complex $(\C, \kappa)$ is uniquely
  defined.
\end{prop}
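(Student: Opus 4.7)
The natural strategy is to reduce the uniqueness claim to Theorem \ref{thm:classification}: show that the B-data of $(\C,\kappa)$ depends only on the oriented $f$ and $\F$, and conclude that the isomorphism class of $(\C,\kappa)$ is determined.

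First I would spell out the construction. Using Theorem \ref{thm:main_thm_morse}, assemble a CW-complex $K$ cell by cell via \Cref{con:cw_comp}, one cell per critical value. Writing $K^s$ for the subcomplex built from the first $s$ cells, set $\C\colonequals C^{\mathrm{cell}}_\bullet(K;\F)$ with filtration $\C^s\colonequals C^{\mathrm{cell}}_\bullet(K^s;\F)$. Each quotient $\sfrac{\C^s}{\C^{s-1}}$ is one-dimensional and concentrated in degree $\deg s$; the preferred generator $\kappa_s$ is the orientation class of the $s$-th cell, whose orientation is fixed by the oriented negative eigenspace of $\mathrm{Hess}_{c_s}(f)$ (see \Cref{subs:orient} and \Cref{con:main_lem_morse}). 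This makes $(\C,\kappa)$ into an enhanced complex in the sense of \Cref{def:enh_complex}.

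The choices involved sit inside \Cref{con:main_lem_morse} (gradient-like vector field) and \Cref{con:cw_comp} (cellular approximation). By \Cref{thm:classification}, isomorphism classes of enhanced complexes are classified by their B-data, so it is enough to verify that the B-data of $(\C,\kappa)$ agrees with the one extracted directly from the filtered singular chain complex in \Cref{con:bd_morse}, which manifestly depends only on $f$, its orientation, and $\F$. The comparison map is the chain equivalence coming from the simple homotopy equivalences $M^{r_s}\she K^s$ of \Cref{con:main_lem_morse,con:cw_comp}: it is a filtered quasi-isomorphism, compatible with the filtrations on both sides. Since B-data is computed from the connecting homomorphisms in the long exact sequences of the pairs $(\C^s,\C^{s-1})$ applied to the preferred generators (see \Cref{subs:bd}), the equality of B-data follows once one checks that the preferred generator of $\H_{\deg s}(\C^s,\C^{s-1})$ on the cellular side matches the preferred generator $o\otimes 1 \in \H_{\deg s}(M^{r_s},M^{r_{s-1}};\F)$ used on the singular side.

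I expect the main obstacle to be precisely this matching of preferred generators. Unwinding definitions, it reduces to the statement that the orientation of the attaching $k$-cell prescribed in \Cref{con:main_lem_morse} (coming from the oriented negative eigenspace at $c_s$) goes, under the isomorphism $\H_{\deg s}(K^s,K^{s-1};\F)\xto{\sim}\H_{\deg s}(M^{r_s},M^{r_{s-1}};\F)$ induced by $K^s\she M^{r_s}$, to the class $o\otimes 1$ chosen in \Cref{subs:morse_setup}. This is a naturality statement for two parallel incarnations of the same orientation datum and should follow from the excision/retraction arguments in \cite{Mi1}, but it does require careful bookkeeping. Everything else (independence of the gradient-like vector field, independence of the cellular approximation) follows from the fact that the homotopy class of the attaching map in \Cref{con:main_lem_morse,con:cw_comp} is unique, which is already recorded in the excerpt.
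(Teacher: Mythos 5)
Your proposal follows the paper's own argument essentially verbatim: reduce via \Cref{thm:classification} to showing the B-data is well-defined, then identify it with the B-data of \Cref{con:bd_morse} by matching filtrations and preferred generators through the natural isomorphisms $\H_k(\C^s)\simeq\H_k(M^{r_s})\simeq\H_k(\Csing^s)$. The generator-matching step you flag as the main obstacle is exactly the point the paper also relies on (both generators are topologically the same element of $\H_{\deg s}(M^{r_s},M^{r_{s-1}})$), so the proposal is correct and takes the same route.
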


We will first describe \Cref{con:enh_comp_morse}.

\triright Take any CW-complex $K$ constructed by the virtue of
\Cref{thm:main_thm_morse}. Its cells are naturally linearly ordered by the order
of critical values of $f$. Moreover, the first $s$ cells form a CW-subcomplex,
simple homotopy equivalent to $M^{r_s}$. Consider an algebraic complex $\C'$ (of
free abelian groups) associated with $K$. It has a preferred (ordered) basis $c$
given by cells (mind that they carry natural orientation since $f$ is oriented,
see \Cref{con:main_lem_morse}). Thus one has a $\Z$-enhanced complex
$(\C',\kappa(c))$.

The desired enhanced complex is now taken to be $(\C' \otimes \F,\kappa(c))$
(see \Cref{subs:bd,subs:Zenh_comp}). \trileft
\begin{rem}
\label{rem:choices_cw}
  \Cref{con:enh_comp_morse} produces not only an enhanced complex, but also its
  basis. The matrix of differential, however, does depend on the choices made
  (practically, a cellular approximation from \Cref{con:cw_comp}). From this
  viewpoint, \Cref{prop:enh_comp_morse} says that for any two choices, the
  corresponding matrices of differential are conjugate by an unitriangular (i.e.
  triangular with ones on the diagonal) base change. This can be pushed to the
  full proof, see \Cref{rem:cw_comp_expl}.
\end{rem}

\begin{proof}[Proof of \Cref{prop:enh_comp_morse}]
  By the classificational \Cref{thm:classification} it suffices to prove that
  B-data associated to $(\C, \kappa)$ is uniquely defined (see
  \Cref{rem:thm_classification}). We will do that by identifying it with a
  B-data constructed from $f$ in \Cref{subs:bd_morse}. We denote the complex of
  singular chains \mbox{(over $\F$)} mentioned there by $\Csing$ and by
  $\Csing^s$ we mean the subcomplex corresponding to subspace $M^{r_s}$. Recall
  that since $f$ is oriented the generator of
  $\H_{\deg s}(\Csing^s, \Csing^{s-1}) \simeq \F$ is chosen (see
  \Cref{subs:bd_morse}).

  First of all, the two gradings on the set $\{ 1, \dots, \# \Cr(f) \}$ coincide
  since they are defined in terms of indices of critical points of $f$. Next,
  for each $s$ homology vector spaces $\H_k(\C^s)$ and $\H_k(\Csing^s)$ are
  naturally isomorphic (for all $k$) since both of them compute $\H_k(M^{r_s})$.
  Moreover, their filtrations are the same (w.r.t. the mentioned isomorphism)
  since they are defined topologically in terms of inclusions
  $M^{r_t} \into M^{r_s}$ for various $t$. Finally, the chosen generators in
  $\H_{\deg s}(\Csing^s, \Csing^{s-1})$ and in $\H_{\deg s}(\C^s, \C^{s-1})$ are
  the same for the same reason: they are defined topologically as a certain
  element in $\H_{\deg s}(M^{r_s}, M^{r_{s-1}})$. This implies that enhancements
  on $\H_k(\C^s)$ and $\H_k(\Csing^s)$ are the same. The statement now follows,
  since B-data was defined in \Cref{subs:bd} in terms of enhancements \mbox{on
    $\H_k(\C^s)$} for various $k$ and $s$.
\end{proof}

\begin{rem}
  \label{rem:q_fp}
  It follows from \Cref{subs:cw_comp_morse} that B-data associated
  $(\C, \kappa)$ coincides with the one constructed in \Cref{subs:bd_morse}. It
  follows from \Cref{rem:field_ext} that it is enough to consider only
  \mbox{fields $\Q$} and $\F_p$ (note that this is not the case in
  \Cref{subs:rt_recall,subs:rt_bruhat}).
\end{rem}    
 
\begin{rem}
  \label{rem:cw_comp_expl}
  It's possible to prove \Cref{prop:enh_comp_morse} somewhat more explicitly
  without appealing to \Cref{thm:classification}. We will now sketch the
  argument. Consider two approximations $\psi_1$ and $\psi_2$ from
  \Cref{con:cw_comp}. Since they are homotopic, corresponding algebraic
  complexes associated to CW-complexes differ by a change of basis. This change
  of basis is unitriangular by construction. Arguing inductively on the number
  of cells, one obtains a unitriangular change of basis which turns one chain
  complex into another. This means precisely that two correspoding enhanced
  complexes are isomorphic.
\end{rem}
  
\begin{rem}
  \label{rem:diff_s_t}
  B-data stays unaltered if one replaces $f$ with
  $\phi \circ f \circ \psi$, where $\psi \colon M \to M$ is any diffeomorphism
  and $\phi \colon \R \to \R$ is a diffeomorphism preserving an orientation.
  Consequently, B-data stays the same under the continuous deformation
  of $f$ in the class of oriented strong Morse functions on $M$ (see
  \Cref{subs:gens_1p}).
\end{rem}

\subs{Invariant of a map between two manifolds equipped with Morse functions}
\label{subs:two}
In this subsection we discuss the following construction.

\begin{con}
  Let $M_1$ and $M_2$ be two manifolds equipped with oriented strong Morse
  functions $f_1$ and $f_2$ respectively. Let also $l \colon M_1 \to M_2$ be a
  continuous map. We will now construct a rook matrix $R_k$ of size
  $\dim \H_k(M_2) \times \dim \H_k(M_1)$ (for any $k$).
\end{con}
\triright By \Cref{subs:bd_morse} the function $f_1$ (resp. $f_2$) gives rise to
an enhancement on a homology vector space $\H_k(M_1)$ (resp. $\H_k(M_2)$).
Consider the induced map $l_k \colon \H_k(M_1) \to \H_k(M_2)$ and plug it into
\Cref{lem:class_space} to get the desired rook matrix. \trileft

Without choosing a particular orientations of strong Morse functions the
non-zero entries of $R_k$ are defined only up to a sign. Note that we didn't
make use of any complexes whatsoever.

\subs{Morse complex} \label{subs:morse_comp} In this subsection we describe how
Morse complex fits into our setting of enhanced complexes. In particular, we
give a certain description of a matrix of Morse differential (w.r.t. any
Riemannian metric) in terms of B-data.

Let $g$ be a generic Riemannian metric on $M$ and $f$ be an oriented strong
Morse function. Then one can define a Morse complex $\M(f, g)$ whose integral
homology is naturally isomorphic to that of $M$. It's a complex of free abelian
groups, formally generated by critical points. In this basis the differential
(mapping $k$-chains to $(k-1)$-chains) becomes a matrix $(\d_{i,j})$. The matrix
element $\d_{i,j}$ equals to the number of antigradient flowlines from
$j^\text{th}$ critical point of index $k$ to $i^\text{th}$ critical point of
index $k-1$, counted with appropriate signs. For brevity, we say that
$i^{\text{th}}$ critical point ``appears in the differential'' of $j^\text{th}$
critical point with coefficient $\d_{i,j}$.

\begin{rem}
  As we saw in \Cref{subs:cw_comp_morse} in order to construct a complex
  generated by critical points out of a function $f$ one has to make a choice of
  cellular approximation. On the other hand, in order to construct a Morse
  complex one has to choose a Riemannian metric. These two choices are actually
  very close to each other in the following sense.

  Given a function $f$ one can construct a handle decomposition s.t. each handle
  of index $k$ corresponds to some critical point of index $k$. The metric $g$
  specifies the way to modify this decomposition, such that each handle of index
  $k$ is attached to the union of handles of smaller index. This, in turn,
  allows one define the handle complex, which is precisely the Morse complex. On
  the other hand, one can collapse handles to cells and obtain a CW-complex.
\end{rem}

In our terms, one has obtained a $\Z$-enhanced complex $(\M(f,g), \kappa(c))$,
where $c$ is a basis consisting of critical points (see \Cref{subs:Zenh_comp}).
It follows directly that B-data of $(\M(f,g) \otimes \F, \kappa(c))$
coincides with that of $f$ constructed in \Cref{subs:bd_morse}. Therefore, by
\Cref{thm:classification}, enhanced complex $(\M(f,g) \otimes \F, \kappa(c))$ is
isomorphic to $(\C, \kappa)$ from \Cref{subs:enh_comp_morse}. So, practically,
one may use any option to extract B-data from a given $f$: either a
complex of singular chains, or a CW-complex or a Morse complex.

\begin{rem}
  Analogously to \Cref{rem:choices_cw}, the matrix of Morse differential does
  depend on Riemannian metric, while the isomorphism class of enhanced complex
  $(\M(f,g) \otimes \F, \kappa(c))$ doesn't. The same goes for $\Z$-enhanced
  \mbox{complex $(\M(f,g), \kappa(c))$}.
\end{rem}

For a B-data associated to $f$ let $R_k$ be the corresponding rook matrix of
size $\Cr_{k-1}(f) \times \Cr_k(f)$ (see \Cref{subs:bd}). In other words,
non-zero elements of $R_k$ equal to the Bruhat numbers on Barannikov pairs of
points of degrees $k$ and $k-1$. The next theorem follows readily from
\Cref{prop:matr_comp}.

\begin{thm}
  \label{thm:matr}
  Let $f$ be an oriented strong Morse function on a manifold $M$. Let also $R_k$
  be the rook matrix associated to $f$ over $\Q$ (for
  $k \in \{ 1, \dots, \dim M \}$). Then the matrix of Morse differential $\d_k$
  w.r.t. any Riemannian metric $g$ belongs to the set $\Ll(R_k)$.
\end{thm}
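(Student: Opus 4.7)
The plan is to reduce the statement directly to the linear-algebraic \Cref{prop:matr_comp} via the dictionary between Morse theory and $\Z$-enhanced complexes developed in \Cref{subs:morse_comp}. The key observation is that the Morse complex $\M(f,g)$, together with its preferred ordered basis $c$ given by critical points of $f$ (arranged in increasing order of critical values), is naturally a $\Z$-enhanced complex $(\M(f,g),\kappa(c))$: the filtration comes from sublevel sets, and the generator of each one-dimensional graded quotient is the corresponding critical point, canonically oriented because $f$ is oriented.

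The first step is to verify (as already explained in \Cref{subs:morse_comp}) that the B-data of the enhanced complex $(\M(f,g)\otimes\Q,\kappa(c))$ over $\Q$ agrees with the B-data of $f$ constructed in \Cref{subs:bd_morse}. By \Cref{thm:classification} the B-data is a full invariant of an enhanced complex up to isomorphism, so it suffices to compare on the level of filtered chain complexes: both compute the homology of the sublevel sets $M^{r_s}$ with the same preferred generators in $\H_{\deg s}(M^{r_s},M^{r_{s-1}})$, and B-data is extracted from exactly these data. In particular, the rook matrix $R_k$ of $f$ over $\Q$ (between points of degrees $k$ and $k-1$) coincides with the rook matrix $R_k$ appearing in the B-data of the enhanced complex $(\M(f,g)\otimes\Q,\kappa(c))$.

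Second, apply \Cref{prop:matr_comp} to the $\Z$-enhanced complex $(\M(f,g),\kappa(c))$ with its basis $c$: the matrix of the Morse differential $\d_k$ in this basis lies in $\Ll(R_k)$. Since this is precisely what is to be proved, the theorem follows. No step is a serious obstacle here; the only subtlety is bookkeeping the identification of B-data from the three possible sources (singular chains, a CW-decomposition, and the Morse complex), but this has been carried out in \Cref{subs:enh_comp_morse,subs:morse_comp}. The real content is packaged in the upstream results \Cref{lem:class_space}, \Cref{prop:matr_spaces} and \Cref{prop:matr_comp}; the present theorem is essentially their geometric shadow.
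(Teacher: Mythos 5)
Your proof is correct and follows the same route as the paper: the paper likewise identifies $(\M(f,g),\kappa(c))$ as a $\Z$-enhanced complex whose B-data over $\Q$ agrees with that of $f$ (via \Cref{subs:morse_comp} and \Cref{thm:classification}), and then derives the theorem directly from \Cref{prop:matr_comp}. No issues.
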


For example suppose that $f$ has a B-data as depicted in
\Cref{fig:b_data_example} and $k = 2$. Then the corresponding rook matrix and
general form of a matrix of a second Morse differential $P$ are
\[
  R_2 =
\begin{pmatrix}
    0 & 0 & 4 \\
    3 & 0 & 0 \\
    0 & 0 & 0 \\
    0 & 2 & 0 \\
\end{pmatrix}
, P =
\begin{pmatrix}
    * & * & * \\
    3 & * & * \\
    0 & * & * \\
    0 & 2 & * \\
\end{pmatrix}.
\]

Recall that weak Morse inequalities state that
$\# \Cr_k (f) \geq \dim \H_k(M; \F)$ (for any field $\F$). It is easy to see
that if $\# \Cr(f) = \sum_k \dim \H_k(M; \Q)$ then the Morse differential
(w.r.t. any metric) must be identically zero. The next corollary is applicable
when this is not the case.

\CorShortMatrRestate*

\begin{proof}
  By assumption there is at least one Barannikov pair of $f$ over $\Q$. Take any
  short one. The statement now follows from \Cref{cor:short_matr_comp}.
\end{proof}

\begin{rem}
  The first part of \Cref{cor:short_matr} can be proven without appealing to any
  Barannikov pairs and Bruhat numbers whatsoever. Indeed, if one unwraps all the
  definitions and constructions involved, they arrive at a proof which is based
  on techniques like exact sequences of a pair. 
\end{rem}

\subs{A few examples and properties}
\label{subs:ex_props}
In this subsection we quickly give several introductory examples and properties
of Bruhat numbers.
\begin{wrapfigure}{r}{2cm}
  \includegraphics[width=1cm]{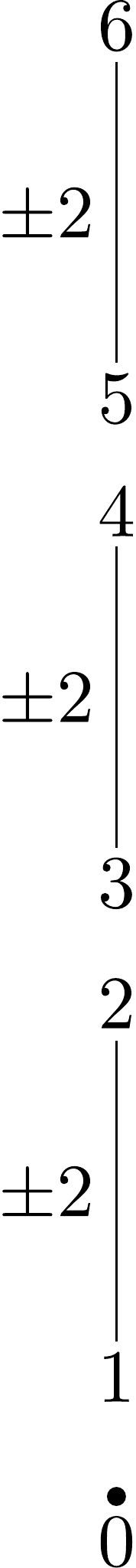}
  \caption{}
  \label{fig:RPn}
\end{wrapfigure}
\begin{enumerate}[label=
  \arabic*., wide, labelwidth=!, labelindent=0pt]
\item Let $f$ be a function on $\R\P^n$ which is a decent of
  $x_1^2 + 2x^2_2 + \ldots +(n+1)x_{n+1}^2$ defined on a unit sphere
  $S^n \subset \R^{n+1}$. It has $(n+1)$ critical points of all possible indices
  from 0 to $n$ (ordered by increasing of index). If $\ch \F = 2$ then all of
  them are homological. Otherwise, $(2k)^\text{th}$ and $(2k-1)^\text{th}$
  critical points form a Barannikov pair with Bruhat number $\pm 2$ (for any
  $k \in \{1, \dots, [n/2] \}$, where brackets denote the integral part). This
  is seen readily from any description of B-data, given either in
  \Cref{subs:bd_morse} or in \Cref{subs:enh_comp_morse}. See \Cref{fig:RPn} for
  an example for $n=6$.  
 
\item The proof of the next statement uses one-parameter Morse theory and
  therefore postponed till \Cref{subs:realiz}.
\begin{restatable}{prop}{PropRealizRestate}
  \label{prop:realiz}
  Let $\F$ be either $\Q$ or $\F_p$ and $\lambda \in \F^*$ be any non-zero
  number. Let also $M$ be any closed manifold s.t. $\dim M \geq 4$. Then one can 
  find an oriented strong Morse function $f$ on $M$ which has $\lambda$ as one
  of its Bruhat numbers.
\end{restatable}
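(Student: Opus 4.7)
The plan is to prove \Cref{prop:realiz} by constructing explicitly an oriented strong Morse function $f$ on $M$ whose B-data over $\F$ contains $\lambda$, using elementary one-parameter Cerf moves to engineer a specific $2\times 2$ block of a Morse differential. Since \Cref{cor:short_comp} forces Bruhat numbers on short pairs to be integer, a general non-integer $\lambda \in \Q^*$ must be realised as the Bruhat number of a \emph{long} pair, hence via the rook form of a non-unimodular integer block.

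First, I would produce a small integer matrix $X_\lambda$ whose rook form over $\F$ has $\lambda$ as one of its entries. For $\F = \Q$ with $\lambda = p/q$ in lowest terms, the matrix
\[
  X_\lambda = \begin{pmatrix} 1 & 0 \\ q & -p \end{pmatrix}
\]
has rook form $\left(\begin{smallmatrix} 0 & p/q \\ q & 0 \end{smallmatrix}\right)$ by the unitriangular row-column reduction of \Cref{con:rook}: the short pair carries the integer $q$ while the long pair carries $p/q = \lambda$. For $\F = \F_p$ and $\lambda \in \F_p^*$ I would simply take $X_\lambda$ to be the diagonal matrix with entries $l \in \{1,\dots,p-1\}$ lifting $\lambda$ and $1$, realising $\lambda$ already on a short pair.

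Second, I would construct $f$ such that for some $k$ with $1 \leq k \leq \dim M - 2$ (available since $\dim M \geq 4$) the following hold: (a) $f$ has four critical points $p_1, p_2$ of index $k$ and $q_1, q_2$ of index $k+1$ whose values are the four lowest critical values of $f$, in the order $f(p_1) < f(p_2) < f(q_1) < f(q_2)$; (b) the columns of $\d_{k+1}$ indexed by $q_1, q_2$ are supported only on $\{p_1, p_2\}$; (c) the resulting $2\times 2$ submatrix equals $X_\lambda$. Starting from an arbitrary oriented strong Morse function $f_0$ on $M$, I would reach such an $f$ through a generic one-parameter family: perform several births of cancelling $(k,k+1)$-pairs near the bottom of $f_0$ together with one auxiliary cancelling pair, then apply handle slides to set up the attaching spheres in the required configuration, and finally apply a death to cancel the auxiliary pair. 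The key mechanism is that the death of a cancelling pair implements a rank-one modification of the remaining differential block; combined with the $SL$-action supplied by handle slides, this makes any integer $2\times 2$ matrix realisable, regardless of its determinant.

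Finally, under (a)--(c) the left-to-right column processing of \Cref{con:rook} applied to the full matrix $\d_{k+1}$ of $f$ exhausts rows $p_1, p_2$ after the first two columns $q_1, q_2$ and reproduces the rook form of $X_\lambda$ in its first two columns; hence $\lambda$ appears as the Bruhat number of the Barannikov pair $(p_1, q_2)$, as required. The main obstacle is the second step: realising an arbitrary integer $2\times 2$ block (especially one with $|\det|>1$) as a submatrix of a Morse differential on the given $M$. Pure handle slides from $f_0$ preserve the $SL$-orbit of the block and cannot alone achieve a non-unimodular target, so the auxiliary-pair trick is essential; the hypothesis $\dim M \geq 4$ is used precisely to guarantee that all required handle slides and cancellations can be realised on $M$.
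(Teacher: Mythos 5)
Your step~1 is fine: the rook form of $\bigl(\begin{smallmatrix}1&0\\q&-p\end{smallmatrix}\bigr)$ is indeed $\bigl(\begin{smallmatrix}0&p/q\\q&0\end{smallmatrix}\bigr)$, and a closed block of this shape would put $\lambda=p/q$ on the long pair, consistently with \Cref{cor:short_comp}. The gap is in step~2, exactly at the sentence you yourself flag as the key mechanism. (A preliminary slip: condition (a) cannot hold literally, since the lowest critical value of a Morse function on a closed manifold always belongs to an index-$0$ point; you need $p_1,p_2,q_1,q_2$ to be lowest \emph{within their degrees}.) The serious problem is the unproved claim that a death of an auxiliary cancelling pair, combined with handle slides, realises an arbitrary integer $2\times2$ block. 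A death with unit pivot acts on the Morse differential by Gaussian elimination, i.e.\ by the Schur complement over a $\pm1$ entry. If the auxiliary system $\{p_1,p_2,y\}\times\{q_1,q_2,x\}$ is closed, its $3\times3$ matrix is produced by births of cancelling pairs and unimodular slides, hence has determinant $\pm1$ (the corresponding slice is $\Z$-acyclic), and the Schur complement over a unit pivot of a unimodular matrix is again unimodular: you land back in $\det=\pm1$, which cannot equal $-p$ for $|p|>1$. If the system is not closed, the elimination adds multiples of the column of $x$ to the columns of $q_1,q_2$ and smears their support onto other rows, destroying your condition (b). More structurally, a closed block with $|\det|=p>1$ forces $p$-torsion in $\H_k(M^a,M^b;\Z)$ for suitable regular sublevels, and births, deaths and metric changes alter such pairs only by adding or deleting a trivially attached handle (a free $\Z$ summand); the one move that does create torsion in sublevel pairs is precisely the one your move list omits, the Maxwell event of \Cref{prop:swap}.

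The paper's proof takes a genuinely different route that sidesteps the determinant obstruction entirely: two births (\Cref{prop:make_birth}) give a block $\bigl(\begin{smallmatrix}p&1\\1&q\end{smallmatrix}\bigr)$ with $pq-1=\pm1$; handle slides (\Cref{prop:handle_slide}) turn it into $\bigl(\begin{smallmatrix}p'&1\\1&l\end{smallmatrix}\bigr)$ with $[l]=\lambda$, still unimodular; and a single Maxwell event swapping the two index-$k$ points reorders it to $\bigl(\begin{smallmatrix}1&p'\\l&1\end{smallmatrix}\bigr)$. Nothing ever leaves $\det=\pm1$; what changes is the order of the basis, and the rook form of the reordered unimodular block has entries $l$ and $-1/l$. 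So the non-unit Bruhat number comes from reading a unimodular matrix in a new order, not from manufacturing a non-unimodular one. To repair your argument you would have to incorporate a swap; as written, your sequence of moves cannot reach the target block $X_\lambda$ once $|p|>1$.
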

In particular, Bruhat number over $\F = \Q$ may well be non-integer.

\item Recall that a Barannikov pair is called short if there are no pairs of the
  same degree lying inside it, see \Cref{subs:Zenh_comp}. The next statement
  follows directly from \Cref{cor:short_comp}.
  \begin{prop}  
    Over $\F = \Q$ Bruhat number on any short pair is integer.
  \end{prop}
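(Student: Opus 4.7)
The plan is to reduce the statement immediately to \Cref{cor:short_comp} via the $\Z$-enhanced complex produced by \Cref{con:enh_comp_morse}. First I would recall that given the oriented strong Morse function $f$ on $M$, \Cref{con:enh_comp_morse} builds a CW-complex $K$ from $f$ whose cells are canonically ordered and oriented, and then packages its cellular chain complex as a $\Z$-enhanced complex $(\C', \kappa(c))$, where the basis $c$ is given by the oriented cells. The enhanced complex over $\Q$ attached to $f$ is, by construction, $(\C' \otimes \Q, \kappa(c))$, and its B-data (in the sense of \Cref{subs:bd}) agrees with the B-data of $f$ from \Cref{subs:bd_morse}, as was verified in the proof of \Cref{prop:enh_comp_morse}.

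Next I would observe that the notion of a short pair is intrinsic to the B-data: $(s,t)$ is short if no pair $(s',t')$ of the same degree satisfies $s < s' < t' < t$. Since the B-data of $f$ coincides with that of the enhanced complex $(\C' \otimes \Q, \kappa(c))$, any short Barannikov pair of $f$ is a short pair of this enhanced complex.

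Finally, applying \Cref{cor:short_comp} to the $\Z$-enhanced complex $(\C', \kappa(c))$ yields that the Bruhat number on any short pair of $(\C' \otimes \Q, \kappa(c))$ lies in $\Z$. Combined with the identification of B-data above, this is exactly the claim. No step presents a genuine obstacle; the only point that requires a moment's care is making sure that ``short pair of $f$'' and ``short pair of the associated enhanced complex over $\Q$'' mean the same thing, which is immediate once one notes that shortness depends only on the combinatorial data $(N,U,L,H,b,\deg)$ and that this combinatorial data matches under the identification established in \Cref{prop:enh_comp_morse}.
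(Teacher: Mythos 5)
Your proposal is correct and is exactly the paper's intended argument: the paper simply states that the proposition ``follows directly from \Cref{cor:short_comp}'', relying on the identification of the B-data of $f$ with that of the $\Z$-enhanced complex from \Cref{con:enh_comp_morse} (via \Cref{prop:enh_comp_morse}), which you spell out explicitly. No discrepancy.
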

  In particular, if there are Barannikov pairs over $\Q$ at all, at least one of
  them must carry integer Bruhat number.

\item The first statement from \Cref{subs:q} translates straightforwardly to the
  topological setting via \Cref{subs:enh_comp_morse}. In this setting it
  expresses Bruhat numbers of oriented strong Morse function over $\Q$ in terms
  of torsion in (relative) integral homology of various sublevel sets.
\end{enumerate}

\subs{Poincare duality}
\label{subs:PD}
In this subsection prove the following

\begin{prop}
  \label{prop:PD}
  Let $M$ be closed and orientable and $f$ be an oriented strong Morse function
  on it. Let also $\F$ be a field. Then B-data for $-f$ is B-data for $f$ turned
  upside down. Bruhat numbers on pairs remain the same.
\end{prop}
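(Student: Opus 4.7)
The plan is to establish the statement in two pieces: first that the \emph{pairing} (i.e.\ the data $(N, \deg, U, L, H, b)$) of $-f$ is obtained from that of $f$ by the order-reversing involution $s\mapsto N+1-s$, and second that the Bruhat numbers on corresponding pairs are literally equal. Note that critical points of $f$ and $-f$ coincide as a subset of $M$, the linear order given by $-f$ is the reverse of that given by $f$, and Morse indices are related by $\deg_{-f}c=\dim M-\deg_f c$. So the combinatorial claim is exactly the assertion that pairs ``flip'': a pair $(u,l)$ for $f$ (with $u$ upper and $l$ lower) becomes the pair $(l,u)$ for $-f$.

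For the combinatorial claim I would invoke \Cref{rem:4term}, translated topologically at the end of \Cref{subs:bd_morse}: whether two critical values form a Barannikov pair depends only on the four dimensions $\dim \H_\bullet(M^{a\pm\eps},M^{b\pm\eps};\F)$. Sublevel sets for $-f$ are, up to homotopy, complements of sublevel sets for $f$ (if $c$ is a regular value, $\{-f\le -c\}=\{f\ge c\}$ deformation retracts onto the complement of $M^{c-\eps}$). Poincar\'e--Lefschetz duality for the closed orientable $n$-manifold $M$ gives $\H_k(M^{a+\eps},M^{b-\eps};\F)\cong \H^{n-k}(M\setminus M^{b-\eps},M\setminus M^{a+\eps};\F)$, and over a field cohomology and homology have equal dimension. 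Rewriting each of the four dimensions in the $4$-term criterion this way shows that $(a,b)$ (with $a>b$, critical for $f$) satisfies the criterion iff $(-b,-a)$ does with respect to $-f$. With the degree shift $k\mapsto n-k$, this is precisely the statement that the pairing is turned upside down.

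For the Bruhat numbers themselves I would pass to the Morse complex of \Cref{subs:morse_comp}. Fix a generic Riemannian metric $g$, which is simultaneously generic for $f$ and $-f$. Antigradient flowlines for $-f$ are, as unparametrised curves, the antigradient flowlines for $f$ traversed in reverse; consequently the coefficient of $y$ in $\partial^f x$ equals (up to sign) the coefficient of $x$ in $\partial^{-f}y$. Combined with the fact that the basis of critical points for $-f$ is the reverse of that for $f$, the matrix of $\partial^{-f}$ is the reverse-transpose of the matrix of $\partial^{f}$. Since reverse-transposing sends a rook matrix to a rook matrix with the same multiset of non-zero entries in the mirrored positions, and since the assignment pair $\leftrightarrow$ non-zero entry is preserved by this operation, \Cref{lem:class_space_coord} (uniqueness of the rook representative) yields the equality of Bruhat numbers on corresponding pairs.

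The main obstacle is sign bookkeeping. A priori Bruhat numbers are only canonically attached to pairs up to $\pm 1$ (see \Cref{rem:orient}), so it is not automatic that passing from $f$ to $-f$ preserves them on the nose rather than up to a sign. To close this gap I would spell out the orientation convention of \Cref{subs:orient}: at each critical point $p$, a choice of maximal subspace on which $\mathrm{Hess}_p f$ is negative-definite determines, via taking the orthogonal complement with respect to $g$, a maximal subspace on which $\mathrm{Hess}_p(-f)$ is negative-definite, and this produces an orientation of $-f$ from the given orientation of $f$. With this convention one verifies that the reverse-transpose relation between Morse matrices holds with the correct signs (equivalently, that the dual handle decomposition incidence numbers match exactly, not just up to sign). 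Once this is in place the rook-matrix argument above delivers the equality of Bruhat numbers verbatim.
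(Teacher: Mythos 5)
Your second step---writing the Morse differential of $-f$ as the (anti-)transpose of that of $f$ with respect to the reversed basis of critical points, and then invoking uniqueness of the rook normal form---is exactly the paper's proof. The paper makes explicit the one link you leave implicit: from $(P_{k-1}D_kP_k)^T=P_k^TD_k^TP_{k-1}^T$ one sees that a unitriangular reduction of the $f$-matrix to Barannikov form transposes to a unitriangular (for the reversed order) reduction of the $-f$-matrix, so the two rook matrices really are anti-transposes of each other and \Cref{lem:class_space_coord} applies. Your first step, deducing the flip of the pairing from the four-term criterion of \Cref{rem:4term} together with Poincar\'e--Lefschetz duality on sublevel and superlevel sets, is correct but redundant: the rook-matrix argument already determines the pairing along with the numbers, and it is the only part of your argument that sees the Bruhat numbers at all. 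On the sign question you are right that a convention is needed; the paper simply declares $-f$ ``naturally oriented'' (orienting the positive eigenspace of $\mathrm{Hess}_pf$ as the complement of the oriented negative eigenspace inside the oriented $T_pM$), which is essentially what you sketch---but note that an orthogonal complement carries no orientation by itself, so the orientability of $M$ enters here and not only in your duality step.
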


We need to first make some comments on the formulation. Since $M$ is orientable
and $f$ is oriented, the strong Morse function $-f$ is also naturally oriented.
Each critical point of $f$ of index $k$ is also a critical point of $-f$ of
index $n-k$, where $n = \dim M$. By turning the B-data upside down we
mean, formally speaking, precomposing all its ingredients with the automorphism
of the set $\{1, \dots, \# \Cr(f) \}$ given by $s \mapsto \# \Cr(f) - s$. Under
this operation upper and lower critical points swap their roles (while the
pairing remains the same). Note that the classical Poincare duality over the
field $\dim \H_k(M; \F) = \dim \H_{n-k}(M; \F)$ follows immediately from
\Cref{prop:PD}. Indeed, homological points remain homological after the
involution, but their indices change to complementary ones. The proof, however,
goes along the classical lines.

\begin{proof}[Proof of \Cref{prop:PD}]
  Choose a generic Riemannian metric $g$ on $M$. Matrix of differential in a
  Morse complex $\M(-f, g)$ is obtained from that in $\M(f, g)$ by
  transposition. Therefore, the same goes for the matrices of differential in
  the two corresponding based enhanced complexes. Consider now any unitriangular
  matrix $P_k$ which maps the basis $c$ (of critical points) of the first
  complex to a Barannikov basis. If follows from the formula
  $(P_{k-1} D_k P_k)^T = P_k^T D_k^T P_{k-1}^T$ that transposed matrix $P_k^T$
  (which is unitriangular w.r.t. reversed order) maps the initial basis of the
  second complex to the Barannikov one. Moreover, the matrices of differentials
  after these two changes of bases still differ by a transposition. The
  statement follows.
\end{proof}

\subs{On pairs of extremal degrees} \label{subs:pair_ext} Recall that by a
degree of a pair we mean degree of its lower point. In this subsection we prove

\begin{prop}
  \label{prop:pair_ext}
  Let $f$ be a strong Morse function on $M$ and $\F$ be a field.
  \begin{enumerate}[label=\arabic*)]
  \item The set of pairs of degree 0 is independent of $\F$. Bruhat number on any
    such pair is $\pm 1$.
  \item Suppose that $M$ is orientable. Then the set of pairs of degree
    $\dim M - 1$ is again independent of $\F$. Bruhat number on any such pair is
    again $\pm 1$.
    \end{enumerate}
\end{prop}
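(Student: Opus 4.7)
For part 1, I would work through the Morse complex machinery of \Cref{subs:morse_comp}, which produces a $\Z$-enhanced complex whose B-data over $\F$ coincides with that of $f$. The central observation is that the first Morse differential $\d_1 \colon C_1 \to C_0$, written in the basis of critical points, has an extremely rigid form: from each index-one critical point $c$ exactly two antigradient flowlines descend to index-zero critical points $p, q$ (possibly equal), and coherence of orientations on the unstable manifold of $c$ forces $\d c = \pm (e_p - e_q)$. In particular every entry of the matrix of $\d_1$ lies in $\{-1,0,1\}$, and this persists over any $\F$.

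I would then encode $\d_1$ as the boundary map of an oriented graph $G = (V, E)$ with $V = \Cr_0(f)$ and $E = \Cr_1(f)$, each edge joining the two endpoints of the corresponding saddle's unstable manifold. Running \Cref{con:rook} on $\d_1$ amounts to processing edges in the order of critical values and, at step $c$, reducing $\d c$ modulo the image of the previously processed edges. That image is precisely the standard graph-theoretic coboundary subspace of the initial subgraph $G^{<c}$, so the quotient $\F^V / \im(\d_1|_{G^{<c}})$ is canonically identified with $\F^{V/\sim_c}$, where $\sim_c$ identifies vertices lying in the same connected component of $G^{<c}$. Applying \Cref{con:surj_enh}, one sees that the induced enhancement on this quotient has its flag jumps precisely at those vertices that are first (in the given linear order) within their $\sim_c$-class, with enhancement elements being their classes. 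Given this, the case analysis is immediate: if both endpoints of $c$ lie in the same component of $G^{<c}$, then $\tilde A(\kappa_c) = 0$ and $c$ is homological; otherwise $\tilde A(\kappa_c) = \pm(\bar e_{p^\ast} - \bar e_{q^\ast})$ for $p^\ast, q^\ast$ the minimal vertices of the two components, and reading off height and coefficient yields a pair with Bruhat number $\pm 1$. Since connected components and their vertex orderings depend only on the graph $G$, which is the same over every $\F$, both the set of degree-$0$ pairs and the values $\pm 1$ of their Bruhat numbers are field-independent.

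For part 2 I would invoke \Cref{prop:PD}. When $M$ is orientable, $-f$ is naturally an oriented strong Morse function whose B-data is that of $f$ read upside down, with Bruhat numbers unchanged. Under this involution the lower and upper roles of each pair swap and indices transform as $k \mapsto n-k$ with $n = \dim M$, so pairs of degree $n-1$ for $f$ correspond bijectively (and with the same Bruhat numbers) to pairs of degree $0$ for $-f$. The conclusion now follows from applying part 1 to $-f$.

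\textbf{Main obstacle.} The substance lies in part 1: one must check carefully that the enhancement from \Cref{con:surj_enh} on $\F^V / \im(\d_1|_{G^{<c}})$ matches the combinatorial description in terms of components of $G^{<c}$. Once that identification is in place, both the $\F$-independence and the value $\pm 1$ of the Bruhat number are visible at a glance, and part 2 is a one-line reduction.
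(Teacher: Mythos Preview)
Your proposal is correct. For part 2 you do exactly what the paper does: invoke \Cref{prop:PD} to reduce to part 1 for $-f$.

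For part 1 the underlying idea is the same as the paper's---the image of an index-one generator is $\pm(e_p-e_q)$, so after passing to the relevant quotient one reads off height and coefficient purely from which connected component each endpoint lies in---but you take a somewhat different route to it. The paper works directly with the B-data extraction of \Cref{subs:bd}: it identifies the enhanced basis of $\H_0(M^{r_{s-1}})$ as the classes of the homological degree-zero points of $f|_{M^{r_{s-1}}}$ and then observes that the connecting homomorphism $\delta$ sends the generator of $\H_1(M^{r_s},M^{r_{s-1}})$ to $[a]-[b]$. You instead pass through the Morse complex of \Cref{subs:morse_comp} and the rook-matrix recipe of \Cref{con:rook} (the alternative extraction noted in \Cref{rem:bd_class_space}), organizing the argument via the graph of index-one unstable manifolds. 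Your route is more explicit about why the pairing is field-independent (it is visibly the connectivity data of $G$), at the cost of having to verify that the enhancement from \Cref{con:surj_enh} on $\F^V/\im(\d_1|_{G^{<c}})$ matches the combinatorial description; the paper's route is shorter because the connecting-homomorphism picture makes the form $[a]-[b]$ immediate without unpacking \Cref{con:rook}.
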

\begin{proof}
  Fix any $s \in \{ 1, \dots, \# \Cr(f) \}$ s.t. $c_s \in U$ (i.e. $c_s$ is an
  upper point in a pair) and $\deg s = 1$. We need to prove that
  $\lambda(s) = \pm 1$. Recall that $\{ r_s \}$ is a set of regular values of
  $f$ and let $d =\dim \H_0(M^{r_{s-1}})$. Let also $i_1 < \ldots < i_d$ be the
  ordered sequence of numbers which comprise the set
  $\{ t \in H^{s-1} | \deg t = 0 \}$, where $H^{s-1}$ is the set $H$ from the
  Brannikov data for the function $f|_{M^{r_{s-1}}}$. In other words,
  $(c_{i_1}, \ldots, c_{i_d})$ is the ordered set of homological points of
  degree zero for $f|_{M^{r_{s-1}}}$. It follows from
  \Cref{con:enhancement_on_h} that the sequence $([c_{i_1}], \ldots, [c_{i_d}])$
  is a basis (up to signs, which are irrelevant to the statement) of an enhanced
  vector space $\H_0(M^{r_{s-1}})$ (square brackets denote taking the homology
  class).

  On the other hand, the connecting homomorphism
  $\delta \colon \H_1(M^{r_s}, M^{r_{s-1}}) \to \H_0(M^{r_{s-1}})$ maps the
  chosen generator, represented by an oriented segment, to $[a] - [b]$, where
  $a$ and $b$ are the endpoints. The first statement now follows. The second one
  is obtained from it via the Poincare duality (\Cref{prop:PD}).
\end{proof}

\begin{rem}
  Orientability assumption in the second statement of \Cref{prop:pair_ext} is
  crucial. Indeed, the conclusion fails already for $M = \R\P^2$ and $\F = \Q$,
  see \Cref{subs:ex_props}.
\end{rem}

\begin{rem}
  Since the set of pairs from \Cref{prop:pair_ext} is independent of the field
  $\F$ one is tempted to find an alternative definition which doesn't involve
  $\F$. Indeed, it can be proven that the mentioned set may be recovered from
  the Kronrod-Reeb \cite{Kronrod,Reeb} graph of $f$ (see, for example,
  \cite{distReeb}). Moreover, the Kronrod-Reeb graph ``sees more'' than the set
  of pairs of extremal indices in a sense that one can find two functions on the
  same manifold with different Kronrod-Reeb graphs, but identical mentioned sets
  of pairs. The reason is, roughly, that B-data keeps track of filtration on
  homology induced by $f$, while Kronrod-Reeb graph keeps track of a canonical
  basis in $\H_0$ (or $\H_{\dim M}$) of the sublevel set.
\end{rem}

\section{Bruhat numbers and the theory of torsions}
\label{sec:br_tors}
As we saw in \Cref{prop:realiz} any number may appear as a Bruhat number of some
function; in a sense, there is no control over the individual Bruhat number.
However, sometimes the alternating product of all these numbers turns out to be
independent of $f$. Thus this product depends only on the manifold $M$. In the
present section we make this statement precise (in \Cref{subs:tors}) and provide
a framework where the mentioned product of Bruhat numbers equals to the
Reidemeister torsion of $M$ (in \Cref{subs:rt_bruhat}).  

\subs{Torsion of a Morse function}
\label{subs:tors}
In this subsection we set the stage for the further results.

\begin{defin}
\label{def:tors}
  Let $f$ be an oriented strong Morse function on $M$ and $\F$ be a field. Let
  also $\sigma$ be a permutation from \Cref{con:perm} (and $(-1)^\sigma$ its
  sign). The number
\[
  \tau(f, \F)=(-1)^\sigma \prod_{s \in U}\lambda(s)^{(-1)^{\deg s}} \in \F^*
\]
is called the \emph{torsion} of $f$ over $\F$.
\end{defin}

We refer to the r.h.s. as ``alternating product'' of all Bruhat numbers, in
analogy with alternating sum, which is used to define Euler characteristic.
Actually, torsion is very much similar to the Euler characteristic, see
\cite{Mnev}. We will simply write $\tau$ when its ingredients are understood. We
will now link \Cref{def:tors} with the classical notion of torsion (see a broad
yet concise book \cite{Tur} for an in-depth discussion).

By results from \Cref{subs:cw_comp_morse} starting from $f$ one can construct a
(non-unique) CW-complex $X$ which is simple-homotopy equivalent to $M$. So, from
the viewpoint of torsion theory $X$ and $M$ are the same (see
\Cref{subs:rt_recall}). Next, in \Cref{subs:enh_comp_morse} we used $X$ to
construct a (unique up to isomorphism) enhanced complex $(\C, \kappa)$. Further,
in \Cref{subs:tors_enh} we studied the torsion (defined in a classical way) of
any enhanced complex. By the very definitions it is a topological torsion of $X$
(and, therefore, $M$) w.r.t. to a certain basis in homology (see
\Cref{rem:tors}). Finally, \Cref{prop:tors_enh} identified the torsion (defined
in a classical way) with the formula from \Cref{def:tors}.

\begin{rem}
\label{rem:tors}
Here is a more concise way of saying the same. Take enhancement on a homology
$\H_k(M)$ given by $f$. Take any basis of this enhancement. Any two such bases
differ by a unitriangular matrix. Therefore, they are equivalent (in the sense
of \Cref{subs:tors_enh}). Thus the torsion of $M$ w.r.t. any chosen basis of
enhancement on $\H_k(M)$ is the same. This is precisely $\tau$.
\end{rem}

\begin{rem}
  In this section we will actually be interested only in the number
  $\pm \tau \in \sfrac{\F^*}{\pm 1}$. So the reader may temporarily disregard
  the permutation $\sigma$ and orientation of $f$. The sign will be important
  later in \Cref{sec:one_par}.
\end{rem}

\ThmNoHomRestate*

The proof requires bifurcation analysis and therefore postponed till
\Cref{subs:no_hom}. For example, taking $M$ to be $\R\P^n$ one sees that
$\tau(f,\Q) = \pm 2^{[n/2]}$, where brackets denote integral part. Indeed, one
has to calculate such a $\tau$ for some particular Morse function on $\R\P^n$.
They do so for a standart one from \Cref{subs:ex_props}.

\begin{rem}
  Since in \Cref{thm:no_hom} the number $\pm \tau(f, \F)$ turns out to be
  independent of $f$ one is tempted to give an alternative definition which
  doesn't involve $f$. We will now sketch the construction. Recall that the
  torsion is defined whenever there is a chosen basis in homology (or, at least,
  an equivalence class of such a basis). Usually in topology there's no
  canonical choice of such a basis, so one studies torsion in the setting where
  there's no homology at all. However, in $\H_0(M)$ and $\H_{\dim M}(M)$ there
  is an obvious distinguished basis. The number $\pm \tau(f, \F)$ is the torsion
  w.r.t. to it.
\end{rem}

\subs{Reidemeister torsion: recallment}
\label{subs:rt_recall}
In this subsection we briefly recall the notion of Reidemeister torsion (see
\cite{Tur} for details).

For a topological space $X$ let $\pi$ be its fundamental group and let
$\tilde{X} \to X$ be a universal covering. Choose a CW-decomposition of $X$
and denote the corresponding algebraic complex of free abelian groups by $\C$.
Consider the lift of the CW-structure to $\tilde{X}$. By the virtue of an
action of $\pi$ on its cells one construct a complex of free (right)
$\Z[\pi]$-modules $\tilde{\C}$ (recall that $\Z[\pi]$ is an integral group
ring of $\pi$). The rank of $k$-chains still equals to the number of $k$-cells
in $X$. Moreover, one can choose a basis in $\tilde{\C}$ by choosing any
lift of each cell. Therefore, each element of this basis is defined up to
multiplication by elements of $\pi$.

Let now $\rho \colon \Z[\pi] \to \F$ be a ring homomorphism. This gives $\F$ the
structure of a left $\Z[\pi]$-module by the formula $r \cdot x = \rho(r)x$,
where $r \in \Z[\pi], x \in \F$. On the other hand, $\F$ is also a right module
over itself. Putting all this together one now considers
$\tilde{\C} \otimes_\rho \F$, which is a complex of vector spaces over $\F$.
Moreover, this complex carries a basis, each element of which is defined up to
multiplication by elements of $\rho(\pi)$ (which is a multiplicative subgroup of
$\F^*$). The homology of $\tilde{\C} \otimes_\rho \F$ is called $\rho$-twisted
homology of $X$ and denoted here by $\H_*(X; \rho)$. If one is given an
equivalence class of its basis (in a sense of \Cref{subs:tors_enh}) then they
can consider torsion $\tau(X, \rho)$ of the complex of twisted chains, which
lives in $\sfrac{\F^*}{\rho(\pm \pi)}$. The sign ambiguity is due to several
things: \begin{enumerate*}[label=\arabic*)] \item there is no canonical
  orientation of cells of $X$, \item there is also no canonical linear order on
  them, \item there is no canonical CW-decomposition after all.\end{enumerate*}
If twisted homology vanishes, $\tau(X, \rho)$ is called the Reidemeister torsion
of $X$ w.r.t. $\rho$. It is known to be independent of CW-decomposition and to
be stable under simple homotopy equivalences. The theorem of Chapman
\cite{Chapman} states that homeomorphism is a simple homotopy equivalence.

\begin{rem}
  Practically, passing from $\tilde{\C}$ to $\tilde{\C} \otimes_\rho \F$
  means the following. Write the differential of $\tilde{\C}$ in some basis
  (for example, in the basis of lifts of cells) to get a matrix with
  coefficients in $\Z[\pi]$. Replace each coefficient $x$ with $\rho(x)$ and
  consider the resulting matrix as a map between vector spaces over $\F$.
\end{rem} 

\begin{rem}
  Traditionally torsion theory is said to be born in the beginning of
  $20^\text{th}$ century by the virtue of works of Reidemeister and Franz. We
  note that, quite surprisingly, the work of Cayley \cite{Cayley} contains some
  of the first torsion-theoretic ideas while staying in the purely algebraic
  setting.
\end{rem}

\subs{Reidemeister torsion and Bruhat numbers}
\label{subs:rt_bruhat}
In this subsection we introduce the notion of a twisted B-data and show
that the alternating product of its Bruhat numbers equals to the Reidemeister
torsion, whenever the latter is defined (see \Cref{thm:rt}). In particular, this
product is independent of a function.

Let $G$ be a multiplicative subgroup of $\F^*$ and $V$ be a vector space over
$\F$. By $\sfrac{V}{G}$ we will denote a set, which is a quotient of $V$ by the
natural action of $G$.

\begin{defin}
  An \emph{enhancement up to $G$ on a vector space} $V$ is a choice of two
  structures:
\begin{enumerate}[label=\arabic*)]
\item a full flag on $V$, i.e. a sequence of subspaces
  $0=V^{0}\subset V^{1}\subset \ldots \subset V^{\dim V}=V$ s.t.
  $\dim(\sfrac{V^{s}}{V^{s-1}})=1$, $s \in \{ 1, \dots, \dim V \}$;
\item a non-zero element $\kappa_{s}$ in a quotient set
  $\sfrac{(\sfrac{V^{s}}{V^{s-1}})}{G}$, $s \in \{ 1, \dots, \dim V \}$.
\end{enumerate}
Enhancement up to $G$ is still denoted by $(V,\kappa)$.
\end{defin}

Definitions of isomorphism between two such vector spaces as well of the complex
enhanced up to $G$ go exactly in the same manner as in the usual case. Moreover,
all the major statements from \Cref{sec:enh_spaces,sec:enh_comp} translate
readily to this new setting, with the only following exception. The non-zero
elements of rook matrix from \Cref{sec:enh_spaces} are only defined up to
multiplication by elements from $G$. Consequently, Bruhat numbers of a complex
enhanced up to $G$ now live in the quotient set $\sfrac{\F^*}{G}$.

If $f$ is a strong Morse function and $\F$ is a field of characteristic not two,
then $f$ defines a complex enhanced up to $\Z_2 = \{ \pm 1 \} \subset \F^*$ (see
\Cref{rem:orient}).

\begin{con}
  \label{con:enh_up_to}
  Let $f$ be an oriented strong Morse function on a manifold $M$, $\F$ be a
  field and $\rho \colon \Z[\pi] \to \F$ be a homomorphism of rings. We will now
  construct an isomorphism class of a complex enhanced up to $\rho(\pi)$.
\end{con}
\triright Apply \Cref{con:cw_comp} to $f$ to get a CW-complex $X$, simple
homotopy equivalent to $M$. Lift the CW-structure to the universal cover
$\tilde{X}$, choose any preimage of each cell and consider the corresponding
algebraic complex $\tilde{\C}$ of free $\Z[\pi]$-modules. It's basis is defined
up to action of $\pi$ and naturally linearly ordered. Moreover, the matrix of
differential is upper triangular w.r.t. this order, since so is differential in
$\C$. In particular, the span of the first $s$ basis elements is a subcomplex
(for $s \in \{1, \dots, \# \Cr(f) \}$).

Consider now the complex $\tilde{\C} \otimes_\rho \F$. It inherits a linearly
ordered basis which is defined up to an action of $\rho(\pi)$. The desired
complex enhanced up to $\rho(\pi)$ is now taken to be the one associated with
this basis (see the end of the \Cref{subs:bd}).

It remains to prove that obtained complex is well-defined up to an isomorphism.
Indeed, let $X'$ be another CW-complex obtained by the virtue of
\Cref{con:cw_comp}. Matrices of cellular differentials (in any degree) in these
two complexes are conjugated by a unitriangular matrix \mbox{over $\Z$} (see
\Cref{rem:cw_comp_expl}). Therefore, the matrices of differentials in
$\tilde{\C}$ and $\tilde{\C'}$ are conjugated by a triangular matrix with
elements from $\pi$ on the diagonal. The desired statement follows. \trileft

Consequently, given a data as in the \Cref{con:enh_up_to} one can construct
Barannikov pairs and Bruhat numbers, which are elements of
$\sfrac{\F^*}{\rho(\pi)}$ (without choosing a particular orientation of $f$
these numbers live in $\sfrac{\F^*}{\rho(\pm \pi)}$). To emphasize the presence
of $\rho$ we say ``twisted Barannikov pairs'' and ``twisted Bruhat numbers''.
One then defines torsion $\tau(f, \rho)$ of $f$ exactly as in \Cref{def:tors}.
Again, as in \Cref{subs:rt_recall}, \Cref{prop:tors_enh} justifies the name. In
short, the alternating product of twisted Bruhat numbers of $f$ equals to the
torsion of $M$ w.r.t. to a certain basis of the vector space
$\H_\bullet(M; \rho)$ defined by $f$. Generally, this basis and, consequently,
$\tau(f, \rho)$ may well depend on $f$; this basis is even non-uniquely defined,
but this arbitrariness doesn't affect $\tau(f, \rho)$.

However, combining all of the above with \Cref{subs:rt_recall} one gets

\ThmRTRestate*

\section{One-parameter Morse theory}
\label{sec:one_par}

One-parameter Morse theory deals with generic paths (in other words,
one-parameter families) in the space of all smooth functions on $M$. The
endpoints of a generic path are strong Morse functions~--- this is essentially
the statement that strong Morse functions form an open dense subspace. However,
finitely many points of such a path may fail to be either strong or Morse
functions. It is exactly at these points where the B-data associated with a
strong Morse function changes. In this section we describe how exactly these
changes look like (see \Cref{subs:bd_families}). This allows to prove some
statements from the previous sections (see \Cref{subs:no_hom}). On the other
hand, this also enables us to reprove a theorem of Akhmetev-Cencelj-Repovs
\cite{Akhm} in greater generality (see \Cref{subs:akh}).

\subs{Generalities on one-parameter Morse theory} \label{subs:gens_1p} In this
subsection we recall foundations of one-parameter Morse theory, initiated by
Cerf \cite{Cerf}.

A path in the space of functions on $M$ is practically a map
$F \colon M \times [-1, 1] \to \R$. Let $t$ be a coordinate along $[-1, 1]$,
which we occasionally refer to as ``time''. Define the function
$f_t \colon M \to \R$ by $f_t(x) \colonequals F(x, t)$. For convenience we will
write $\{ f_t \}$ instead of $F$. By a point of a path $\{f_t \}$ we will mean a
function $f_{t_0}$ for some particular $t_0 \in [-1, 1]$. Fix a generic path
$\{ f_t \}$ once and for all throughout this section (see \cite{Cerf} for the
precise definition of genericity). Its endpoints $f_{-1}$ and $f_1$ are strong
Morse functions on $M$. Moreover, the same holds for all but finitely many
points of $\{ f_t \}$. The rest of this subsection is devoted to describing what
changes may occur to a function at these points.

We first introduce a couple of definitions. One says that two strong Morse
functions $f$ and $g$ are isotopic if there exists a diffeomorphism $\phi$
(resp. $\psi$) of $\R$ (resp. $M$) isotopic to the identity s.t.
$g = \phi \circ f \circ \psi$. Roughly, isotopic functions represent the same
object from the viewpoint of Morse theory (see \Cref{rem:diff_s_t}).
Analogously, two paths $\{ f_t \}$ and $\{ g_t \}$ are said to be equivalent if
there exists an isotopy $\{ \phi_t \}: \R \times [-1, 1] \to \R$ (resp.
$\{ \psi_t \}: M \times [-1, 1] \to M$) s.t.
$g_t = \phi_t \circ f_t \circ \psi_t$ ($\phi_0 = \id, \psi_0 = \id$).

It is a folklore result that if path $\{ f_t \}$ consists only of strong Morse
functions then it is equivalent to a constant path. See \cite{ConCerf} for a
rigourous proof. The following description of changes of a strong Morse function
along a general generic path is to be understood as description of a certain
explicit representative in the equivalence class of a path in question.

We will depict paths of functions in the following manner. The Cerf diagram of a
path $\{ f_t \}$ is a subset of $[-1, 1] \times \R$ consisting of points
$(t, x)$ s.t. $x$ is a critical value of $f_t$. Topologically it is a set of
(possibly self-intersecting and non-closed) curves in the plane.

As proven in \cite{Cerf} in a generic one-parameter family there are two
possible changes of isotopy class of a strong Morse function, which we call
events. (Since there are only finitely many of them anyway, we assume for
convenience that $f_t$ is strong Morse for all $t$ except for a single value
$t = 0$.)
\begin{enumerate}[label=\arabic*)]
\item The function $f_0$ is strong, but non-Morse. This case is given by the
  local formula
  \[
    f_t(x_1, \dots, x_n) = x_1^3 \pm tx_1 + Q(x_2, \dots, x_n),
  \]
  where $(x_1, \dots, x_n)$ is some small coordinate neighborhood around the
  non-Morse critical point of $f_0$ (outside of a bit bigger neighborhood the
  function doesn't change at all) and $Q$ is a non-degenerate quadratic form. At
  the moment $t = 0$ the birth/death (depending on the sign) of two points of
  neighboring indices happens (the lower index is the index of $Q$). On a Cerf
  diagram this corresponds to a (left or right) cusp. This event is called
  birth/death event.
\item The function $f_0$ is Morse, but not strong. This happens when two
  critical values collide. Outside of small neighborhoods around two
  corresponding critical points the function doesn't change at all; moreover,
  critical points themselves don't move along the path.
  On the Cerf diagram this corresponds to a transversal self-intersection; in a
  sense a pair of critical values is swapped. The space on non-strong functions
  is sometimes called Maxwell stratum. So, we call this event a Maxwell event.
\end{enumerate}

\begin{rem}
  \label{rem:crit_bij}
  Note that in both cases there are exactly two distinguished critical points:
  either two newborn/about to die points or a couple with swapping critical
  values. We will refer to them as ``critical points involved in event''. They
  go one straight after another in the natural linear order. All the other
  critical points of $f_{-1}$ and of $f_1$ are in natural bijection. We will
  always keep this bijection in mind without mentioning it explicitly.
\end{rem}

Now we may describe a Cerf diagram a bit more precisely: it is a set of plane
arcs (smooth in the interior) whose endpoints are either at cusps or has $t$
coordinate equal to $\pm 1$. These arcs don't have vertical tangencies and may
self-intersect. To each arc an integer number is assigned, namely the index of
any critical point on it (mind that for a generic point in a path critical
points are in bijection with critical values). From the viewpoint of
\Cref{thm:main_thm_morse} each arc corresponds to a cell in the CW-complex
obtained via $f_t$. Birth/death event translates to birth/death of two cells of
neighboring dimensions s.t. one appears in the cellular differential of another
one with coefficient $\pm 1$. Note that this is exactly the building block of
the simple homotopy equivalence.

\subs{Any Bruhat number is realizable}
\label{subs:realiz}
In this subsection we prove

\PropRealizRestate*

The plan would be to construct a generic path of functions on $M$ which starts
with any strong Morse function and ends with the one satisfying the desired
property. The tools for constructing such a path were essentially developed by
Smale and restated in Morse-theoretical terms by Milnor in \cite{Mi2}, which is 
our main reference here. In the following three statements one is given a strong
Morse function $f_{-1}$ on a manifold $M$, equipped with a generic Riemannian
metric $g$. Recall from \Cref{subs:morse_comp} that $g$ gives rise to a Morse
complex, formally spanned by critical points. We write $\d_k$ for its
differential, which maps $k$-chains to $(k-1)$-chains. The following two
operations alter the function, but doesn't change the metric.

\begin{prop}
  \label{prop:make_birth}
  Given any $k \in \{ 0, \dots, \dim M - 1 \}$, one can find a generic path
  $\{ f_t \}$ which contains a single event, namely a birth event. The indices
  of newborn points are $k$ and $k+1$ and they lie in the small neighborhood of
  any regular point of $f_{-1}$ chosen in advance. Moreover, if $c_s$ and $c_t$
  are two critical points not involved in the event, then one appears in the
  Morse differential of another with the same coefficient for $f_{-1}$ and
  $f_1$.
\end{prop}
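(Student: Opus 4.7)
The plan is to construct the desired path by a purely local modification of $f_{-1}$ in a small ball around the chosen regular point, leaving the function (and hence the metric and all flowlines) unchanged outside. This is exactly the standard Cerf-theoretic construction of a birth: near a regular point $p$ of $f_{-1}$, by the submersion form of the implicit function theorem, one can choose coordinates $(x_1, \dots, x_n)$ in a small ball $B \subset M$ around $p$ in which $f_{-1} = f_{-1}(p) + x_1$. Pick a non-degenerate quadratic form $Q(x_2, \dots, x_n)$ of index $k$ and a smooth bump function $\beta$ supported in $B$ equal to $1$ on a smaller concentric ball. Then define
\[
  f_t(x) = f_{-1}(x) + \beta(x) \bigl( \eps \, x_1^3 + \eps \, Q(x_2,\dots,x_n) - (t+1) \eps' x_1 \bigr)
\]
for suitable small $\eps, \eps' > 0$ and for $x \in B$, and $f_t = f_{-1}$ outside $B$. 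With $\eps, \eps'$ chosen small relative to the size of $B$, the only critical points of $f_t$ inside $B$ are the standard ones of the universal birth model $x_1^3 - s x_1 + Q$, and they are born at some interior time which we may reparametrize to be $t=0$; for $t>0$ they have indices $k$ and $k+1$, and for $t < 0$ there are none.

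Second, I would check that the resulting path is generic in the sense of Cerf. Outside $B$ the function does not change at all, and no other events can occur there. Inside $B$, the local model is precisely the normal form of a birth event listed in \Cref{subs:gens_1p}, so the event at $t=0$ is a generic birth/death; no two critical values of $f_t$ collide (after possibly shrinking $\eps'$ so that the two new critical values remain separated from all old ones throughout the path), so no Maxwell event occurs either. The two new critical points appear arbitrarily close to $p$, as required.

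For the last clause — that the Morse differential coefficients between pre-existing critical points are the same for $f_{-1}$ and $f_1$ — the key point is that the Riemannian metric $g$ is fixed, and the antigradient vector field $-\nabla_g f_t$ differs from $-\nabla_g f_{-1}$ only inside $B$. The old critical points and their local stable/unstable disks are unchanged. A Morse differential coefficient $\langle \d_k c_t, c_s \rangle$ counts transverse intersections of $W^u(c_t)$ with $W^s(c_s)$ on the regular level $f^{-1}(r)$ for an intermediate regular value $r$. For $\eps$ small enough, any broken or unbroken trajectory of $-\nabla_g f_{-1}$ between $c_t$ and $c_s$ either misses $B$ entirely (and is therefore unchanged) or crosses $B$ in a short arc that perturbs continuously; transversality is an open condition, so these flowlines persist in $f_1$ with the same algebraic count. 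Conversely, no new trajectories between old critical points can appear: a compactness argument shows that any putative new trajectory would have to limit, as $\eps \to 0$, to a broken trajectory of $-\nabla_g f_{-1}$ that passes through $p$, but $p$ is a regular point not lying on the finitely many flowlines between old critical points (we may adjust the choice of $p$ slightly within its neighborhood to guarantee this).

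The main obstacle is the last argument: controlling what happens to flowlines that traverse the perturbation region $B$, and in particular ruling out ``spurious'' new flowlines between pre-existing critical points that go through the newly born handle. This is a standard issue and is handled in \cite{Mi2} by shrinking $B$ (equivalently, $\eps$) and invoking the persistence of transverse intersections under small $C^1$-perturbations of the vector field off of a compact set; I would cite that reference for the technical estimates rather than reprove them here.
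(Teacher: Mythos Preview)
The paper does not actually prove this proposition: it is stated without proof as one of three standard tools (\Cref{prop:make_birth,prop:swap,prop:handle_slide}) attributed wholesale to Milnor \cite{Mi2}. So there is no ``paper's own proof'' to compare against beyond a citation. Your proposal is a reasonable expansion of what that citation points to --- the standard local birth construction plus a flowline-persistence argument --- and the overall strategy is correct.

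One technical slip: your displayed formula for $f_t$ does not actually produce a birth. You add $\beta(x)\bigl(\eps x_1^3 + \eps Q - (t+1)\eps' x_1\bigr)$ to $f_{-1} = f_{-1}(p) + x_1$, giving a linear coefficient $1 - (t+1)\eps'$ in $x_1$. For small $\eps'$ this never vanishes on $t \in [-1,1]$, so no critical points are born. The usual fix is to \emph{replace} the linear part rather than perturb it: inside the inner ball where $\beta \equiv 1$ one writes $f_t = f_{-1}(p) + x_1^3 - t\,\delta\,x_1 + Q$ (after rescaling $x_1$), and arranges the transition region using the cutoff so that no spurious critical points appear there. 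This is purely cosmetic and does not affect the rest of your argument; the genericity check and the flowline-persistence step (including the point that $p$ can be chosen off the finitely many rigid trajectories between old critical points) are sound.
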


\begin{prop}
  \label{prop:swap}
  Suppose that $c_s$ and $c_{s-1}$ are two neighboring critical points of
  $f_{-1}$ of the same index $k$. Then one can find a generic path $\{ f_t \}$
  which contains a single event, namely a Maxwell event; its swapping points are
  $c_s$ and $c_{s-1}$. Moreover, the matrix of Morse differential $\d_k$ (resp.
  $\d_{k+1}$) for $f_1$ equals to that for $f_{-1}$ multiplied on the right
  (resp. left) by a transposition $(s, s-1)$.
\end{prop}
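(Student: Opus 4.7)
The plan is to build the path by a localized bump perturbation of $f_{-1}$ near $c_{s-1}$, and then argue that the matrix of the Morse differential changes only through the induced permutation of basis elements. The construction of the path itself is essentially standard in Cerf theory (see \cite{Mi2}); the substantive part is the bookkeeping for the Morse complex and the verification that flowline counts are unchanged.

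First, I would produce the path explicitly. Pick a small open ball $U$ around $c_{s-1}$ disjoint from every other critical point, a smaller ball $U' \Subset U$, and a bump function $\rho \colon M \to [0,1]$ supported in $U$ with $\rho \equiv 1$ on $U'$. Pick a smooth monotone $h \colon [-1,1] \to \R$ with $h(-1)=0$ and $h(1) > f_{-1}(c_s) - f_{-1}(c_{s-1})$, and define $f_t := f_{-1} + h(t)\rho$. On $U'$ this equals $f_{-1} + h(t)$, so $c_{s-1}$ remains a non-degenerate critical point of index $k$ with critical value shifted by $h(t)$; outside $U$ the function is $f_{-1}$. Choosing $U, U', \rho$ so that $|\nabla f_{-1}|$ dominates $\max_{t}|h(t)|\cdot|\nabla \rho|$ on $\overline{U \setminus U'}$ rules out spurious critical points in the annulus for every $t$, hence each $f_t$ is Morse. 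By monotonicity of $h$, the function $f_t$ fails to be strong at exactly one moment $t_0 \in (-1,1)$ where $f_{t_0}(c_{s-1}) = f_{t_0}(c_s)$; since $h'(t_0) \neq 0$, the two branches in the Cerf diagram cross transversally, giving a single Maxwell event.

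Second, I would check that flowline counts are preserved. The metric $g$ is fixed and the critical set of $f_t$ as a subset of $M$ never moves; only the value at $c_{s-1}$ drifts. Standard parametric bifurcation analysis says the count of antigradient flowlines $c_i \to c_j$ can change only at a moment where a broken trajectory $c_i \to c_\star \to c_j$ appears. The only non-generic moment on our path is $t_0$, and the only conceivable issue there is a handle slide through $c_s$ or $c_{s-1}$, which would require an antigradient trajectory between $c_s$ and $c_{s-1}$. This is impossible: both have the same critical value at $t_0$, and $f_{t_0}$ strictly decreases along any nonconstant antigradient trajectory. Hence the signed counts of flowlines between critical points agree for $f_{-1}$ and $f_1$; orientations of critical points are transported continuously along the path without flipping, so signs match as well.

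Finally, the matrix statement is bookkeeping. The only difference between the two Morse complexes is that the positions of $c_{s-1}$ and $c_s$ in the ordered basis of $C_k$ are swapped. Since $\d_k$ has source $C_k$, permuting its source basis multiplies its matrix on the right by the corresponding permutation matrix; since $\d_{k+1}$ has target $C_k$, permuting its target basis multiplies its matrix on the left. Both permutations are the transposition $(s,s-1)$, as claimed. The main obstacle is the analytical step one: ensuring the bump perturbation keeps us inside the space of Morse functions along the whole path, with no secondary critical points born in the collar $U\setminus U'$. This is the content of the handle-reordering lemma in \cite{Mi2}; the subsequent bifurcation argument and matrix bookkeeping are then routine.
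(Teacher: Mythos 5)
Your first paragraph (the construction of the path and the single transversal crossing in the Cerf diagram) and your last paragraph (the left/right bookkeeping of the transposition) are fine. The gap is in the middle step, where you claim the flowline counts are preserved. You assert that ``the only non-generic moment on our path is $t_0$,'' but this conflates two different genericity conditions. Cerf-genericity of the path $\{f_t\}$ (Morse and strong except at isolated times) is what gives you a single Maxwell event; it says nothing about the Morse--Smale condition for the \emph{pair} $(f_t,g)$ with $g$ fixed. Your perturbation changes the gradient vector field on the annulus $U\setminus U'$ by the non-small amount $h(t)\nabla\rho$, so the unstable manifold of $c_{s-1}$ exits $U$ along a sphere that moves substantially as $t$ varies. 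At times $t\neq t_0$ this moving sphere can momentarily meet the stable manifold of \emph{another} index-$k$ critical point lying below $c_{s-1}$ (or, dually, $W^s(c_{s-1})$ can meet $W^u$ of an index-$k$ point above). Such a handle slide changes the signed count of index-difference-one trajectories by adding a multiple of another row or column of $\d_k$ or $\d_{k+1}$, which would break the conclusion. Your argument at $t_0$ itself (equal critical values exclude a trajectory between $c_s$ and $c_{s-1}$) is correct, but it addresses only one of the possible bad moments. A secondary, fixable issue: the domination condition $|\nabla f_{-1}|>\max_t|h(t)|\cdot|\nabla\rho|$ on the annulus cannot in general be arranged when the required shift $f_{-1}(c_s)-f_{-1}(c_{s-1})$ is large relative to the size of a ball avoiding all other critical points.

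The paper does not prove this proposition; it is quoted from \cite{Mi2}, and Milnor's rearrangement argument is built precisely to make the step you are missing trivial. One first observes that, for a generic metric, there are \emph{no} trajectories between $c_s$ and $c_{s-1}$ (they have equal index, so a transverse intersection $W^u(c_s)\cap W^s(c_{s-1})$ would be $0$-dimensional, yet any nonempty intersection contains a $1$-dimensional trajectory). One then modifies $f$ only through a function that is constant along trajectories of the fixed gradient(-like) field, equal to different constants near the stable/unstable sets of $c_s$ and of $c_{s-1}$; the new function admits the \emph{same} vector field as a gradient-like field, so every trajectory, and hence every signed count, is literally unchanged, and only the ordering of the basis of $C_k$ flips. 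If you want to keep your bump-function approach, you must either shrink the perturbation and iterate while explicitly excluding handle slides, or adapt the support of the perturbation to the flow as Milnor does.
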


The next operation is called handle sliding. It alters the metric, but doesn't
change the function.

\begin{prop}
  \label{prop:handle_slide}
  Suppose that $c_s$ and $c_{s-1}$ are two neighboring critical points of
  $f_{-1}$ of the same index $k$, $k \in \{ 2, \dots, \dim M - 2 \}$. Suppose
  also that points $c_s$ and $c_{s-1}$ lie in the same connected component of
  $f_{-1}^{-1}([f_{-1}(c_{s-1}) - \eps, f_{-1}(c_s) + \eps])$. Then one can find
  a new metrc $g'_\pm$ s.t. new matrix of Morse differential $\d_k'$ (resp.
  $\d_{k+1}'$) equals to the old one $\d_k$ (resp. $\d_{k+1}$) multiplied on the
  right (resp. left) by the base change mapping $c_s$ to $c_s \pm c_{s-1}$.
\end{prop}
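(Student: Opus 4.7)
The plan is to prove this via the classical handle-sliding technique of Smale, as presented in Milnor \cite{Mi2}. The statement is local in nature: since the other critical points of $f_{-1}$ play no role, one may restrict attention to the cobordism $W\colonequals f_{-1}^{-1}([f_{-1}(c_{s-1})-\eps,\,f_{-1}(c_s)+\eps])$ and, more precisely, to the single connected component containing $c_s$ and $c_{s-1}$ guaranteed by the hypothesis. On this component the antigradient flow of $(f_{-1},g)$ gives an ascending sphere $S^u_{s-1}$ of $c_{s-1}$ of dimension $\dim M-k-1$ and a descending sphere $S^d_s$ of $c_s$ of dimension $k-1$, both sitting inside the middle level set $N\colonequals f_{-1}^{-1}(r)$ (for some intermediate regular value $r$), which is itself a closed manifold of dimension $\dim M-1\geq 3$.

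The main geometric step is the following. Since $N$ is connected and $\dim S^u_{s-1}+\dim S^d_s=\dim N-1$, one may pick an embedded arc $\gamma\subset N$ from a point of $S^d_s$ to a point of $S^u_{s-1}$ meeting each sphere transversally at a single endpoint and otherwise disjoint from both. (Here the bound $2\leq k\leq \dim M-2$ is used so that both spheres have positive dimension and generic position arguments apply.) A tubular neighborhood of $\gamma$ together with a small disk around the endpoint in $S^d_s$ provides a ``finger'' along which one isotopes $S^d_s$ across $S^u_{s-1}$. Performing this as an ambient isotopy of $N$ supported in that neighborhood, and extending it to an isotopy of gradient-like vector fields on $W$ fixing the endpoints $c_s,c_{s-1}$, one produces a new gradient-like vector field, and hence (by the standard fact that the space of metrics realizing a given gradient-like vector field is non-empty and convex once the vector field is fixed) a new Riemannian metric $g'_\pm$ on $M$. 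The sign $\pm$ records the choice of orientation of the finger, that is, of the direction in which one pushes $S^d_s$ past $S^u_{s-1}$.

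Counting antigradient trajectories with signs, one sees that the new descending manifold of $c_s$ consists of the old one together with a thin tube that first runs to a neighborhood of $c_{s-1}$ along the finger and then descends via the old descending manifold of $c_{s-1}$; all other descending manifolds, and in particular those starting at critical points of index $k$ different from $c_s$, are unaffected. This shows that the column of $\d_k'$ corresponding to $c_s$ equals the column of $\d_k$ corresponding to $c_s$ plus $\pm$ the column corresponding to $c_{s-1}$, while the remaining columns are unchanged. This is precisely multiplication of $\d_k$ on the right by the elementary matrix $I\pm E_{s-1,s}$, i.e.\ by the base change sending $c_s$ to $c_s\pm c_{s-1}$. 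The corresponding statement for $\d_{k+1}'$ follows by the same argument applied to the ascending spheres (equivalently, by considering $-f_{-1}$), which alters the $s$\textsuperscript{th} and $(s-1)$\textsuperscript{th} rows of $\d_{k+1}$ in the dual fashion, i.e.\ multiplies $\d_{k+1}$ on the left by the same base change.

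The main obstacle in carrying this out rigorously is the sign bookkeeping in the last step: one must verify that the orientation of the new flowline produced by the finger is indeed the sign coming from the chosen direction of the isotopy, and that no spurious cancellations or contributions arise in the columns/rows indexed by the remaining critical points of indices $k$ and $k+1$. Both amount to standard transversality and orientation computations in the spirit of \cite{Mi2}, relying on the localization provided by the connectedness hypothesis in the statement.
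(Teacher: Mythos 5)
The paper does not prove this proposition at all: it is one of the three statements in \Cref{subs:realiz} quoted as classical tools of Smale, restated by Milnor in \cite{Mi2} (the basis theorem / handle sliding), and your sketch is a correct rendition of exactly that argument — finger move of the descending sphere of $c_s$ across the ascending sphere of $c_{s-1}$ in an intermediate level set, realized by altering the gradient-like vector field in a collar and then choosing a compatible metric. The only points to tighten are the standard ones you already flag: connectedness of the relevant component of the intermediate level (which follows from the codimension-$\geq 2$ condition $2\leq k\leq \dim M-2$), choosing the finger neighborhood to miss all other ascending and descending spheres, and the sign/inverse bookkeeping forced by $\d_k'\d_{k+1}'=0$, all of which are absorbed by the $\pm$ in the statement.
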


\begin{proof}[Proof of \Cref{prop:realiz}]
  Take any oriented strong Morse function $f_0$ on $M$ and any Riemannian metric
  $g_0$. We now begin changing $f_0$ (in the small neighborhood of its regular
  point) and $g_0$. After any birth event we orient two newborn points s.t. the
  coefficient of one in the differential of another would be $+1$ (there are two
  such choices). First of all, fix any $k \in \{ 2, \dots, \dim M - 2 \}$ and
  any $l \in \Z \setminus {0}$ s.t. the class of $l$ in $\F$ is $\lambda$.

  \begin{enumerate}[label=\arabic*.]
  \item Apply \Cref{prop:make_birth} to introduce two critical points $a$ and $b$ of
  indices $k$ and $k-1$ respecitively; call the result $f_1$.

  \item Apply it once again to introduce two new points $c$ and $d$ again of indices
  $k$ and $k-1$, s.t. $a$ and $c$ satisfy hypothesis of
  \Cref{prop:handle_slide}. The submatrix of Morse differential which takes into
  account only mentioned four points now looks like $
  \begin{pmatrix}
    p & 1 \\
    1 & q
  \end{pmatrix}
  $ for some integers $p$ and $q$. Although we don't need it, it follows from
  the \mbox{equality
    $\H_*(f_1^{-1}((-\infty, f(a) + \eps]), (-\infty, f(b) - \eps]); \Z) = 0$}
  that these numbers must satisfy $pq - 1 = \pm 1$.

  \item Apply \Cref{prop:handle_slide} to $a$ and $c$ sufficiently many times so as to
  obtain the submatrix of differential which looks like $
  \begin{pmatrix}
    p' & 1 \\
    1 & l
  \end{pmatrix}
  $ for some $p' \in \Z$. Call the new metric $g_1$.

  \item  Apply \Cref{prop:swap} to $a$ and $c$ to finally get the submatrix $
  \begin{pmatrix}
    1 & p' \\
    l & 1
  \end{pmatrix}
  $. Call the result $f_2$. It then follows straightforwardly that $a$ and $d$
  form a Barannikov pair (i.e. the bifurcation was non-trivial) with Bruhat
  number $\lambda$. Note that points $c$ and $b$ also in pair and the
  corresponding Bruhat number is $-\sfrac{1}{\lambda}$.
  \end{enumerate}
\end{proof}

\begin{rem}
  \label{rem:s4}
  Note that after performing the set of operations from the proof of
  \Cref{prop:realiz} the alternating product of Bruhat numbers doesn't change
  (up to sign; see \Cref{subs:tors}). It is not surprising since indeed it
  sometimes doesn't depend on the function at all. However, we will now sketch
  the construction of a strong Morse function on $\mathbb C\P^2$ s.t. it has only one
  Barannikov pair and the corresponding Bruhat number is $\lambda$ for any
  $\lambda$ from either $\Q$ or $\F_p$.

  Consider first a standart strong Morse function on $\mathbb C\P^2$ which has 3
  critical points $a$, $b$ and $c$ (of indices 0, 2 and 4 respectively). Apply
  \Cref{prop:make_birth} to introduce a pair of points $d$ and $e$ (of indices 2
  and 1) between (in the sense of natural linear order) $a$ and $b$. Apply
  \Cref{prop:handle_slide} enough many times s.t. $e$ would appear in the
  differential of $b$ with coeffient $l$, where $[l] = \lambda$. Finally, apply
  \Cref{prop:swap} to points $b$ and $d$ (the bifurcation will neccesarily be
  non-trival). The resulting function will have points $b$ and $e$ paired with
  Bruhat number $\lambda$.
\end{rem}

\subs{B-data in families}
\label{subs:bd_families}
In this subsection we start describing how B-data behaves along the
generic path of functions. In \Cref{subs:self_int} we finish this description.

First of all, we will orient all the functions in the path in the following way.
Pick up a generic point on some arc of the Cerf diagram. It corresponds to a
critical point of some $f_t$; orient it. Extend this orientation by continuity
to all the critical points lying the same arc (excluding the cusps). Apply this
procedure to all the arcs. This recipe allows us to orient all the points in the
path $\{ f_t \}$ by making only finite number of binary choices, namely $2^l$
where $l$ is the number of arcs. We use the term ``orientation of an arc'' for
short.

Recall that we have to fix a field $\F$ in order to define B-data.
Next, if the path $\{ f_t \}$ consists of only strong Morse functions, then this
data stays the same for all the time, see \Cref{rem:diff_s_t} and
\Cref{subs:gens_1p}.
 
We use the term ``bifurcations'' for the description of the way B-data changes
after two events from \Cref{subs:gens_1p}. Disregarding the Bruhat numbers, this
description was presented already in \cite{Bar} (see \cite{Lau} for a different
proof). See also the paper \cite{CSEM} and pictures in the survey \cite{EH}.
Thus our job is to determine how Bruhat numbers change along the way (see
\Cref{rem:or_bar}). In the case of birth/death event we restrict ourselves to
birth one for brevity (death one is obtained from birth one by reversing the
time).

Before turning to formal statements, let us describe the basic general
properties of bifurcations. Recall that in the path $\{ f_t \}$ all functions
are strong Morse except for $f_0$. Consider the following subset of critical
points of $f_{1}$: 
\begin{enumerate*}[label=\arabic*)]
\item two points involved in event (i.e. either two newborn points or a couple
  with swapping critical values, see \Cref{rem:crit_bij}),
\item the points paired with those, if any.
\end{enumerate*}
We will refer to points from this subset as ones ``involved in the
bifurcation''. The reason behind this name is that all the other points retain
their original pairs (in a sense of bijection from \Cref{rem:crit_bij}).
Moreover, the Bruhat numbers on these pairs remain the same. Thanks to all these
facts, we are able to use pictorial format for describing the bifurcations.
Namely, we will only depict points which are involved in the bifurcation (there
are at most four of them).

\begin{prop}
  \label{prop:birth_death}
  After the birth event a Barannikov pair of two newborn critical points
  appears; its Bruhat number is $\pm 1$. All the other pairs and Bruhat numbers
  remain unaltered. See \Cref{fig:birth_death}.
\end{prop}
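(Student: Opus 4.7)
The plan is to combine the local model of the birth event with the four-term characterization of Barannikov pairs (\Cref{rem:4term}), the short-pair formula (\Cref{cor:short_matr_comp}), and the classification theorem (\Cref{thm:classification}). I would work with the enhanced complex $(\tilde\C,\tilde\kappa)$ associated with $f_1$ via \Cref{con:enh_comp_morse}, compare it with $(\C,\kappa)$ associated with $f_{-1}$, and use the natural bijection of all non-involved critical points from \Cref{rem:crit_bij}.

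First I would establish that the two newborn points $p$ (index $k$) and $q$ (index $k+1$) form a short Barannikov pair with Bruhat number $\pm 1$. In the local model $f_t = x_1^3 + tx_1 + Q$ these points are adjacent in the order of critical values, and the handles they contribute form a cancelling pair, so $M^{f_1(q)+\eps}$ is simple-homotopy equivalent to $M^{f_1(p)-\eps}$ through the family, and hence the relative homology of this pair over $\F$ vanishes. Let $t<s=t+1$ denote the filtration indices of $p$ and $q$. The four-term condition of \Cref{rem:4term} then reads: $\dim\H_\bullet(\tilde\C^{s-1},\tilde\C^t)=0$ trivially, $\dim\H_\bullet(\tilde\C^s,\tilde\C^{t-1})=0$ from the vanishing just mentioned, and $\dim\H_\bullet(\tilde\C^{s-1},\tilde\C^{t-1})=\dim\H_\bullet(\tilde\C^s,\tilde\C^t)=1$ from the single attached cell. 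Hence $(t,s)$ is a Barannikov pair, and being adjacent it is automatically short. By \Cref{cor:short_matr_comp} its Bruhat number equals the coefficient of $p$ in $\partial q$ in any Morse basis, and the acyclic two-step $\Z$-subquotient spanned by $p$ and $q$ forces this coefficient to be $\pm 1$.

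For the second half I would invoke \Cref{thm:classification} and exhibit an isomorphism of enhanced complexes
\[
  (\tilde\C,\tilde\kappa)\;\simeq\;(\C,\kappa)\oplus(\mathcal{D},\mu),
\]
where $(\mathcal{D},\mu)$ is the two-cell acyclic complex $0\to \F\cdot q\xto{\epsilon}\F\cdot p\to 0$ with its tautological enhancement ($\epsilon=\pm 1$), inserted into the filtration between positions $m$ and $m+1$. Writing $\partial q=\epsilon p+\eta$ with $\eta\in C^m$, the identity $\partial^2=0$ forces $\partial p=-\epsilon\,\partial\eta$, so the unitriangular substitution $p\mapsto p':=p+\epsilon\eta$ turns $p'$ into a cycle and yields $\partial q=\epsilon p'$. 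Similar unitriangular substitutions $c_i\mapsto c_i-\epsilon\mu_i q$ on every old $(k+1)$-cell of filtration index above $s$ (with $\mu_i$ the coefficient of $p$ in $\partial c_i$) remove $p'$ from their boundaries and decouple the $\{p',q\}$-block from the rest. The new B-data is then visibly that of $(\C,\kappa)$ augmented by the single short pair $(p,q)$ with Bruhat number $\epsilon$.

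The main obstacle is to verify that after these substitutions no $q$ remains in the boundary of any $(k+2)$-cell of filtration index above $s$. This is forced by $\partial^2=0$: if such $c$ has $\partial c=\gamma q+\xi$, then after updating $\xi$ via the previous substitutions its boundary no longer contains $p'$, so $\partial^2 c=\gamma\epsilon p'=0$ gives $\gamma=0$. Alternatively one may bypass the direct-sum construction and extract the B-data of $(\tilde\C,\tilde\kappa)$ directly via \Cref{con:enhancement_on_h}, checking level by level that the connecting homomorphism $\delta([\tilde\kappa_{s_0}])$ for an old index $s_0>s$ coincides with the old $\delta([\kappa_{s_0-2}])$ under the natural quasi-isomorphism of sublevel-set complexes.
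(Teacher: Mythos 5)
Your first half is correct and takes a more algebraic route than the paper: the paper simply quotes the description of the birth event from \Cref{subs:gens_1p} (two adjacent newborn cells, one appearing in the differential of the other with coefficient $\pm 1$) and, for the pairing itself, defers to the known bifurcation analysis, whereas you re-derive the pairing from \Cref{rem:4term} and the value of the Bruhat number from \Cref{cor:short_matr_comp}. That works, modulo the cosmetic point that \Cref{cor:short_matr_comp} is stated for $\F=\Q$; for a general field you should instead apply \Cref{prop:bruhat_cell} to the uncovered entry of the rook matrix, which gives the same conclusion.

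The gap is in the second half. Your substitutions $p\mapsto p'$ and $c_i\mapsto c_i-\epsilon\mu_i q$, together with the $\d^2=0$ argument for the $(k+2)$-cells, do split $(\tilde\C,\tilde\kappa)$ as a direct sum of the acyclic two-cell complex $(\mathcal D,\mu)$ and a complementary enhanced complex $(\mathcal E,\nu)$ spanned by the old cells. But the proposition compares $f_1$ with $f_{-1}$, and you never show that $(\mathcal E,\nu)$ is isomorphic to the enhanced complex $(\C,\kappa)$ of $f_{-1}$ --- you only assert it (``visibly''). That identification is the actual content of ``all other pairs and Bruhat numbers remain unaltered'': for $f_1$ the old cells above the birth level are attached to sublevel sets that now contain two extra cells, so their cellular differentials agree with those for $f_{-1}$ only after composing with the collapse of the cancelling pair (which is what your substitutions implement algebraically), and one must still match the result against what $f_{-1}$ itself produces. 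The paper closes exactly this point by working with \Cref{con:bd_morse} on singular chains: the regular sublevel sets of $f_{-1}$ are set-theoretically sublevel sets of $f_1$, the $f_1$-filtration merely inserts two extra steps, and the inclusion $M^{r_{s-1}}\subset M^{r_{s+2}}$ is a homotopy equivalence, so the enhancements on $\H_\bullet(M^{r_i})$ and all connecting homomorphisms at old indices literally coincide. Your closing remark about ``checking level by level \dots under the natural quasi-isomorphism of sublevel-set complexes'' is precisely this missing comparison; it should be the main argument rather than an aside.
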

\begin{proof}
  Recall that by the preceeding discussion in the present subsection it suffices
  to track down only the Bruhat numbers. The first statement follows from the
  description given in \Cref{subs:gens_1p}.

  Note that $\# \Cr(f_1) = \# \Cr(f_{-1}) + 2$; denote the non-Morse critical
  point of $f_0$ by $p$. Recall that outside of a small neighborhood of $p$ the
  function $f_t$ doesn't change along the path. Thus all critical points of
  $f_{-1}$ are also critical for $f_1$. Let $c_{s+1}$ and $c_s$ be two newborn
  critical points of $f_1$. Let $r_0 < \ldots < r_{\# \Cr(f_1)}$ be regular
  values of $f_1$. The sequence
  $r_0 < \ldots < r_{s-1} < r_{s+2} < r_{\# \Cr(f_1)}$ is the set of regular
  values of $f_{-1}$, so any sublevel set of $f_{-1}$ is also a sublevel set for
  $f_1$. One then arrives at the following diagram.
  \[
  \begin{tikzcd} %
    M^{r_0}     \arrow[r, symbol=\subset] \arrow[d, equal] &
    \ldots      \arrow[r, symbol=\subset]                  &
    M^{r_{s-1}} \arrow[drr, "\congrot"]   \arrow[d, equal] &
    \phantom{M^{r_s}}                     \arrow[r, symbol=\subset]&
    \phantom{M^{r_{s+1}}}                                  &
    M^{r_{s+2}} \arrow[r, symbol=\subset] \arrow[d, equal] &
    \ldots \arrow[r, symbol=\subset]                       &
    M^{r_{\# \Cr(f_1)}} \arrow[d, equal] \\
    M^{r_0}     \arrow[r, symbol=\subset] &
    \ldots      \arrow[r, symbol=\subset] &
    M^{r_{s-1}} \arrow[r, symbol=\subset] &
    M^{r_s}     \arrow[r, symbol=\subset] &
    M^{r_{s+1}} \arrow[r, symbol=\subset] &
    M^{r_{s+2}} \arrow[r, symbol=\subset] &
    \ldots      \arrow[r, symbol=\subset] &
    M^{r_{\# \Cr(f_1)}} \\
  \end{tikzcd}
\]
Here equality sign denotes set-theoretical equality of two subspaces of $M$ and
$\cong$ denotes a homeomorphism. The latter takes place since attaching two
cells as in \Cref{subs:gens_1p} doesn't change the homeomorphism type of a
space. It now follows that \Cref{con:bd_morse} produces the same B-data
for $f_{-1}$ and $f_1$ except for the above-mentioned newborn pair.
\end{proof}

\begin{rem}
  The sign in $\pm 1$ depends on the chosen orientations of arcs (see beginning
  of this subsection). See \Cref{subs:akh} for a theorem where it plays
  important role.
\end{rem}  
 
\begin{figure}[ht]
  \includegraphics[width=6cm]{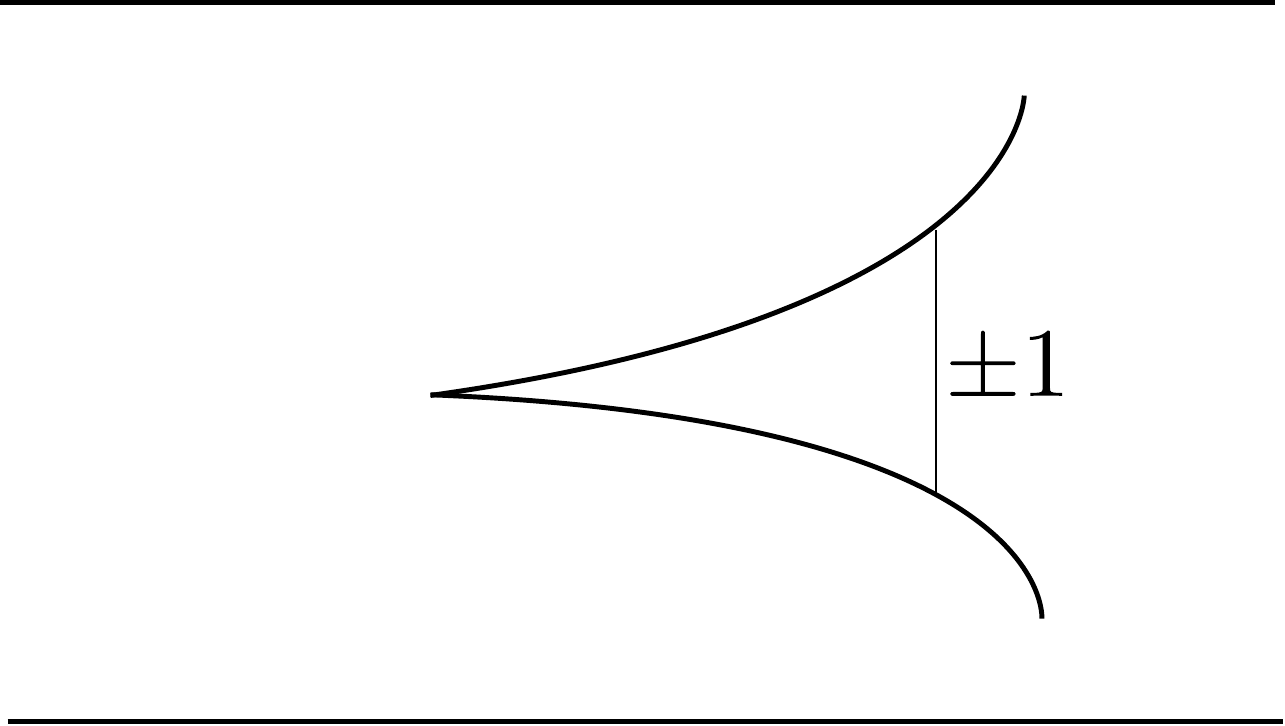}
  \caption{Birth of two critical points.}
  \label{fig:birth_death} 
\end{figure} 

\subs{Maxwell event}
\label{subs:self_int}
In this subsection we consider the second type of event, namely
self-intersection of a Cerf diagram (in other words, Maxwell event). This
finishes the description of bifurcations of B-data in families started in
\Cref{subs:bd_families}.

Let us fix the notations first. Let $c_{s+1}$ and $c_s$ be two critical points
of $f_{-1}$ participating in the bifurcation. Recall from \Cref{subs:gens_1p}
that $\Cr(f_1)$ coincides, as an ordered subset of $M$, with $\Cr(f_{-1})$ with
the order of $c_{s+1}$ and $c_s$ reversed. Let
$r_0 < \ldots < r_{\# \Cr(f_{-1})}$ be regular values of $f_{-1}$ and $f_1$.

\begin{prop}
  \label{prop:self_int}
  After the Maxwell event two types of bifurcations possible.
  \begin{enumerate}[label=\arabic*)]
  \item Trivial bifurcation. After it points $c_{s+1}$ and $c_s$ keep their
    initial pairs (if any) and Bruhat numbers on them. All combinatorial
    variants of juxtaposition of points are possible. The values $\deg c_{s+1}$
    and $\deg c_s$ may be any.
  \item Non-trivial bifurcation. The neccecesary condition is
    $\deg c_{s+1} = \deg c_s$. The list of five possible variants is given in
    \Cref{fig:non_triv_bif}.
  \end{enumerate}
  All the points not participating in the bifurcation keep their initial pairs
  (if any) and Bruhat numbers on them.
\end{prop}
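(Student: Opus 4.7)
The plan is to reduce the proposition to a purely algebraic bifurcation of the enhanced complex from \Cref{subs:enh_comp_morse}. The key observation is that during a Maxwell event the critical points do not move and the function is unchanged outside small neighbourhoods of $c_s$ and $c_{s+1}$, so the sublevel sets satisfy $M^{r_i} = \tilde M^{r_i}$ for all $i \ne s$ (tildes denoting the $f_1$ side). Hence the underlying chain complex $\C$ is the same for $f_{-1}$ and $f_1$, and only the filtration at level $s$ changes, with $\C^s = \C^{s-1}\oplus\langle c_s\rangle$ replaced by $\tilde\C^s = \C^{s-1}\oplus\langle c_{s+1}\rangle$. The problem thus becomes: how does the B-data of an enhanced complex change under such a very localised surgery on a single filtration level?

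I would first show that any Barannikov pair $(p,q)$ of $f_{-1}$ with $p\notin\{s,s+1\}$ and $q\notin\{s-1,s\}$ survives into $f_1$ with the same Bruhat number. The pairing is preserved by \Cref{rem:4term}, since each of the four relative homology dimensions appearing there is computed from sublevel sets $M^{r_i}$ with $i\ne s$. A short diagram chase through the exact sequences of the pairs $(\C^i,\C^{i-1})$ at $i = s-1,s,s+1$, in the spirit of \Cref{con:enhancement_on_h}, shows that the enhancement on $\H_\bullet(\C^{p-1})$ produced from either filtration is the same, so the Bruhat number $\lambda(p) = \cf(\delta([\kappa_p]))$ is also unchanged. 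This already handles every critical point not involved in the bifurcation, and the last sentence of the proposition.

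If $\deg c_s \ne \deg c_{s+1}$, then $c_s$ and $c_{s+1}$ lie in different summands $C_k$ of $\C_\bullet$, so the swap does not interact with any differential: the rook matrices supplied by \Cref{thm:classification} for $f_{-1}$ remain rook matrices for $f_1$ up to the relabelling $s\leftrightarrow s+1$, explaining why non-triviality forces equal degrees. Now suppose $\deg c_s = \deg c_{s+1} = k$ and fix a Barannikov basis of $(\C,\kappa)$; since $c_s$ and $c_{s+1}$ are adjacent in the ordering of $C_k$ inherited from $\C$, the swap amounts to exchanging two adjacent columns of the rook matrix $R_k$ and two adjacent rows of $R_{k+1}$. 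Equivalently we want a Barannikov basis adapted to the new flag on $(C_k,\kappa_k)$, which differs from the old flag at only one position. Enumerating the nine joint statuses of $(c_s,c_{s+1})\in\{U,L,H\}^2$, in four of them the obvious modification already yields a rook matrix in the correct positions and the bifurcation is trivial; in the remaining five a single unitriangular adjustment (allowed by the new order) is required to restore rook form, and it produces exactly the configurations drawn in \Cref{fig:non_triv_bif}, with the new Bruhat numbers obtained by reading off the transformed rook matrix entries.

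The main obstacle is this last enumeration: one must carefully verify, configuration by configuration, how each affected pair reorganises and how its Bruhat number transforms under the cleanup unitriangular change of basis. This is where factors of the form $-\lambda^{-1}$ reminiscent of \Cref{rem:s4} arise; the bookkeeping is elementary but has to be carried out by hand in order to match the five advertised pictures.
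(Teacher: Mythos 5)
Your reduction to a purely algebraic statement (same underlying complex, filtration changed only at level $s$), your treatment of the uninvolved points via \Cref{rem:4term} and the exact sequences, and the observation that $\deg c_s\ne\deg c_{s+1}$ forces triviality all match the paper's proof, which likewise reduces everything to the diagram of sublevel sets coinciding except at $M^{r_s}$. Where you genuinely diverge is the heart of the matter: how the Bruhat numbers transform in the non-trivial cases. You propose to recompute the rook normal form by hand after the swap --- reorder the Barannikov basis, notice it may fail to respect the new flag, and perform the cleanup unitriangular change of basis. The paper instead takes the combinatorial description of the five non-trivial cases as known from Barannikov and pins down the new Bruhat numbers by a single global constraint: the torsion $\tau$ of \Cref{subs:tors} depends only on the equivalence classes of the chain basis and of the homology basis; the homology enhancement is unchanged because neither involved point is homological, while the chain basis changes by a transposition, so $\tau(f_1)=-\tau(f_{-1})$ by \Cref{rem:tors_basis}, and this sign flip determines the new number on the single affected pair in each of the relevant cases. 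Your route is more self-contained (it would reprove the pairing bifurcations rather than cite them) at the cost of a five-case matrix computation; the paper's route buys that computation for the price of one determinant.

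One point in your sketch is actually wrong as stated and would need repair before the enumeration is carried out: whether the bifurcation is trivial is \emph{not} determined by the joint status of $(c_s,c_{s+1})$ in $\{U,L,H\}^2$, so the proposed split ``four statuses trivial, five statuses requiring adjustment'' cannot be the right organization --- indeed the proposition itself asserts that \emph{all} combinatorial juxtapositions occur for the trivial bifurcation. Triviality is governed by whether a Barannikov basis can be chosen compatibly with both flags simultaneously, i.e.\ by the vanishing of the coefficient of $c_s$ in the Barannikov basis vector sitting at position $s+1$ (equivalently, of a certain covered entry of the differential), and for each of the five pictured statuses both the trivial and the non-trivial outcome can occur. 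The correct enumeration is therefore by status \emph{and} by whether that coefficient vanishes; only then does the cleanup argument produce the five pictures of \Cref{fig:non_triv_bif}. Since this case analysis is exactly the content you defer, the proposal as written establishes the easy parts of \Cref{prop:self_int} but not yet the list of five variants or the new Bruhat numbers on them.
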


\begin{figure}[ht]
  \includegraphics[width=3.5cm]{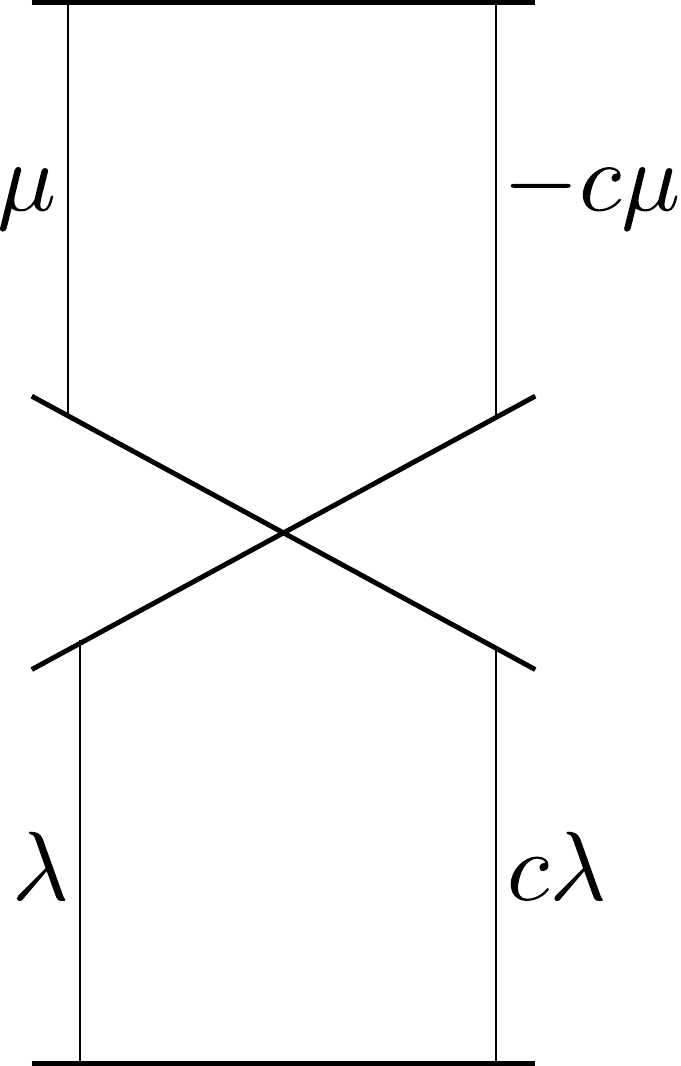}
  \includegraphics[width=3.3cm]{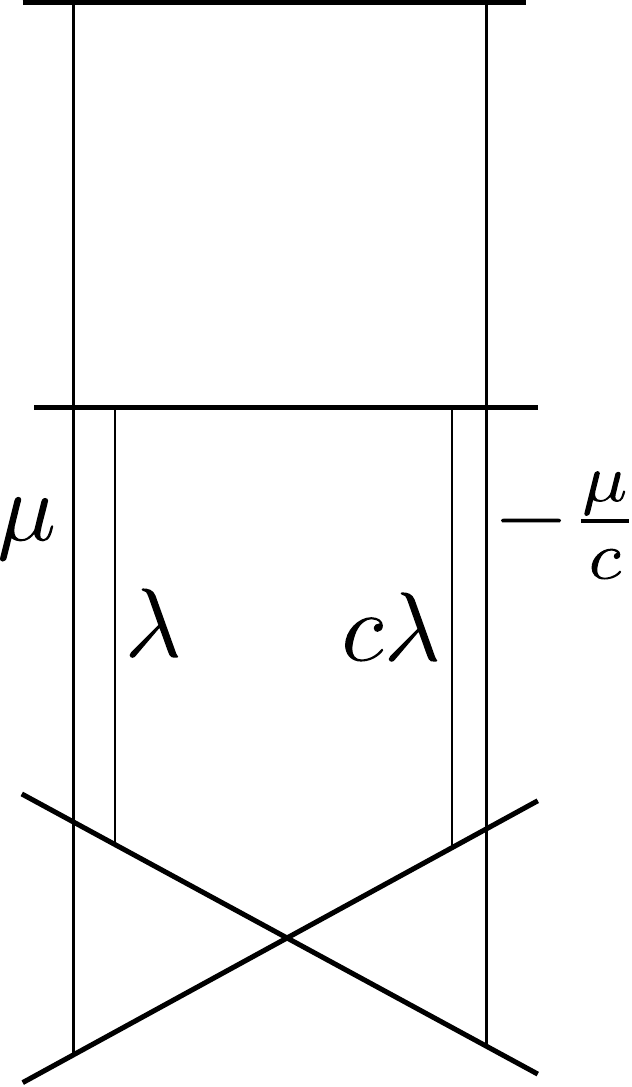}
  \includegraphics[width=3.2cm]{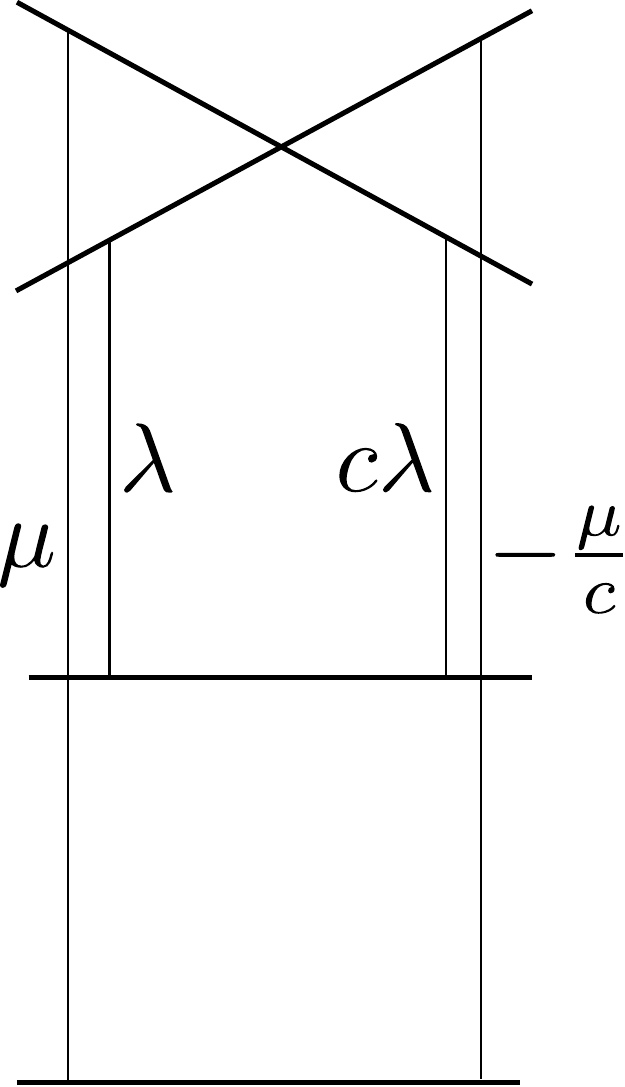}
  \includegraphics[width=3cm]{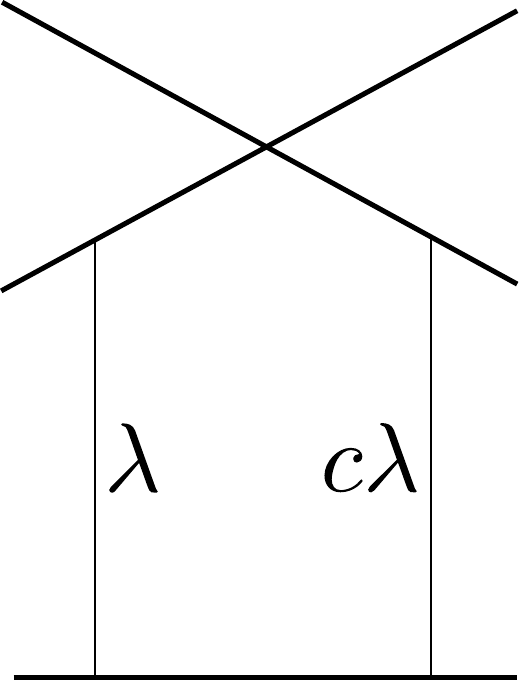} 
  \includegraphics[width=3.1cm]{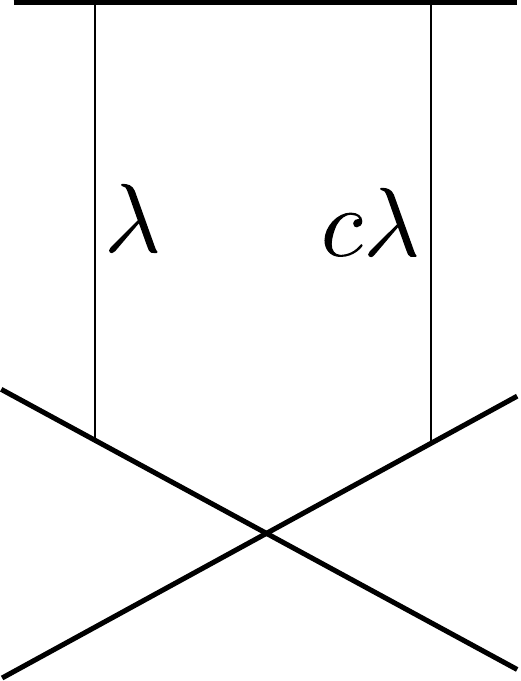}
  \caption{Non-trivial bifurations at the self-intersection of a Cerf diagram}
  \label{fig:non_triv_bif} 
\end{figure} 

\begin{rem}
  As seen on \Cref{fig:non_triv_bif} pairing and Bruhat numbers may well change
  after the non-trival bifurcation. Note that the mentioned list of five cases
  doesn't exhaust all possible combinatorial variants of juxtaposition of
  points.
\end{rem}

\begin{proof}[Proof of \Cref{prop:self_int}]
  As in \Cref{prop:birth_death} it suffices to track down only Bruhat numbers. 
  Let $r_0 < \ldots < r_{\# \Cr(f_1)}$ be regular values of $f_{-1}$. These
  values are also regular for $f_1$. Moreover, all but one sublevel sets of
  $f_{-1}$ and $f_1$ coincide. The only exception is the sublevel set for the
  regular value $r_s$; so, we write
  $M^{r_s} = \{x \in M | f_{-1}(x) \leq r_s \}$ and
  $\tilde{M^{r_s}} = \{x \in M | f_1(x) \leq r_s \}$. One arrives at the
  following diagram.
  \[
  \begin{tikzcd}
    &
    &
    &
    M^{r_s}     \arrow[dr, symbol=\subset] &
    &
    &
    \\
    M^{r_0}     \arrow[r,  symbol=\subset] &
    \ldots      \arrow[r,  symbol=\subset] &
    M^{r_{s-1}} \arrow[ur, symbol=\subset] \arrow[dr, symbol=\subset] &
                                           &
    M^{r_{s+1}} \arrow[r,  symbol=\subset] &
    \ldots      \arrow[r,  symbol=\subset] &
    M^{r_{\# \Cr(f_1)}}                   \\
    &
    &
    &
    \tilde{M^{r_s}} \arrow[ur, symbol=\subset] &
    &
    &
    \\
  \end{tikzcd}
\]
The case of trivial bifurcation and the very last statement of
\Cref{prop:self_int} now follow directly by unwrapping \Cref{con:bd_morse}. We
now need to show how Bruhat numbers change after cases 1-3 of non-trivial
bifurcation (see \Cref{fig:non_triv_bif}; two other don't place any restriction
on these numbers). By the last statement it suffices to prove that
$\tau(f_{-1}) = -\tau(f_1)$ (see \Cref{subs:tors}). Since the number $\tau$
admits torsion-theoretic interpretation, it depends only on two things: 
equivalence class basis $c$ of an enhanced complex and equivalence class of
basis $h$ of homology. The latter is uniquely determined by the enhancement on
$\H_\bullet(M)$ induced by a function. Since both points involved in the event
are not homological, this enhancement is the same for $f_{-1}$ and $f_1$.

We will now investigate how basis $c$ changes after the bifurcation. Choose any
CW-approximation (or any Riemannian metric) for $f_{-1}$ in the sense of
\Cref{subs:cw_comp_morse} (or, respectively, \Cref{subs:morse_comp}). The
resulting CW-complex (or, respectively, Morse complex) also serves as a
CW-complex associated with $f_1$ with the only difference that $s^\text{th}$ and
$(s+1)^\text{th}$ generators got swaped in the linear order. The determinant of
a transposition matrix is $-1$ and the formula $\tau(f_{-1}) = -\tau(f_1)$ now
follows from \Cref{rem:tors_basis}.
\end{proof}

\subs{Manifolds with almost no $\F$-homology}
\label{subs:no_hom}
In this subsection we prove the following theorem, see \Cref{subs:tors} for a
context.

\ThmNoHomRestate*   
                      
\begin{proof}
  Take two strong Morse functions on $M$ and connect them by a generic path. The
  plan is to use \Cref{prop:birth_death,prop:self_int} to prove that $\pm \tau$
  doesn't change after any bifurcation along the path. The case of birth/death
  event obviously follows from \Cref{prop:birth_death}. The rest is devoted to
  dealing with the Maxwell event.

  It follows from \Cref{prop:self_int} that if both points involved in Maxwell
  event are paired (i.e. not homological) then the number $\pm \tau$ stays the
  same after the bifurcation. On the other hand, by assumption any homological
  point is of degree either zero or $\dim M$. In the former case, the first part
  of \Cref{prop:pair_ext} implies that $\pm \tau$ again stays the same
  (regardless of whether the bifurcation is trivial or not). The rest is devoted
  to the latter case.

  If $M$ is non-orientable then $f$ may have a homological point of degree
  $\dim M$ only if $\ch \F = 2$, but this case makes the initial statement
  trivial (see \Cref{rem:q_fp}). If $M$ is orientable we make use of the second
  part of \Cref{prop:pair_ext} in the same way as earlier.
\end{proof}

\subs{A theorem of Akhmetev-Cencelj-Repovs} \label{subs:akh} In this subsection
we apply our methods to reprove the theorem of Akhmetev-Cencelj-Repovs
\cite{Akhm} in greater generality. Roughly it says that several numerical
invarinats of a generic path in the space of strong Morse functions satisfy a
certain equation mod 2.

First of all we need to pass to a bit more general setting. Cobordism is a
manifold $M$ with boundary $\dM_0 \sqcup \dM_1$. By a Morse function $f$ on a
cobordism $(M, \dM_0, \dM_1)$ we will mean a function $f \colon M \to [0, 1]$
with only non-degenerate critical points, all in the interior $M$, s.t.
$f^{-1}(0) = \dM_0$ and $f^{-1}(1) = \dM_1$. Strongness property is defined in
the same manner as in the closed case. All the classical results from
\Cref{subs:cw_comp_morse,subs:morse_comp} generalize readily to this setting,
see \cite{Mi2}. The only thing to mention here is that now construction from
\Cref{subs:cw_comp_morse} attaches cells one-by-one starting from $\dM_0$. As a
consequence, one obtains a CW-decomposition of $\sfrac{M}{\dM_0}$, not $M$
itself. Thus, all the various constructed complexes count relative homology
$\H_*(M, \dM_0)$. All the results about enhanced complexes also translate
readily. Trivial cobordism is a cylinder
$(M \times [0, 1], M \times \{ 0 \}, M \times \{ 1 \})$.

We will now introduce mentioned invariants of a generic path $\{ f_t \}$. The
first one is the number of self-intersections of the Cerf diagram (or, in our
terminology, the number of Maxwell events), call it $\X$. To get to the second
one recall that in \Cref{subs:bd_families} we described the procedure of
orienting the arcs of a Cerf diagram, which outputs an orientation of each
strong Morse function in a path. After orienting the arcs somehow one can assign
a sign to each cusp of a Cerf diagram (i.e. birth/death event) as follows. Let
$t_0$ be a point of birth (resp. death) event. Pick any value $t_1 > t_0$ (resp.
$t_1 < t_0$) s.t. all functions between $t_1$ and $t_0$ ($t_0$ excluded) are
strong Morse. Denote by $c_{s+1}$ and $c_s$ two newborn (resp. about to die)
critical points of $f_{t_1}$. It follows from classical results recalled in
\Cref{subs:cw_comp_morse} that cellular differential of $c_{s+1}$ contains $c_s$
with coefficient either 1 or $-1$ regardless of choices made (essentially,
cellular approximation). Using another language, one may say the same about the
Morse differential w.r.t. to any Riemannian metric (see \Cref{subs:morse_comp}).
The sign of a cusp is now defined as the sign of this number. Let $\nC$ be the
number of negative cusps. Changing orientation of an arc changes the sign of
each cusp which serves as this arc's endpoint (obviously, there are at most two
such cusps). Therefore, if both $f_{-1}$ and $f_1$ have no critical points, then
the parity of $\nC$ is a well-defined invariant of a path $\{ f_t \}$. We are
now turning to a corollary which is easier to state compared to the main
theorem. For example, it doesn't appeal to any field or Bruhat numbers
whatsover, thus remaining entirely in the realm of Cerf theory.

\CorAkhCorRestate*

\begin{rem}
  In \cite{Akhm} \Cref{cor:akh_cor} was proved using two different methods, both
  requiring additional assumptions on $M$. The first method is based on
  h-principle and requires $M$ to be stably parallelizable and simply-connected.
  The second one is based on parametric Morse theory and requires the dimension
  of $M$ to be at least five. This method is geometrical and relies on strong
  results of Smale akin to those in \Cref{subs:realiz}. These results, in turn,
  only valid if the dimension of $M$ is big enough and lead, for example, to the
  famous h-cobordism theorem. On the other hand, our approach of using enhanced
  complexes, after deducing all the necessary generalities in
  \Cref{subs:bd_families}, is entirely combinatorial. 
\end{rem}

We will now give one more definition in order to state the main theorem of this
subsection. Fix a field $\F$ s.t. $\ch \F \neq 2$ and consider the set of
Barannikov pairs of some oriented strong Morse function $f$. We say that two
pairs overlap if they overlap when viewed as segments on the real line.
Formally, two pairs $(s_1, s_2)$ and $(t_1, t_2)$ overlap if either
$s_1 < t_1 < s_2 < t_2$ or $t_1 < s_1 < t_2 < s_2$. Let $\O$ be the number of
all overlapping (unordered) pairs of Barannikov pairs (we stress out that we
place no restrictions on the degrees of points). Define now
\[
  \tau'(f, \F) \colonequals (-1)^{\O} \prod_{s \in U}\lambda(s)^{(-1)^{\deg s}}
  \in \F^*.
\]
\begin{rem}
  In words, this is the alternating product of all the Bruhat numbers times the
  sign depending on the parity of $\O$. This sign is different from the one from
  \Cref{def:tors}. As usual, we drop the ingredients of $\tau'$ when they are
  understood.
\end{rem}

Let now $(M, \dM_1, \dM_2)$ be any cobordism s.t. relative homology
$\H_*(M, \dM_1)$ vanishes (recall that we take coefficients to be in $\F$ by
default). (For example, one may take an h-cobordism.) Then \Cref{thm:no_hom} (in
the aforementioned setting) implies that $\pm \tau'(f)$ in independent of $f$.

We are now ready to state the main theorem of this subsection, which we do in
multiplicative notation. Recall that $\X$ is the number of self-intersections of
the Cerf diagram and $\nC$ is the number of its negative cusps.

\begin{thm}
  \label{thm:akh}
  Let $\F$ be a field and $(M, \dM_1, \dM_2)$ be a cobordism s.t.
  $\H_*(M, \dM_1) = 0$. Let also $\{ f_t \}$ be a (somehow oriented) generic
  path of functions on it. Then one has
  \[
    \frac{\tau'(f_1)}{\tau'(f_{-1})} (-1)^{\X} (-1)^{\nC} = 1.
  \]
\end{thm}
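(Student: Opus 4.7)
The plan is to track how $\tau'(f_t)$ evolves along the generic path $\{f_t\}$, isolating the contribution of each individual event (cusp or Maxwell self-intersection). I claim that each cusp multiplies $\tau'$ by its sign, while each Maxwell event multiplies $\tau'$ by $-1$. Multiplying these contributions over all events along the path then yields $\tau'(f_1)/\tau'(f_{-1}) = (-1)^{\nC+\X}$, which is the identity of the theorem.

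For a cusp (birth or death event), \Cref{prop:birth_death} shows that a Barannikov pair of two newborn critical points is created (or destroyed) at consecutive positions $s,s+1$, with Bruhat number $\pm 1$. Because this pair is manifestly short (nothing lies between $s$ and $s+1$), \Cref{cor:short_matr_comp} identifies its Bruhat number with the coefficient of one newborn cell in the cellular differential of the other, i.e.\ with the sign of the cusp. Since the new pair occupies consecutive positions, every other Barannikov pair is either nested around it or entirely disjoint from it; in particular no overlap is created or destroyed, and relabeling of indices preserves the overlap relations among pre-existing pairs. Hence $\O$ is unchanged modulo $2$, while the alternating product of Bruhat numbers picks up precisely the factor $\lambda = \pm 1$ (since $\lambda$ is its own $\pm 1$-th power, the degree parity of the upper point does not matter). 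Thus $\tau'$ is multiplied by the sign of the cusp, and the total contribution from all cusps is $(-1)^\nC$.

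For a Maxwell event, the hypothesis $\H_*(M,\dM_1)=0$ forces every critical point of every generic $f_t$ along the path to be paired (no homological critical points can appear), so both swapping points $c_s, c_{s+1}$ are paired and we are either in the trivial case of \Cref{prop:self_int} or in one of the five non-trivial cases of \Cref{fig:non_triv_bif}. In the trivial case the set of pairs and all Bruhat numbers are preserved, so the only way $\tau'$ can change is through the parity of $\O$. Writing $t_1$ (resp.\ $t_2$) for the partner of $c_s$ (resp.\ $c_{s+1}$), a short case analysis on the possible orderings of $\{t_1,t_2\}$ relative to $\{s,s+1\}$ shows that the overlap status of precisely the two affected pairs flips while no other pair-pair overlap is touched, since no third position moves and $s,s+1$ are consecutive. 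Hence $\O$ changes by $1$ mod $2$ and $\tau'$ flips sign. For each of the five non-trivial subcases I would carry out an analogous explicit check, computing from \Cref{fig:non_triv_bif} both the change in Bruhat numbers and the change in $\O$, and verifying that the combined effect is once again a sign flip of $\tau'$.

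The main obstacle is the detailed bookkeeping in the non-trivial Maxwell bifurcations, where the rearrangement of pairs and the change in Bruhat numbers happen simultaneously and must be balanced against the shift in $\O$. A potential shortcut is to write $\tau'(f) = \tau(f)\cdot(-1)^{\sigma(f)+\O(f)}$ with $\sigma$ the permutation of \Cref{con:perm}; then \Cref{thm:no_hom} (controlling $\tau$ up to sign) together with the sharper sign statements inside the proofs of \Cref{prop:birth_death} and \Cref{prop:self_int} reduces the whole task to a purely combinatorial check of how $\sigma+\O$ responds to an adjacent transposition and to the insertion or removal of an adjacent short pair.
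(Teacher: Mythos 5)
Your overall strategy is exactly the paper's: track the sign of $\tau'(f_t)$ event by event, showing that each cusp contributes its sign and each Maxwell event contributes $-1$. Your cusp analysis and your treatment of the \emph{trivial} Maxwell bifurcation match the paper's argument and are in places more careful than it is --- you justify identifying the newborn pair's Bruhat number with the sign of the cusp via shortness, and you actually carry out the case analysis showing that exactly one overlap is created or destroyed in the trivial case, both of which the paper only asserts.

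The genuine gap is the \emph{non-trivial} Maxwell bifurcation, which you explicitly defer (``I would carry out an analogous explicit check'' for each of the five cases of \Cref{fig:non_triv_bif}). That is the crux of the theorem and cannot be left as a promissory note, especially since in those cases the pairing, the Bruhat numbers and potentially $\O$ all move at once. The paper closes this case not by five separate computations but by two uniform observations drawn from \Cref{prop:self_int}: in every non-trivial case the pairing, viewed as a pairing of \emph{positions} in the linear order (which is all that $\O$ depends on), is unchanged --- only the points occupying those positions and the Bruhat numbers change --- hence $\O$ is unchanged; and the alternating product of Bruhat numbers is negated, which is what the torsion computation $\tau(f_{-1})=-\tau(f_1)$ in the proof of \Cref{prop:self_int} yields once one notes that the permutation of \Cref{con:perm} is also unaffected. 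Your closing ``shortcut'' via $\tau'=\tau\cdot(-1)^{\sigma+\O}$ is essentially this argument; had you carried it through rather than offering it as an alternative, the proof would be complete.
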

\begin{proof}
  The plan is to track down when the number $\tau'(f_t)$ changes its sign as $t$
  varies from $-1$ to 1. It suffices to prove that it does so exactly after $t$
  passes \begin{enumerate*}[label=\arabic*)] \item a self-intersection of the
    Cerf diagram,\item a negative cusp (either left or right)\end{enumerate*}.
  Note that by assumption there are no homological points, i.e. all of the
  points are paired.

  We will first sort out the second case. The number $\O$ doesn't change after a
  birth/death event, since the newborn/recently dead Barannikov pair doesn't
  overlap with any other pair. The statement now follows directly from
  \Cref{prop:birth_death}.

  We will now turn to the first case. Suppose that the bifurcation is
  non-trivial. Then the decomposition of critical points into pairs remains the
  same, thus the number $\O$ remains unaltered. It then follows directly from
  \Cref{prop:birth_death} that the alternating product of Bruhat numbers change
  its sign. Suppose now that the bifurcation is trivial. This time the set of
  Bruhat numbers remains the same, but the number $\O$ increases or decreases by
  one (depending on the justaposition of pairs involved in the bifurcation).
\end{proof}

Since $\tau(f, \F) = 1$ for any $f$ without critical points at all (and for any
$\F$), the \Cref{cor:akh_cor} follows straightforwardly by taking any $\F$ s.t.
$\ch \F \neq 2$.

\clearpage

\printbibliography           %

\end{document}